\documentclass[11pt]{amsart}

\usepackage{enumerate}

\RequirePackage[english]{babel}
\RequirePackage[numbers]{natbib}

\usepackage[
pdftex,hyperfootnotes]{hyperref}
\hypersetup{
 colorlinks=true,
 linkcolor=NavyBlue, 
 urlcolor=RoyalPurple,
 citecolor=OliveGreen,
 pdftitle={Finite Markov chains and multiple orthogonal polynomials},
 bookmarks=true,
 pdfpagemode=FullScreen,
}

\RequirePackage{amssymb,latexsym,amsmath,amsthm,bm}
\RequirePackage{mathtools,mathdots}
\mathtoolsset{showonlyrefs}

\RequirePackage[usenames,dvipsnames,svgnames,table,x11names]{xcolor}
\RequirePackage{pgfplots}
\RequirePackage{tikz}
\RequirePackage{tikz-3dplot}
\usetikzlibrary{automata,quotes, chains,matrix,calc,shadows,shapes.callouts,shapes.geometric,shapes.misc,positioning,patterns,decorations.shapes,
decorations.pathmorphing,decorations.markings,decorations.fractals,decorations.pathreplacing,shadings,fadings,arrows.meta,bending}
\tikzstyle{block} = [draw, rectangle,minimum height=3em, minimum width=2em]
\RequirePackage[renew-dots,renew-matrix]{nicematrix}

\RequirePackage[utf8]{inputenc}

\RequirePackage{Baskervaldx}
\RequirePackage[]{newtxmath}

\RequirePackage[textwidth=17.5cm,textheight=22.275cm,
height=24.275cm,
width=18cm]{geometry}

\theoremstyle{plain}
\newtheorem{teo}{Theorem}[section]
\newtheorem{coro}[teo]{Corollary}
\newtheorem{lemma}[teo]{Lemma}
\newtheorem{pro}[teo]{Proposition}

\newtheorem{defi}[teo]{Definition}

\theoremstyle{remark}
\newtheorem{rem}[teo]{Remark}
\newtheorem{exa}[teo]{Example}

\renewcommand{\d}{\operatorname{d}}
\newcommand{\Exp}[1]{\operatorname{e}^{#1}}

\newcommand{\diag}{\operatorname{diag}}

\newcommand{\C}{\mathbb{C}}

\newcommand{\N}{\mathbb{N}}
\newcommand{\R}{\mathbb{R}}

\newcommand*\pFqskip{8mu}
\catcode`,\active
\newcommand*\pFq{\begingroup
 \catcode`\,\active
 \def ,{\mskip\pFqskip\relax}%
 \dopFq
}
\catcode`\,12
\def\dopFq#1#2#3#4#5{%
 {}_{#1}F_{#2}\biggl[\genfrac..{0pt}{}{#3}{#4};#5\biggr]%
 \endgroup
}
\newcommand{\KF}[5]{F^{#1}_{#2}\left[{#3\atop #4}\Bigg\vert #5\right]}

\begin{document}
 
 \title[Finite Markov chains and multiple orthogonal polynomials]{Finite Markov chains
 \\
 and multiple orthogonal polynomials}
 \author[A Branquinho]{Amílcar Branquinho\( ^{1}\)}
 \address{\( ^1\)CMUC, Departamento de Matemática,
 Universidade de Coimbra, 3001-454 Coimbra, Portugal}
 \email{\( ^1\)ajplb@mat.uc.pt}
 
 \author[JEF Díaz]{Juan EF Díaz\( ^{2}\)}
 \address{\( ^2\)CIDMA, Departamento de Matemática, Universidade de Aveiro, 3810-193 Aveiro, Portugal}
 \email{\( ^2\)juan.enri@ua.pt}

 \author[A Foulquié]{Ana Foulquié-Moreno\( ^{3}\)}
 \address{\( ^3\)CIDMA, Departamento de Matemática, Universidade de Aveiro, 3810-193 Aveiro, Portugal}
 \email{\( ^3\)foulquie@ua.pt}

 \author[M Mañas]{Manuel Mañas\( ^{4}\)}
 \address{\( ^4\)Departamento de Física Teórica, Universidad Complutense de Madrid, Plaza Ciencias 1, 28040-Madrid, Spain}
 \email{\( ^4\)manuel.manas@ucm.es}
 
 \keywords{Multiple orthogonal polynomials, hypergeometric series, Hessenberg matrices, recursion matrix, Markov chains, stochastic matrices, classes, recurrence, stationary states, ergodicity, expected return times, Hahn, Laguerre, Meixner, Jacobi--Piñeiro, AT systems}
 
 \subjclass{42C05, 33C45, 33C47, 47B39, 47B36, 60J10, 60J22}

\begin{abstract}
 This paper investigates stochastic finite matrices and the corresponding finite Markov chains constructed using recurrence matrices for general families of orthogonal polynomials and multiple orthogonal polynomials. The paper explores the spectral theory of transition matrices, using both orthogonal and multiple orthogonal polynomials. Several properties are derived, including classes, periodicity, recurrence, stationary states, ergodicity, expected recurrence times, time-reversed chains, and reversibility. Furthermore, the paper uncovers factorization in terms of pure birth and pure death processes.
The case study focuses on hypergeometric representations of orthogonal polynomials, where all the computations can be carried out effectively. Particularly within the Askey scheme, all descendants under Hahn such as Hahn itself, Jacobi, Meixner, Kravchuk, Laguerre, Charlier, and Hermite, present interesting examples of recurrent reversible birth and death finite Markov chains. Additionally, the paper considers multiple orthogonal polynomials, including multiple Hahn, Jacobi--Piñeiro, Laguerre of the first kind, and Meixner of the second kind, along with their hypergeometric representations and derives the corresponding recurrent finite Markov chains and time-reversed chains.
A Mathematica code, publicly accessible in repositories, has been crafted to analyze various features within finite Markov chains.
\end{abstract}

\maketitle

\setcounter{tocdepth}{3}

\section{Introduction}
\allowdisplaybreaks

The interplay between orthogonal polynomials and stochastic processes has a long history. For example, Hermite polynomials have played a significant role in the theory of stochastic processes and integration with respect to the Wiener process \cite{Ito, wiener}.

In the 1950s, there were important advancements in understanding the connections between orthogonal polynomials and stochastic processes. Influential papers during this time focused on the spectral representation of probabilities in birth and death processes. Notable contributions include the works by Kendall, Ledermman, and Reuter \cite{Kendall, Kendall2, Ledermann}, as well as the seminal papers by Samuel Karlin and James McGregor \cite{KmcG1957-1, KmcG1957-2}. These papers delved into birth and death Markov processes, examining differential and classification aspects, and highlighting the integral representation of transient probability matrices that revealed the close relationship between birth and death processes and the theory of the Stieltjes moment problem. Karlin and McGregor also explored random walks, which are uncountable Markov chains \cite{KmcG}. Their work introduced the Karlin--McGregor representation formula, which provided an integral representation of relevant probabilistic quantities of stochastic processes in terms of orthogonal polynomials, and analyzed the recurrence and absorption characteristics of these processes. For further details, see \cite{mirta, Grunbaum1, Schoutens}.

In modern times, the concept of a ``random walk polynomial sequence'' has emerged. It refers to a polynomial sequence that is orthogonal with respect to a measure on \( [-1, 1] \), and satisfies a three-term recurrence relation with nonnegative coefficients. Any measure for which a random walk polynomial sequence is orthogonal is known as a random walk measure. Random walk polynomials have become a well-studied topic in the literature on orthogonal polynomials. Refer to \cite[Chapter \(4\)]{Ismail} for a recent account of some of the key~aspects.

A resurgence of interest in these ideas occurred in the 1970s and 1980s. The work by Whitehurst \cite{Whitehurst} explored simple random walks and their integral representation in terms of orthogonal polynomials and the support of the spectral measure of the transition matrix. Papers by Ogura \cite{Ogura} and Engel \cite{Engel} established an integral relation between the Poisson process and the discrete Charlier orthogonal polynomials. In a separate study \cite{Diaconis}, the Stein equations for well-known distributions, including Pearson's class, were connected with their corresponding orthogonal polynomials.

Orthogonal polynomials and the ideas put forth by Karlin and McGregor have found applications in queuing problems \cite{Charris, Chiara, Van Doorn0}. In \cite{Kijima}, the authors represented the conditional limiting distribution of a birth and death process using birth-death polynomials. Van Doorn and Schrinjer \cite{Van Doorn} studied random walk polynomials and random walk measures, which are relevant in the analysis of random walks. They provided properties of random walk measures and polynomials 
obtaining a limit theorem for random walk measures, which is significant in the study of random walks.
Additionally, in \cite{Coolen}, the same authors explored discrete-time birth-death processes or random walks and emphasized the role of orthogonal polynomials. They demonstrated how to determine it, if a given sequence of orthogonal polynomials is a sequence of random walk polynomials, as well as, whether a given random walk measure corresponds to a unique random walk. 

It is crucial to underscore that the aforementioned papers focused exclusively on countable simple homogeneous Markov chains, i.e. those of death and birth types with an infinite number of states. However, a significant gap exists in the literature, as we have not yet found a thorough discussion connecting finite Markov chains with orthogonal polynomials and their corresponding zeros.

In recent years, significant progress has been made in understanding the intricate relationship between probability and orthogonal polynomials. For instance, in \cite{Schoutens}, the Kravchuk polynomials were shown to play a crucial role in stochastic integration theory with respect to the binomial process. This study connected classical orthogonal polynomials with Stein's method for Pearson's and Ord's classes of distributions. The work also extended Karlin--McGregor's results by considering doubly limiting conditional distributions, providing a probabilistic interpretation for many orthogonal families in the Askey scheme.

In \cite{Kovchegov}, proposals that go beyond near neighbors were presented, focusing on studying the spectrum of a polynomial derived from a given transition matrix. On the other hand, \cite{Obata} reformulated the Karlin--McGregor formula in terms of one-mode interacting Fock spaces and provided an integral expression for the moments of an associated operator. This integral expression led to an extension of the Karlin--McGregor formula to the graph of paths connected with a clique.

Alberto Grünbaum and his collaborators have made significant contributions to the field, particularly in the study of generalized orthogonal polynomials for Markov chains beyond birth and death chains. In their work, Grünbaum proposed exploring matrix orthogonal polynomials to describe Markov chains with jumps that extend beyond nearest neighbors \cite{Grunbaum1, Grunbaum11, Grunbaum2, grunbaum, Grunbaum3}. This proposal, aligning with the ideas discussed in \cite{Kovchegov}, aims to broaden the scope of orthogonal polynomial theory to encompass a wider range of Markov chain dynamics. Matrix orthogonal polynomials are also relevant in the theory of integrable systems, as shown in \cite{AGMM} and related references. The important role of random matrix theory in integrable systems is well-known, and the intriguing interplay between orthogonal polynomials, integrable systems, and probability theory forms an interesting triangle of research.

Very recently, in \cite{delaIglesia}, de la Iglesia comprehensively covers various aspects of the spectral theory of Markov processes using orthogonal polynomials. The book specifically delves into birth and death processes, as well as diffusion processes.


In previous works, we have explored countable Markov chains beyond birth and death constructed from Jacobi--Piñeiro multiple orthogonal polynomials \cite{JP} and hypergeometric multiple orthogonal polynomials~\cite{Hypergeometric}. 
In a manner akin to the approach employed for random walk polynomials (as illustrated in \cite{Van Doorn0,Van Doorn}), we do it for multiple orthogonal polynomials.
The concept behind these works is somewhat similar to the one developed here, where we start with a nonnegative transition matrix, which is a Hessenberg matrix, and devise a procedure to obtain stochastic matrices from it. Subsequently, we derive the corresponding properties of the associated Markov chains. 
The approach presented here is, in a certain sense, the converse of the one developed by Karlin and McGregor \cite{KmcG}. In their work, the stochastic matrix is given, and the authors seek a spectral representation of that homogeneous Markov chain in terms of the corresponding measures and orthogonal polynomials. In contrast, here we are given the family orthogonality, and we aim to find the associated Markov chains and their most relevant properties.

In this paper we study the connection of finite homogeneous Markov chains with orthogonal polynomials sequences, the corresponding zeros and truncations of the recursion matrix. 
To achieve this goal, we expand on the ideas presented in the aforementioned papers. 
 In the context of a finite Markov chain with \( n\) states, we substitute the role of \( 1\) with the largest zero of the orthogonal polynomial \( P_n(x) \). 
We use the zeros of the orthogonal or multiple orthogonal polynomials and, in the case of multiple orthogonal polynomials, we 
use determinants of type I multiple orthogonal polynomials. The general theory presented here provides a step-by-step approach to construct stochastic matrices using sets of orthogonal or multiple orthogonal polynomials. These matrices are connected to birth and death Markov chains in the case of orthogonal polynomials and to Markov chains beyond birth and death for multiple orthogonal polynomials. We always consider homogeneous Markov chains \cite{Bremaud}. The Perron–Frobenius theorem for irreducible, nonsingular, nonnegative matrices is a crucial tool in these constructing of Markov chains.

We investigate various properties of these finite Markov chains. These properties include the recurrence or transience of a state, its periodicity, ergodicity, stationary states, expected return times, reversibility and reversal chains. Furthermore, we describe a procedure that enables us to factor the stochastic matrix into bidiagonal stochastic matrices, each modeling a pure birth or pure death Markov chain. This is especially relevant concerning the construction of corresponding urn models \cite{Grunbaum2,grunbaum}. Bidiagonal stochastic matrices give rise to either pure birth or pure death chains. The bidiagonal stochastic factorization represents the Markov chain as a
composition of the most elementary chains—pure birth or pure death chains. In this formulation, each transition within the Markov chain corresponds to a series of consecutive transitions governed by these fundamental pure birth or pure death chains.

This general construction is then applied to families of orthogonal and multiple orthogonal polynomials in the Askey scheme that have a nonnegative recursion matrix.
This allows us to apply all the elements of the general theory, as we have explicit hypergeometric expressions at our disposal. These expressions lead to concrete numerical examples once the zeros of certain polynomials are numerically determined. 
The families of orthogonal polynomials in the Askey scheme include Hahn, Jacobi, Meixner, Kravchuk, Laguerre, Charlier, and Hermite. The Bessel family is excluded since the normalized Jacobi matrix has its extreme diagonal negative. Additionally, in the multiple Askey scheme for multiple orthogonal polynomials, we have multiple Hahn, Jacobi--Piñeiro, multiple Meixner of the second kind, and multiple Laguerre of the first kind.
We have provided two Mathematica codes available in public repositories that, given a set of parameters, generate various probabilistic elements for these Markov chains.


The paper follows the layout outlined below. In the introduction, we provide a concise overview of Markov chains and the Perron--Frobenius theorem in matrix theory. In Section 2, we develop the theory for orthogonal polynomials and the Jacobi matrix.
The primary distinction from previous 
studies lies in our focus on finite Markov chains rather than countable ones.
Consequently, the role of the point \( 1\) needs to be replaced by an appropriate zero of an orthogonal polynomial in the sequence. Proposition \ref{propestocasticaescalar} presents the construction scheme of stochastic matrices linked with orthogonal polynomials. In Proposition \ref{pro:reversibility} we prove that the Markov chain is reversible.
By applying the spectral properties of the Jacobi matrix, we characterize the corresponding birth and death Markov chains.

Moving on to Section 3, we apply these findings to Hahn, Jacobi, Meixner, Kravchuk, Laguerre, Charlier, and Hermite polynomials, which are all descendants of the Hahn polynomials in the Askey scheme. It is worth noting that all of these polynomials have a nonnegative recurrence matrix.

In Section 4, we proceed with the construction scheme for multiple orthogonal polynomials with respect to an algebraic Chebyshev (AT) system of two weights in the stepline, along with the corresponding two types of multiple orthogonal polynomials. These polynomials have nonnegative recurrence matrices and are associated with finite non simple Markov chains; i.e., chains beyond birth and death. Theorems~\ref{propestocasticaII} and~\ref{propestocasticaI} provide the procedures for constructing tetradiagonal stochastic matrices.
In this section, applying the spectral properties of the truncated recurrence Hessenberg matrix, we present the main properties of these associated stochastic processes.

Finally, in Section 5, we explore the four possible cases within the multiple Askey scheme that lead to stochastic matrices. We use the available hypergeometric expressions to find explicit numerical examples of finite Markov chains. 

We have uploaded two Mathematica notebooks to both the Mathematica Notebook Archive and GitHub. These notebooks allow the interested reader to compute their own stochastic matrices by choosing different sets of parameters for each family. Additionally, the notebooks provide the corresponding steady states, expected return times, and the pure birth/pure death stochastic factorization.

\subsection{Elements of Markov chains}
Let us start with a concise overview of fundamental concepts related to Markov chains \cite{gallager,feller}, which will be used throughout this study. A countable Markov chain is an integer-time process represented by a sequence of random variables \( \lbrace X_n\rbrace_{n\in\mathbb N_0} \), where the probability of each event depends solely on the state reached in the preceding event. In this paper, we focus exclusively on finite Markov chains, where the random variables \( \lbrace X_n\rbrace_{n\in\mathbb N_0}\) take values from a finite support \( \lbrace{1,\ldots,m}\rbrace\subset\mathbb N_0 \). Each element in this finite support is referred to as a ``state.'' For a discussion on semi-infinite Markov chains, please refer to \cite{JP}.
 
The conditional probabilities, denoted by
\( P_{i,j}\coloneq \Pr(X_{n+1}=j|X_{n}=i) \), 
for states \( i,j\in\{1,\ldots,m\} \), form an \( m\times m\) stochastic matrix \( P\coloneq \left[ \begin{matrix} P_{i,j} \end{matrix} \right]_{i,j=1}^m \). This matrix satisfies the following properties:
\begin{align}
 \label{nonegatividad}
 P_{i,j}&\geq0, \\
 \label{estocasticidad}
 \sum_{k=1}^m P_{i,k}&=1, 
\end{align}
for \( i,j\in\{1,\ldots,m\} \). 

The first condition defines a nonnegative matrix, while the second condition states that
\begin{align*} 
 P e&=e, & e&\coloneqq
 \begin{bNiceMatrix}
 1 &\Cdots &1
 \end{bNiceMatrix}^\top\in\R^m.
\end{align*}
According to the Chapman--Kolmogorov equation, the probabilities of transitioning from one state to another after \( r\) transitions are given by the entries of the \( r\)-th power of \( P \). In other words, 
\begin{align*}
 \Pr(X_{n+r}=j|X_{n}=i)&=(P^r)_{i,j}.
\end{align*}
Using this result, we can define the period of a state.
\begin{defi}[Period and aperiodic states]
 \label{defperiodo}
 Let \( i\in\{1,\ldots,m\}\) be a state of a Markov chain. The {period} \( d(i)\) of state \( i\) is defined as the greatest common divisor of all the natural numbers \( r\in\mathbb N\) such that \( (P^r)_{i,i}>0 \). If \( d(i)=1 \), the state \( i\) is said to be {aperiodic}.
\end{defi}
\begin{rem}
 \label{condicionsuficienteaperiodicidad}
 Note that if \( P_{i,i}>0 \), then state \( i\) is aperiodic, since the greatest common divisor of a set of numbers that includes \( 1\) must be \( 1 \). 
\end{rem}

\begin{defi}[First-passage-time probability]
 The {first-passage-time probability} is defined as
 \begin{align*}
 f^r_{i,j}=\operatorname {Pr} (X_{r}=j,X_{r-1}\neq j,\ldots ,X_{1}\neq j|X_{0}=i),
 \end{align*}
 for states \( i,j\in\{1,\ldots,m\}\) and number of transitions \( r\in\N_0 \), which represents the probability of transitioning from state \( i\) to state \( j\) for the first time after \( r\) transitions.
\end{defi}

The corresponding generating functions are given by
\begin{align*}
 P_{i,j}(s)&\coloneq \sum_{r=0}^\infty(P^r)_{i,j}s^r, && F_{i,j}(s) \coloneq \sum_{r=1}^\infty f^r_{i,j}s^r,
\end{align*}
 for states \( i,j\in\{1,\ldots,m\} \), and they satisfy the relation
\begin{align*}
 F_{i,i}(s)=1-\dfrac{1}{P_{i,i}(s)}.
\end{align*}
Using this, we can provide the following definitions:
\begin{defi}[Recurrence and transience]
 \label{defrecurrente}
 A state \( i\) of a Markov chain is called {recurrent} if the probability of returning to state \( i\) is \( 1 \), or equivalently, if
 \begin{align*}
 \lim_{s\rightarrow1^{-}}F_{i,i}(s)=1.
 \end{align*}
 If the probability does not reach \( 1 \), the state \( i\) is said to be {transient}. If all states are recurrent, the chain is referred to as a recurrent chain; otherwise, it is classified as a transient chain.
\end{defi}
\begin{rem}
 \label{condicionsuficienteparalarecurrencia}
 From the previous expressions, it can be observed that state \( i\) is recurrent if and only if \( \lim\limits_{s\rightarrow1^{-}}P_{i,i}(s)\) diverges. 
\end{rem}
\begin{defi}[Ergodicity]
 \label{defergodico}
 A state \( i\) of a Markov chain is called {ergodic} if it is both aperiodic and recurrent. If all states are ergodic, the chain itself is referred to as an ergodic chain.
\end{defi}

\begin{defi}[Classes of states]
 We say that state \( j\) is {accessible} from state \( i\) (\( i\to j\)) if there exists a ``path'' from \( i\) to \( j\); i.e., if \( (P^n)_{i,j}>0\) for some \( n\in\N \). Two states {communicate} if \( i\to j\) and \( j\to i \), meaning that there exists a pair of natural numbers \( n\) and \( m\) such that \( (P^n)_{i,j}>0\) and \( (P^m)_{j,i}>0 \). 
 Two states \( i\) and \( j\) are said to be in the same {class} if they communicate. 
\end{defi}

Then, for two states \( i\) and \( j\) in the same class, starting from state \( i \), state \( j\) can be reached after a finite number of transitions, and vice versa. 

\begin{teo}[Class properties]
 The states in the same class are all recurrent or all transient and have the same period. 
\end{teo}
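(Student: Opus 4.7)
The plan is to reduce both assertions to a single Chapman--Kolmogorov estimate. Since $i$ and $j$ belong to the same class, I can choose $n,m\in\N$ with
\begin{align*}
\alpha\coloneq (P^n)_{i,j}(P^m)_{j,i}>0,
\end{align*}
and then, using nonnegativity of $P$, derive the master inequality
\begin{align*}
(P^{n+r+m})_{i,i}\geq (P^n)_{i,j}(P^r)_{j,j}(P^m)_{j,i}=\alpha(P^r)_{j,j},\qquad r\in\N_0.
\end{align*}
Everything below is elementary bookkeeping on top of this single inequality.

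For the recurrence/transience dichotomy, I would invoke Remark \ref{condicionsuficienteparalarecurrencia}, which identifies recurrence of $i$ with divergence of $\sum_{r\geq 0}(P^r)_{i,i}$. Summing the master inequality in $r$, its left-hand side is a sub-series of $\sum_{r\geq 0}(P^r)_{i,i}$ and its right-hand side equals $\alpha\sum_{r\geq 0}(P^r)_{j,j}$, so divergence propagates from $j$ to $i$. Swapping the roles of $i$ and $j$ gives the converse, and the contrapositive delivers the same dichotomy for transience.

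For the periodicity assertion, I would first apply the master inequality at $r=0$ to conclude $(P^{n+m})_{i,i}\geq\alpha>0$, whence $d(i)\mid n+m$ by Definition \ref{defperiodo}. Next, for any $r$ with $(P^r)_{j,j}>0$, the same inequality gives $(P^{n+r+m})_{i,i}>0$, so $d(i)\mid n+r+m$; combined with $d(i)\mid n+m$, this forces $d(i)\mid r$. Since $d(j)$ is by definition the greatest common divisor of all such $r$, I conclude $d(i)\mid d(j)$, and interchanging $i$ and $j$ gives equality.

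The step I expect to demand the most care is the periodicity argument: one must not try to deduce $d(i)\mid r$ from $(P^{n+r+m})_{i,i}>0$ alone, but rather combine it with the baseline divisibility $d(i)\mid n+m$ coming from the round trip $i\to j\to i$. Once this is noted, the argument uses no ingredient beyond nonnegativity of $P$, the Chapman--Kolmogorov inequality, and the definitions of period and recurrence.
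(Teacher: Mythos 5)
Your proof is correct: the master inequality $(P^{n+r+m})_{i,i}\geq (P^n)_{i,j}(P^r)_{j,j}(P^m)_{j,i}$ is exactly the right tool, the summation argument for recurrence (via the divergence criterion of Remark \ref{condicionsuficienteparalarecurrencia}) is sound, and the two-step divisibility argument for the period — first $d(i)\mid n+m$, then $d(i)\mid n+r+m$ for every $r$ with $(P^r)_{j,j}>0$, hence $d(i)\mid d(j)$ and equality by symmetry — is the standard and correct way to avoid the trap you flag. Note that the paper does not prove this theorem at all: it is quoted in the introductory overview as a classical fact from the Markov chain literature (e.g. the cited texts of Brémaud, Gallager, Feller), so there is no proof to compare against; your argument is precisely the textbook one, and the only detail left tacit is that communication of $i$ and $j$ guarantees the set defining $d(j)$ is nonempty, which follows from the same round-trip inequality.
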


\begin{defi}[Irreducible Markov chain]
If there is only one class, we say that the Markov chain is irreducible. 
\end{defi}

In the examples we discuss, there will be only one class of states, making them irreducible Markov chains. 

\begin{defi}[Expected return times]
 Let us define the first time to get to state \( j\) from state \( i\) as the random quantity
 \begin{align*}
 T _{i,j}\coloneq \min\{n\geq 1:X_{n}=j|X_{0}=i\}.
 \end{align*}
 The ``first time passage'' from state \( i\) to state \( j\) is the mean or expectation of \( T_{i,j}\); i.e,
 \begin{align*}
 \bar t _{i,j}\coloneq \operatorname {E}(T _{i,j})=\sum_{n=1}^{\infty }n
 f^n_{i,j}.
 \end{align*}
Specifically, when \( i=j \), we write \( \bar t_i\) instead of \( \bar t_{i,i}\) and refer to it as the expected (or mean) return time of state \( i \). 
\end{defi}
This expectation \( \bar t_i\) represents the expected number of steps it takes for the chain to return to the recurrent state \( i \). 

\begin{defi}[Probability vectors]
A vector \( \pi = \begin{bNiceMatrix}
 \pi_1 & \Cdots & \pi_m
\end{bNiceMatrix} \), where \( \pi_i \geq 0\) for states \( i \in {1, \ldots, m}\) and \( \pi e = \sum_{i=1}^m \pi_i = 1 \), is referred to as a probability vector. This vector encodes a probability distribution within the Markov chain, where \( \pi_i\) represents the probability of being in state \( i \). 
\end{defi}

Assuming a probability vector \( {\pi}(0)\) as the initial state, after one transition, the new probability of being in state \( k\) is given by \( \pi_k(1)=\sum_{i=0}^\infty\pi_i(0)P_{i,k} \). 
Thus, the new probability vector will be
\( \pi(1)=\pi(0)P \). Note that \( \pi(1) e=\pi(0)P e=\pi(0) e=1 \). 
Therefore, after \( n\) transitions, the probability vector will be \( {\pi}(n)={\pi}(0 )P^n \), 
which follows from the Chapman--Kolmogorov equation \( P^{n+m}_{i,j}=\sum_{k=0}^\infty P^{n}_{i,k}P^{m}_{k,j} \). 

\begin{defi}
 A steady (or stationary) state is an invariant probability vector \( {\pi} \), meaning that it does not change over time, and satisfies \( {\pi}= \pi P \). 
\end{defi}
\begin{rem}
 In terms of entries we find \( \pi_i=\sum_{j=1}^{m}\pi_j P_{j,i} \). These are the balance equations.
\end{rem}

\begin{teo}
\begin{enumerate}[\rm i)]
 \item For an irreducible Markov chain, if there exists a steady state, it is unique and
 the Markov chain is recurrent.
 \item If the Markov chain is recurrent, there exists a unique steady state. In this case, the steady state entries are \( \pi_i=\frac{1}{\bar t_i}\) in terms of the expected return times \( \bar t_i \), for states \( i\in\{1,\ldots,m\} \). 
\item For ergodic Markov chains, we have the limit property
\begin{align}\label{eq:limit}
 \lim_{r\to\infty}P^r_{i,j}=\pi_j.
\end{align}
\end{enumerate}
\end{teo}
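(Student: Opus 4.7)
The plan is to anchor everything on the Perron--Frobenius theorem for irreducible nonnegative matrices (already flagged in the introduction as the key tool), combined with a direct renewal-type construction of the invariant measure.

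For part i), I would start from $Pe = e$, so $1$ is an eigenvalue of $P$ with right eigenvector $e$, and since $P$ is stochastic its spectral radius equals $1$. For an irreducible nonnegative matrix, Perron--Frobenius guarantees that the spectral radius is a simple eigenvalue; hence the left $1$-eigenspace is one dimensional, and any steady state is unique once normalized by $\pi e = 1$. For recurrence, one argues that in a finite chain some state must be recurrent: if every state were transient, then $\sum_r (P^r)_{i,j}<\infty$ for each $j$, so the finite sum over $j$ would also be finite, contradicting $\sum_j (P^r)_{i,j}=1$ (which forces $\sum_r \sum_j (P^r)_{i,j}=\infty$). Irreducibility together with the Class Properties theorem already cited then propagates recurrence to every state.

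For part ii), given recurrence I would construct a steady state by the classical cycle argument. Fix a reference state $i$ and set
\[
\nu_j \coloneq \operatorname{E}\!\left(\sum_{n=0}^{T_{i,i}-1} \mathbf{1}_{\{X_n = j\}} \,\Big|\, X_0 = i\right),
\]
the expected number of visits to $j$ during an excursion from $i$ back to $i$. A first-step decomposition together with the strong Markov property yields $\nu P = \nu$ and $\nu_i = 1$, while $\sum_j \nu_j = \bar t_i < \infty$ because this sum simply counts the length of the excursion. Normalizing produces the probability vector $\pi = \nu / \bar t_i$, so in particular $\pi_i = 1/\bar t_i$; since $i$ was arbitrary the identity holds for every state, and uniqueness was already established in part i).

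For part iii), assuming ergodicity (irreducible plus aperiodic, equivalently primitive), the strengthened Perron--Frobenius theorem for primitive matrices yields that $1$ is the unique eigenvalue on the unit circle and every other eigenvalue lies strictly inside the open unit disk. Using the spectral projection onto the $1$-eigenspace one writes $P = e\pi + R$ with $\rho(R)<1$, $Re = 0$ and $\pi R = 0$, hence $P^r = e\pi + R^r$ with $R^r \to 0$, giving $(P^r)_{i,j}\to \pi_j$. I expect the main obstacle to be the equivalence between aperiodicity of all states and primitivity of $P$ (needed to invoke the strengthened Perron--Frobenius); this is classical but is the one nontrivial link in the chain of reasoning. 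A secondary subtlety is justifying the first-step decomposition inside the cycle construction, but this is standard once the finiteness of $\bar t_i$ in the finite-state recurrent setting is in hand.
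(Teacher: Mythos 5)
This theorem is stated in the paper's introductory overview of Markov chains as standard background, with references to the textbook literature (Gallager, Feller, Brémaud); the paper itself supplies no proof, so there is nothing to compare against line by line. On its own merits your sketch is correct and follows the classical route: Perron--Frobenius simplicity of the eigenvalue \( 1 \) for uniqueness, the impossibility of an all-transient finite chain plus the class-properties theorem for recurrence, the excursion (cycle) construction of the invariant measure for \( \pi_i = 1/\bar t_i \), and the primitive-matrix form of Perron--Frobenius with the rank-one splitting \( P = e\pi + R \), \( \rho(R)<1 \), for the convergence of \( P^r \). The two points you flag yourself are indeed the only places where detail is owed: (a) the equivalence of aperiodicity of an irreducible chain with primitivity of \( P \), and (b) the finiteness of \( \bar t_i \) in the finite recurrent setting, which is most cleanly obtained from the invariance relation itself (from \( \nu P^n = \nu \) and \( \nu_i = 1 \) one gets \( \nu_j \leq 1/(P^n)_{j,i} \) for any \( n \) with \( (P^n)_{j,i}>0 \), whence \( \sum_j \nu_j < \infty \) over the finite state space). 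With those two classical lemmas supplied, the argument is complete and consistent with the framework the paper relies on (it even invokes the same Perron--Frobenius theorem the paper uses elsewhere).
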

\begin{rem}
 Note that for any probability vector \( \pi(0)\) of a Markov chain satisfying property \eqref{eq:limit}, the long-term evolution will be
 \begin{align*}
 \pi_j(\infty)=\lim_{r\to\infty} \sum_{i=1}^m\pi_i(0)P^r_{i,j} =\sum_{i=1}^m\pi_i(0)\pi_j=\pi_j.
 \end{align*}
 In other words, the steady state is an equilibrium state that all probability vectors tend to as time goes to infinity.
\end{rem}

\begin{rem}
 Simple random walks can be identified with birth and death Markov chains where the transition matrix is tridiagonal. Starting from a given state \( i \), the only possible transitions are to remain in the same state or to move to the neighboring states \( i+1\) or \( i-1 \). 
\end{rem}

\begin{rem}[Time reversal]
Assuming the steady state satisfies \( \pi_i>0\) for \( i\in\{1,\ldots,m\} \), we can define a matrix \( Q\) with entries \( Q_{i,j}=\frac{\pi_j}{\pi_i}P_{j,i} \). 

The matrix \( Q\) is stochastic and, using Bayes' retrodiction formula with the initial distribution \( \pi \), it can be written as \( Q_{i,j}=\operatorname{Pr}\left(X_n=j|X_{n+1}=i\right) \). Hence, this matrix serves as the transition matrix of the initial Markov chain when time is reversed. Moreover, if there exists a stochastic matrix \( Q\) satisfying detailed balance equations \( \pi_iQ_{i,j}=\pi_j P_{j,i}\) for a probability distribution \( \pi \), then \( \pi\) represents a steady state. A Markov chain is considered reversible when \( Q=P \), meaning that the detailed balance \( \pi_iP_{i,j}=\pi_j P_{j,i}\) is satisfied. In other words, the Markov chain and its time-reversed chain are statistically the same chain,~\cite{Bremaud,Haggstrom}.
\end{rem}

\begin{rem}
The product \( PQ\) of two stochastic matrices, denoted as \( P\) and \( Q \), retains the property of being stochastic. Consequently, \( PQ\) serves as a transition matrix for a distinct Markov chain. Given an initial probability vector \( \pi\) representing a probability distribution, the resulting vector \( \pi PQ\) signifies a two-step transition. Initially, there is a transition from \( \pi\) to \( \pi P = \pi' \), representing a Markov chain modeled by \( P \). Subsequently, a second transition occurs from \( \pi'\) to \( \pi' Q \), modeled by \( Q \). This two-step process embodies the evolution of the Markov chain under the combined influence of both matrices \( P\) and \( Q \). 
 \end{rem}

\subsection{Some facts on matrix analysis}

For what follows, we now recall some relevant facts from matrix analysis regarding positivity and total positivity.

\subsubsection{Positivity}
A matrix is said positive if all its entries are positive and nonnegative if all its entries are nonnegative. 
\begin{defi}[Irreducible matrices]
 A matrix \( M\) is called {irreducible} if it cannot be written in the form
 \begin{align*}
 P
 \, 
 \begin{bNiceArray}{cw{c}{2cm}c|w{c}{1cm}c}[margin] \Block{3-3}<\Large >{A} & & & \Block{3-2}<>{B} & \\
 & & & & \\
 &&&&\\
 \hline
 \Block{2-3}<>{0} & & & \Block{2-2}<>{C} & \\
 &&&&
 \end{bNiceArray}P^{-1} ,
\end{align*}
 where \( A \) and \( C \) are nontrivial square matrices (that is, with a size greater than zero), \( B \) is a rectangular matrix, and~\( P \) is a permutation matrix (a matrix with exactly one nonzero entry of value \( 1 \) in each row and column).
\end{defi}

The transition matrix of an irreducible Markov chain is itself an irreducible matrix.
\begin{defi}[Spectral radius]
 \label{defradioespectral}
 For a matrix \( M\in \R^{m\times m}\) with spectrum (i.e., the set of its eigenvalues) \( \sigma(M) \), the {spectral radius} is defined as
\( \rho(M)\coloneq \sup_{\lambda\in\sigma(M)}|\lambda| \). 
\end{defi}

Now, we recall the Perron--Frobenius theorem that will be instrumental in what follows, we refer the reader to \cite{nnm}.
\begin{teo}[Perron–Frobenius theorem for irreducible nonnegative matrices]
 \label{PerronFrobenius}
 Let \( M\in \R^{m\times m}\) be an irreducible nonnegative 
 matrix with spectral radius 
 \( \rho (M)=r \). Then, the following hold:
 \begin{enumerate}[\rm i)]
 \item 
 The spectral radius is positive, \( r>0 \), and \( r\) is a simple eigenvalue of the matrix \( M \). 
 \item Both right and left eigenspaces corresponding to \( r\)
 are one-dimensional.
 \item The right and left eigenvectors corresponding to \( r\) have
 all their components of the same sign, which can be chosen to be positive. 
 \item Moreover, these are the only eigenvectors whose components can be all positive.
 \end{enumerate}
\end{teo}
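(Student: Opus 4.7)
The plan is to follow the classical Collatz--Wielandt route, leaning on the elementary observation that an irreducible nonnegative matrix $M\in\R^{m\times m}$ satisfies $(I+M)^{m-1}>0$ entrywise; this holds because irreducibility ensures that for every pair $(i,j)$ some power $(M^k)_{i,j}$ is positive, and expanding the binomial gathers these contributions. This strict-positivity substitute lets most of Perron's original reasoning for strictly positive matrices carry over.

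First I would establish existence of a positive eigenpair at the spectral radius. Consider $\varphi(x)=\min_{i:\,x_i>0}(Mx)_i/x_i$ on the simplex $\Delta=\{x\in\R^m_{\geq 0}:\sum_i x_i=1\}$ and set $r^{\star}=\sup_{x\in\Delta}\varphi(x)$. A compactness plus upper-semicontinuity argument yields a maximizer $x^{\star}$; replacing $x^{\star}$ by a suitable renormalization of $(I+M)^{m-1}x^{\star}$ does not decrease $\varphi$ and produces a strictly positive maximizer, after which one checks that $Mx^{\star}=r^{\star}x^{\star}$, since otherwise applying $(I+M)^{m-1}$ to the nonnegative, nonzero slack $Mx^{\star}-r^{\star}x^{\star}$ would strictly increase $\varphi$. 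Irreducibility rules out $r^{\star}=0$. A standard Collatz--Wielandt estimate, based on the triangle inequality $|\lambda|\,|z_i|\leq (M|z|)_i$ applied to any eigenpair $(\lambda,z)$, then gives $|\lambda|\leq r^{\star}$, so $r^{\star}=\rho(M)=r$. The analogous argument applied to $M^{\top}$ produces a positive left eigenvector $y^{\star}$ at $r$.

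Next I would address items (ii)--(iv). For geometric simplicity, suppose $Mu=ru$ with $u$ real; choose $t\in\R$ such that $v=x^{\star}-tu\geq 0$ has at least one zero entry. Then $Mv=rv$ and $(I+M)^{m-1}v=(1+r)^{m-1}v$, yet the left-hand side is entrywise strictly positive when $v\neq 0$ while the right-hand side inherits the zero entry, forcing $v=0$ and hence $u\in\R x^{\star}$. Complex eigenvectors at $r$ reduce to this case by separating real and imaginary parts, and the left eigenspace is handled identically via $M^{\top}$. Finally, if $Mv=\mu v$ with $v\geq 0$ and $v\neq 0$, pairing with $y^{\star}$ gives $(\mu-r)\,y^{\star\top}v=0$; since $y^{\star\top}v>0$, we obtain $\mu=r$, and then by the preceding one-dimensionality $v$ is a positive multiple of $x^{\star}$, establishing~(iv).

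The main obstacle I anticipate is the algebraic simplicity of $r$, meaning that $r$ is a simple root of the characteristic polynomial $p(\lambda)=\det(\lambda I-M)$, which is strictly stronger than the geometric simplicity already obtained. I would prove $p'(r)\neq 0$ by using the identity $p'(\lambda)=\sum_{i=1}^{m}\det(\lambda I-M_{[i]})$, where $M_{[i]}$ is the principal submatrix of $M$ obtained by removing row and column $i$. Each $M_{[i]}$ is nonnegative, and irreducibility of $M$ forces the strict inequality $\rho(M_{[i]})<r$; this is the delicate ingredient and can be verified by extending a Perron eigenvector of $M_{[i]}$ by zero in coordinate $i$, comparing with $Mx$, and pairing with $y^{\star}$, using that at least one off-diagonal entry $M_{i,k}$ is nonzero by irreducibility. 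Once this strict monotonicity is in hand, each $\det(rI-M_{[i]})$ inherits the sign of its leading term and is therefore strictly positive, so $p'(r)>0$ and the proof is complete.
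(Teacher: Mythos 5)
The paper does not prove this theorem: it is quoted as a classical result with a reference to the literature on nonnegative matrices (Bapat--Raghavan), so there is no in-paper argument to compare against. Your proposal is the standard Wielandt proof via the Collatz--Wielandt function together with the positivity of \( (I+M)^{m-1} \), and as a sketch it is essentially correct: the extremal characterization of \( r \), the slack argument showing the maximizer is an eigenvector, the one-dimensionality via \( v=x^{\star}-tu \), the pairing with the left Perron vector for (iv), and the algebraic simplicity via \( p'(\lambda)=\sum_i\det(\lambda I-M_{[i]}) \) with \( \rho(M_{[i]})<r \) are all the right ingredients. One small point to tighten: \( \varphi \) is in general \emph{not} upper semicontinuous on the boundary of the simplex (the index set over which the minimum is taken can shrink there, making \( \varphi \) jump up), so the existence of a maximizer should not be claimed directly on \( \Delta \); the correct move, which you essentially already invoke, is to observe \( \varphi\bigl((I+M)^{m-1}x\bigr)\geq\varphi(x) \) and take the supremum over the compact set of normalized vectors \( (I+M)^{m-1}x \), \( x\in\Delta \), on which everything is strictly positive and \( \varphi \) is continuous. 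Also, in the algebraic-simplicity step, note that \( M_{[i]} \) need not be irreducible, so the existence of a nonnegative eigenvector at \( \rho(M_{[i]}) \) requires the (standard) extension of the existence statement to general nonnegative matrices, e.g.\ by a perturbation or limiting argument; once that is granted, your zero-extension and pairing with \( y^{\star} \) does yield the strict inequality and hence \( p'(r)>0 \).
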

 The spectral radius is called the Perron–Frobenius eigenvalue.

\subsubsection{Total positivity}
Totally nonnegative matrices are those with all their minors nonnegative
\cite{Fallat-Johnson,Gantmacher-Krein}
and the set of nonsingular TN matrices are denoted by InTN. 
A matrix is oscillatory 
\cite{Gantmacher-Krein} 
if it is totally nonnegative, irreducible 
and nonsingular \cite{Fallat-Johnson}.
Notice that the set of oscillatory matrices is denoted by IITN (irreducible invertible totally nonnegative) in \cite{Fallat-Johnson}. An oscillatory matrix~\( A\) is equivalently defined as a totally non negative matrix \( A\) such that for some \( n\) we have that \( A^n\) is totally positive (all minors are positive). From Cauchy--Binet Theorem one can deduce the invariance of these sets of matrices under the usual matrix product. Thus, following \cite[Theorem 1.1.2]{Fallat-Johnson} the product of matrices in InTN is again InTN.

The total positivity of matrices is related to orthogonal polynomials and birth-death processes; see, for example, \cite{Viennot} and \cite{Sokal}. It is important to note that this topic has a direct connection to calculating weighted paths, which is a part of enumerative combinatorics. Recently, substantial progress has been made in understanding the connections between multiple orthogonal polynomials and total positivity. In \cite{stbbm0}, bounded oscillatory banded Hessenberg matrices admitting a positive bidiagonal factorization were connected with multiple orthogonal polynomials through a Favard spectral theorem and a Gauss quadrature. The corresponding Karlin–McGregor representation was described for Markov chains composed of pure death and pure birth chains. Then, in \cite{stbbm,stbbm-nuevo}, the discussion was extended to the bounded banded case and mixed multiple orthogonal polynomials.

\section{Orthogonal polynomials and birth and death Markov chains}

We are now ready to connect all these concepts with orthogonal polynomial theory. Let's start with some basic definitions.

\subsection{Orthogonal polynomials and Jacobi matrices}
A sequence of monic polynomials \( \left\lbrace p_n(x)\right\rbrace_{n=0}^N \), with \( \deg p_n =n\) and \( N\in\mathbb N_0\) or \( N=\infty \), is said to be orthogonal with respect to a weight function \( w:\Delta\subseteq\R\to\R_{\geq 0}\) if it satisfies
\begin{align*}
 \int_{\Delta}x^jp_n(x)w(x)\d x&=0, & j&\in\{0,\ldots ,n-1\} .
\end{align*}
Similarly, the sequence satisfies discrete orthogonality with respect to a weight function \( w:\Delta\subseteq\mathbb Z\rightarrow\R_{\geq 0}\) if it satisfies
\begin{align*}
 \sum_{k\in\Delta}k^jp_n(k)w(k)&=0, && j\in\{0,\ldots ,n-1\} .
\end{align*}
These orthogonality relations can be equivalently expressed as
\begin{align*}
 \int_{\Delta}p_n(x)p_m(x)w(x)\d x&=\delta_{n,m}h_n, & \sum_{k\in\Delta}p_n(k)p_m(k)w(k)=h_n\delta_{n,m},
\end{align*}
respectively. Here, \( h_n>0\) represents the squared \( L^2\) norm of the polynomial \( p_n \). 

Both continuous and discrete orthogonal polynomials satisfy a three-term recurrence relation of the form 
\begin{align}
 \label{recurrenciaescalar}
	 xp_n(x)&=p_{n+1}(x)+b_n p_n(x)+c_n p_{n-1}, & n&\in \{0,\ldots,N-1\} ,
\end{align}
with \( p_{-1}=0 \).
For the applications we will require \( c_n > 0\) and \( b_n \geq 0 \), to ensure that the matrix is nonnegative. 
It is worth noting that
\( c_n=\dfrac{h_{n+1}}{h_{n}} \).
In matrix form, this three-term recurrence relation can be written as
\begin{align*}
\begin{aligned}
 J \begin{bNiceMatrix}
 p_0(x)\\ p_1(x)\\ p_2(x)\\ \Vdots
 \end{bNiceMatrix}
 &=x\begin{bNiceMatrix}
 p_0(x)\\ p_1(x)\\ p_2(x)\\ \Vdots
 \end{bNiceMatrix}, 
 & 
 J & \coloneq 
 \begin{bNiceMatrix}
 b_0 & 1 & 0 & \Cdots[shorten-end=6pt]&\\
 c_1 & b_1 & 1 & \Ddots[shorten-end=8pt]&\\
 0 & c_2 & b_2 & \Ddots[shorten-end=8pt]&\\
 \Vdots[shorten-start=5pt,shorten-end=2pt]& \Ddots[shorten-end=-2pt] & \Ddots[shorten-end=-2pt] &\Ddots[shorten-end=7pt] &
 \end{bNiceMatrix},
\end{aligned}
\end{align*}
if the sequence is infinite. If the sequence is finite or we want to truncate it, we have
\begin{align}\label{eq:specralJ_dm}
 J_m
 \begin{bNiceMatrix}
 p_0(x)\\ p_1(x)\\ \Vdots\ \\\\ p_{m-1}(x)
 \end{bNiceMatrix}
 &=x\begin{bNiceMatrix}
 p_0(x)\\ p_1(x)\\ \Vdots\ \\\\ p_{m-1}(x)
 \end{bNiceMatrix}
 -\begin{bNiceMatrix}
 0\\ \Vdots\\ \\ 0\\ p_{m}(x)
 \end{bNiceMatrix}, 
\end{align}
where 
 \begin{align}
 J_m&\coloneq \begin{bNiceMatrix}[columns-width=.65cm]
 b_0 & 1 & 0 & \Cdots & & 0\\
 c_1 & b_1 & 1 & \Ddots & & \Vdots\\
 0 & c_2 & b_2 & \Ddots & & \\
 \Vdots & \Ddots[shorten-end=7pt]& \Ddots[shorten-end=4pt]&\Ddots[shorten-end=2pt] & & 0\\[4pt]
 & & & & b_{m-2} & 1\\
 0 & \Cdots & & 0 & c_{m-1} & b_{m-1}
 \end{bNiceMatrix}, 
\end{align}
for \( m\in\{1,\ldots,N\} \). 
The matrices \( J_m\) and \( J\) are known as Jacobi matrices and are irreducible due to the nonzero coefficients \( c_n \). 

\begin{rem}
 Notice that the Jacobi matrix is symmetrizable and its entries satisfy \( h_k^{-1}J_{k,l}=h_l^{-1}J_{l,k}\) for \( k,l\in\N_0 \). 
\end{rem}

\begin{rem}
 For the Favard theorem, the requirement for a Jacobi matrix is that \( c_n > 0\) and \( b_n \in \mathbb{R} \). However, for applications to Markov chains, we need an additional condition, namely \( b_n \geq 0 \), to ensure that the matrix is nonnegative. It is worth noting that for a given
 bounded
Jacobi matrix \( J \), there exists a number \( b\) such that for \( s \geq b \), the matrix \( J_m+sI_m\) becomes nonnegative.
Moreover, in this particular case, there exists a number
\(\displaystyle \tilde{b} = \max_{j \in \{ 1, \ldots , m \}} | x_{m,j}| \) 
(where 
\(
 x_{m,j} 
 \)
 are the
zeros of the 
polynomial \( p_m \))
such that for 
\( s > \tilde{b} \), 
the matrix \( J_m+sI_m\) is oscillatory and admits a positive bidiagonal factorization~(PBF), see for instance~\cite{stbbm}.
\end{rem}

\subsection{Spectral properties}
Let's denote by \( \lbrace x_{n,i}\rbrace^n_{i=1}\) the set of increasing zeros of the \( n\)-th polynomial \( p_n \). Due to the known properties of orthogonal polynomials, we know that these zeros are simple and contained in the interior of \( \Delta \). Moreover, the zeros of \( p_{n-1}\) interlace the zeros of \( p_n \), i.e.,
\begin{align*}
 x_{n,i-1}&<x_{n-1,i-1}<x_{n,i},
\end{align*}
for \( n\in \{2,\ldots,N\}\) and \( i\in\{2,\ldots,n\} \). 
Looking at Equation \eqref{eq:specralJ_dm}, it is easy to notice that the eigenvalues of \( J_m\) are exactly the zeros of \( p_m \), while the associated right eigenvectors are given by
\begin{align}
 \label{eigenvectorrightescalar}
 &\begin{bNiceMatrix}
 p_0(x_{m,i})\\
 \Vdots\\
 p_{m-1}(x_{m,i})
 \end{bNiceMatrix}, 
\end{align}
for \( i\in\{1,\ldots,m\} \). Similarly, the left eigenvector associated with the eigenvalue \( x_{m,i}\) is given by
\begin{align}
 \label{eigenvectorleftescalar}
 &\begin{bNiceMatrix}
 \dfrac{p_0(x_{m,i})}{h_0}&\Cdots &\dfrac{p_{m-1}(x_{m,i})}{h_{m-1}}
 \end{bNiceMatrix}, 
\end{align}
for \( i\in\{1,\ldots,m\} \). In terms of these eigenvectors, we define the matrices
\begin{align*}
 \mathcal U&\coloneq\begin{bNiceMatrix}[]
 p_0(x_{m,1}) & \Cdots & p_0(x_{m,m})\\
 \Vdots & & \Vdots\\
 p_{m-1}(x_{m,1}) & \Cdots & p_{m-1}(x_{m,m})
 \end{bNiceMatrix}, \\ \mathcal V&\coloneq \begin{bNiceMatrix}[]
 \frac{p_0(x_{m,1})}{h_0\sum^m_{l=1}p^2_{l-1}(x_{m,1})h^{-1}_{l-1}} & \Cdots & \frac{p_{m-1}(x_{m,1})}{h_{m-1}\sum^m_{l=1}p^2_{l-1}(x_{m,1})h^{-1}_{l-1}}\\
 \Vdots & & \Vdots\\
 \frac{p_0(x_{m,m})}{h_0\sum^m_{l=1}p^2_{l-1}(x_{m,m})h^{-1}_{l-1}} & \Cdots & \frac{p_{m-1}(x_{m,m})}{h_{m-1}\sum^m_{l=1}p^2_{l-1}(x_{m,m})h^{-1}_{l-1}}
 \end{bNiceMatrix}.
\end{align*}
Notice that
\begin{align*}
 \mathcal U\mathcal V=\mathcal V\mathcal U=I_m.
\end{align*}
Hence, the rows of \( \mathcal V\) are biorthogonal to the columns of \( \mathcal U \), and vice versa. Also, the rows of \( \mathcal U\) are biorthogonal to the columns of \( \mathcal V \). 

 These spectral objects can be used to diagonalize \( J_m\) so that
\begin{align*}
 J_m=\mathcal U \begin{bNiceMatrix}[columns-width=auto]
 x_{m,1} & 0&\Cdots &&0\\
 0 & x_{m,2} & \Ddots& &\Vdots\\
 \Vdots &\Ddots&\Ddots[,shorten-end=-5pt] &&\Ddots\\
 &&&&0\\[4pt]
 0&\Cdots&&0&x_{m,m}
 \end{bNiceMatrix}\mathcal V,
\end{align*}
which provides a useful spectral representation for obtaining an expression for the entries of any power of 
\begin{align*}
 J_m^r=\mathcal U \begin{bNiceMatrix}[columns-width=auto]
 x_{m,1}^r & 0&\Cdots &&0\\
 0 & x_{m,2}^r & \Ddots& &\Vdots\\
 \Vdots &\Ddots[shorten-end=5pt]&\Ddots[shorten-end=-2pt] &&\Ddots\\
 &&&&0\\[4pt]
 0&\Cdots&&0&x_{m,m}^r
 \end{bNiceMatrix}\mathcal V,
\end{align*}
namely
\begin{align}
 \label{Jescalaralar}
 (J_m^r)_{i,j}&=\sum^{m}_{k=1}x^r_{m,k}\dfrac{p_{i-1}(x_{m,k})p_{j-1}(x_{m,k})}{h_{j-1}}\dfrac{1}{\sum^{m}_{l=1}p^2_{l-1}(x_{m,k})h^{-1}_{l-1}}, 
\end{align}
for \( r\in\mathbb N\) and \( i,j\in\N_0 \). 

\subsection{Markov chains}
But let's go back to expression \eqref{eq:specralJ_dm} to see how the Jacobi matrix \( J_m\) can be linked to the birth and death Markov chains theory.
We define
 \begin{align*}
 \sigma_m&\coloneq \diag 
 \left[ \begin{NiceMatrix} p_{0}(x_{m,m}) & \cdots & p_{m-1}(x_{m,m}) \end{NiceMatrix} \right].
 \end{align*}

\begin{pro}
\label{propestocasticaescalar}
Let be the recurrence relation expressed in \eqref{eq:specralJ_dm} with
 \( J_m\) a nonnegative matrix and \( \lbrace{x_{m,i}}\rbrace_{i=1}^m\) the set of increasing zeros of \( p_m\) and \( x_{m,m}>0 \). Then, 
\begin{align}
\label{estocasticaescalar}
 P_m&\coloneq \dfrac{1}{x_{m,m}}\sigma_m^{-1}J_m\sigma_m,
\end{align}
is a stochastic matrix.
\end{pro}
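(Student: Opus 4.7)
The plan is to verify the two defining properties of a stochastic matrix directly: nonnegativity of all entries, and the row-sum condition $P_m e = e$. Both reduce to showing the positivity of the diagonal entries of $\sigma_m$ together with a short calculation using the finite recurrence \eqref{eq:specralJ_dm}.

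For nonnegativity, I would first write out the entries explicitly. Since $\sigma_m$ is diagonal with $k$-th entry $p_{k-1}(x_{m,m})$, conjugation yields
\begin{align*}
(P_m)_{i,j} = \frac{1}{x_{m,m}}\,\frac{p_{j-1}(x_{m,m})}{p_{i-1}(x_{m,m})}\,(J_m)_{i,j}.
\end{align*}
By hypothesis $x_{m,m}>0$ and $(J_m)_{i,j}\geq 0$, so the claim reduces to showing that $p_{k-1}(x_{m,m})>0$ for every $k\in\{1,\ldots,m\}$. I would argue this in two compatible ways. First, the interlacing property recalled right after \eqref{eq:specralJ_dm} implies that $x_{m,m}$ strictly exceeds the largest zero of each $p_k$ with $k<m$; since every $p_k$ is monic, it is positive to the right of all its zeros, giving $p_k(x_{m,m})>0$. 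Alternatively, $J_m$ is irreducible (its subdiagonal entries $c_n$ are strictly positive) and nonnegative, so Theorem \ref{PerronFrobenius} applies: the column vector \eqref{eigenvectorrightescalar} is a right eigenvector for the eigenvalue $x_{m,m}$, which must coincide with the Perron--Frobenius eigenvalue (being the largest of the real eigenvalues of a nonnegative matrix), and hence its entries share a common sign; since $p_0\equiv 1>0$, they are all strictly positive.

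For the row-sum condition I would evaluate the finite recurrence \eqref{eq:specralJ_dm} at $x=x_{m,m}$. Because $x_{m,m}$ is a zero of $p_m$, the correction column on the right-hand side vanishes, and noting that $\sigma_m e$ is precisely the column vector $(p_0(x_{m,m}),\ldots,p_{m-1}(x_{m,m}))^\top$, one obtains
\begin{align*}
J_m\,\sigma_m e = x_{m,m}\,\sigma_m e.
\end{align*}
Left-multiplying by $\sigma_m^{-1}$ (well defined by the positivity established above) and dividing by $x_{m,m}>0$ gives $P_m e = e$, which is the desired stochasticity condition.

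The only non-routine step is the strict positivity of the $p_{k-1}(x_{m,m})$, and I expect this to be the main conceptual point; once this is secured, both the nonnegativity of $P_m$ and the identity $P_m e = e$ follow by elementary manipulations involving the recurrence and the conjugation by the diagonal matrix $\sigma_m$.
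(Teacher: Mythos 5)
Your proposal is correct and follows essentially the same route as the paper: the row-sum condition comes from evaluating the truncated recurrence \eqref{eq:specralJ_dm} at the zero \(x_{m,m}\), and the common sign of \(p_0(x_{m,m}),\ldots,p_{m-1}(x_{m,m})\) comes from identifying \eqref{eigenvectorrightescalar} as the Perron--Frobenius eigenvector of the irreducible nonnegative matrix \(J_m\). Your alternative argument via interlacing and monicity is also the one the paper itself records in the remark immediately following the proof, and your observation that \(p_0\equiv 1\) pins the common sign to be positive is a small but welcome sharpening.
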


\begin{proof}
 We have to check that \( P_m\) is nonnegative and the sum of every one of its rows equals \( 1 \). That is, conditions \eqref{nonegatividad} and \eqref{estocasticidad}.
 In the one hand, using \eqref{eq:specralJ_dm} we get \( P_me=e \). 
 So \( P_m\) satisfies \eqref{estocasticidad}.
 On the other hand, since \( J_m\) is nonnegative by hypothesis and \( x_{m,m}>0 \), the only condition to ensure \( P_m\) is nonnegative is that \( p_0(x_{m,m}),\ldots,p_{m-1}(x_{m,m})\) have all the same sign. But notice these are exactly the components of the eigenvector of \( J_m\) associated to the eigenvalue \( x_{m,m}\); which, in addition, is the spectral radius of \( J_m\) (see Definition \ref{defradioespectral}). Since \( J_m\) is nonnegative and irreducible, the Perron--Frobenius theorem applies and we find that \( p_0(x_{m,m}),\ldots,p_{m-1}(x_{m,m})\) have all the same sign by (iii) in Perron--Frobenius Theorem \ref{PerronFrobenius}. So \( P_m\) satisfies \eqref{nonegatividad}.
\end{proof}

The stochastic matrix \( P_m\) reads
\begin{align*}
P_m =
 \dfrac{1}{x_{m,m}}
 \setlength{\arraycolsep}{1pt}
 \begin{bNiceMatrix}
 b_0 & \frac{p_{1}(x_{m,m})}{p_{0}(x_{m,m})} & 0 & \Cdots & & 0\\
 \frac{p_{0}(x_{m,m})}{p_{1}(x_{m,m})}c_1 & b_1 & \frac{p_{2}(x_{m,m})}{p_{1}(x_{m,m})} & \Ddots & & \Vdots\\
 0 & \frac{p_{1}(x_{m,m})}{p_{2}(x_{m,m})}c_2 & b_2 & \Ddots[shorten-start=-1pt,shorten-end=-10pt] & & \\
 \Vdots & & &\Ddots[shorten-end=-30pt] & & 0\\
 & & \Ddots& \Ddots[shorten-start=-4pt,shorten-end=-28pt] & \hspace*{30pt}b_{m-2} & \frac{p_{m-1}(x_{m,m})}{p_{m-2}(x_{m,m})}\\
 0 & \Cdots & & 0& \frac{p_{m-2}(x_{m,m})}{p_{m-1}(x_{m,m})}c_{m-1} & b_{m-1}
 \end{bNiceMatrix}.
\end{align*}

\begin{rem}
Notice that, alternatively to the Perron--Frobenius theorem, we can use the fact that the zeros of the orthogonal interlace and we take the maximal one.
If we had taken any of the other zeros of \( p_m\) to construct \( P_m \), nonnegativeness is impossible to achieve due to (iv) in Perron--Frobenius Theorem \ref{PerronFrobenius}.
\end{rem}

This matrix \( P_m\) describes a finite \( m\) state Markov chain where there is only transition probability up to first neighbors, i.e., a birth and death Markov chain. 
\begin{pro}\label{pro:uni_class}
 The stochastic matrix \( P_m\) has only one class. 
 \end{pro}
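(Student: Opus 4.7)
The plan is to prove irreducibility of $P_m$ directly from its tridiagonal form, since ``one class'' is equivalent to ``every pair of states communicates.''

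First, I would read off the off-diagonal entries of $P_m$ from the explicit expression displayed immediately after Proposition \ref{propestocasticaescalar}. The sub-diagonal entries are
\begin{align*}
 (P_m)_{i+1,i} &= \frac{1}{x_{m,m}}\,\frac{p_{i-1}(x_{m,m})}{p_{i}(x_{m,m})}\,c_i,
\end{align*}
and the super-diagonal entries are
\begin{align*}
 (P_m)_{i,i+1} &= \frac{1}{x_{m,m}}\,\frac{p_{i}(x_{m,m})}{p_{i-1}(x_{m,m})}.
\end{align*}
By the Perron--Frobenius argument used in the proof of Proposition \ref{propestocasticaescalar}, the numbers $p_0(x_{m,m}), \ldots, p_{m-1}(x_{m,m})$ are all nonzero and of the same sign, so each ratio $p_{i}(x_{m,m})/p_{i-1}(x_{m,m})$ is strictly positive. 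Combined with $c_i > 0$ and $x_{m,m} > 0$, this shows that every super- and sub-diagonal entry of $P_m$ is strictly positive.

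Next, given any two distinct states $i,j \in \{1,\ldots,m\}$, I would exhibit an explicit path from $i$ to $j$: step through the consecutive states $i, i\pm 1, i\pm 2, \ldots, j$, where the sign is chosen according to whether $j>i$ or $j<i$. Each single step along this path has strictly positive probability by the preceding paragraph, so taking the product gives $(P_m^{|j-i|})_{i,j} > 0$. Hence $i \to j$, and by swapping the roles of $i$ and $j$ we likewise get $j \to i$. Therefore $i$ and $j$ communicate, and since they were arbitrary, all states lie in a single class.

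There is really no hard step here — the entire argument is a direct consequence of the tridiagonal structure plus the sign information already extracted from Perron--Frobenius in the previous proposition. The only point worth being careful about is invoking the correct item of Theorem \ref{PerronFrobenius} to guarantee that the ratios $p_{i}(x_{m,m})/p_{i-1}(x_{m,m})$ do not vanish and have constant sign, which is exactly what makes the super- and sub-diagonals strictly positive rather than merely nonnegative.
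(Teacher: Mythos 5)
Your proof is correct and follows essentially the same route as the paper's: the paper likewise argues from the band structure of \( P_m \) and the nonvanishing of the extreme diagonals that any two states communicate. You have simply filled in the details the paper leaves implicit, namely the strict positivity of the sub- and super-diagonal entries (via \( c_i>0 \), \( x_{m,m}>0 \), and the constant sign of the \( p_k(x_{m,m}) \) from Perron--Frobenius) and the explicit nearest-neighbour path giving \( (P_m^{|j-i|})_{i,j}>0 \).
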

 
 \begin{proof}
 We need to show that all states are communicated. That is, for any couple \( (i,j)\in\N_0^2\) we can find \( n\in\N\) such that \( (P^n_m)_{i,j}>0 \). 
 From the band structure of the stochastic matrix \( P_m\) and the fact that the extreme diagonals have non zero entries one can show that such \( n\)
 does exist.
 As \( P_m\) is tridiagonal, \( P_m^2\) is pentadiagonal, and in general \( P_m^n\) can have up to \( (2n+1)\) diagonal.
 \end{proof}
Since \( P_m\) is, actually, a matrix conjugation of \( J_m \), we can use Equation \eqref{Jescalaralar} to find the following representation formula for the transition probabilities after \( r\) steps.

\begin{pro} The spectral representation for the iterated probabilities 
 \begin{align}
 \label{formularepresentacionescalar}
 (P_m^r)_{i,j}&=\dfrac{1}{x^r_{m,m}}\dfrac{p_{j-1}(x_{m,m})}{h_{j-1}p_{i-1}(x_{m,m})}\sum^{m}_{k=1}x^r_{m,k}\dfrac{p_{i-1}(x_{m,k})p_{j-1}(x_{m,k})}{\sum^{m}_{l=1}p^2_{l-1}(x_{m,k})h^{-1}_{l-1}},
 \end{align}
for a number of transitions \( r\in\mathbb N_0 \), is satisfied.
\end{pro}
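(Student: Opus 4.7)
The plan is to reduce the identity to a straightforward bookkeeping exercise that combines two ingredients already available in the text: the diagonal similarity \eqref{estocasticaescalar} between \( P_m \) and \( J_m \), and the spectral expansion \eqref{Jescalaralar} for the powers of \( J_m \). No genuinely new idea is needed; the content is almost entirely algebraic manipulation.

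First I would raise \eqref{estocasticaescalar} to the \( r \)-th power. Since \( \sigma_m \) is invertible and commutes out of the product as a similarity, one immediately obtains
\begin{align*}
 P_m^r \;=\; \frac{1}{x_{m,m}^r}\,\sigma_m^{-1}\,J_m^r\,\sigma_m,
\end{align*}
and because \( \sigma_m \) is diagonal with entries \( (\sigma_m)_{k,k}=p_{k-1}(x_{m,m}) \), reading off the \( (i,j) \)-entry gives
\begin{align*}
 (P_m^r)_{i,j} \;=\; \frac{1}{x_{m,m}^r}\,\frac{p_{j-1}(x_{m,m})}{p_{i-1}(x_{m,m})}\,(J_m^r)_{i,j}.
\end{align*}

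Next I would substitute the spectral formula \eqref{Jescalaralar} for \( (J_m^r)_{i,j} \). Factoring the norms and the denominator \( \sum_l p_{l-1}^2(x_{m,k})h_{l-1}^{-1} \) precisely as they appear in \eqref{Jescalaralar}, the product of the three factors reorganizes into
\begin{align*}
 (P_m^r)_{i,j} \;=\; \frac{1}{x_{m,m}^r}\,\frac{p_{j-1}(x_{m,m})}{h_{j-1}\,p_{i-1}(x_{m,m})}\,\sum_{k=1}^{m} x_{m,k}^r\,\frac{p_{i-1}(x_{m,k})\,p_{j-1}(x_{m,k})}{\sum_{l=1}^{m} p_{l-1}^2(x_{m,k})\,h_{l-1}^{-1}},
\end{align*}
which is exactly \eqref{formularepresentacionescalar}. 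The case \( r=0 \) is covered by the biorthogonality relation \( \mathcal U\mathcal V=I_m \) already recorded in the text, which specializes to the Christoffel--Darboux-type identity encoded by the \( h_{l-1}^{-1} \)-weighted denominator.

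There is no real obstacle here: the only point to be careful about is matching the indexing conventions (rows and columns indexed from \( 1 \) to \( m \), while polynomial degrees run from \( 0 \) to \( m-1 \)) so that the factors \( p_{i-1}(x_{m,m}) \) and \( p_{j-1}(x_{m,m}) \) coming from \( \sigma_m^{-1} \) and \( \sigma_m \) land on the correct side of the quotient, and that the weight \( h_{j-1}^{-1} \) from \eqref{Jescalaralar} is not absorbed into the sum. Once these bookkeeping details are in place, the identity follows by a one-line substitution, so the proof in the paper should be quite short.
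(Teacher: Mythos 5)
Your argument is correct and coincides with the paper's own reasoning: the text derives \eqref{formularepresentacionescalar} precisely by noting that \( P_m \) is a conjugation of \( J_m \) by the diagonal matrix \( \sigma_m \), so \( P_m^r=\frac{1}{x_{m,m}^r}\sigma_m^{-1}J_m^r\sigma_m \), and then inserting the spectral formula \eqref{Jescalaralar}. Your bookkeeping of the factors \( p_{i-1}(x_{m,m}) \), \( p_{j-1}(x_{m,m}) \) and \( h_{j-1}^{-1} \) matches the stated identity, so nothing is missing.
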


With this expression we can prove the following result.
\begin{pro}
 \label{prorecurrentesescalar}
 The Markov chain described by the stochastic matrix \( P_m \), defined by Equation \eqref{estocasticaescalar}, is recurrent.
\end{pro}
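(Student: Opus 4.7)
The plan is to verify the recurrence criterion in Remark~\ref{condicionsuficienteparalarecurrencia}: the state $i$ is recurrent if and only if $\lim_{s\to 1^-}P_{i,i}(s)=+\infty$, where $P_{i,i}(s)=\sum_{r=0}^{\infty}(P_m^r)_{i,i}\,s^r$. The natural tool is the spectral representation \eqref{formularepresentacionescalar} of the iterated transition probabilities, which expresses $(P_m^r)_{i,i}$ as a finite linear combination of the powers $(x_{m,k}/x_{m,m})^r$ for $k=1,\ldots,m$.

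Concretely, I would substitute \eqref{formularepresentacionescalar} into the definition of $P_{i,i}(s)$, interchange the (absolutely convergent, for $|s|<1$) sums, and apply the geometric series identity to obtain
\begin{align*}
P_{i,i}(s)=\frac{1}{h_{i-1}}\sum_{k=1}^{m}\frac{p_{i-1}^{2}(x_{m,k})}{\Bigl(\sum_{l=1}^{m}p_{l-1}^{2}(x_{m,k})h_{l-1}^{-1}\Bigr)\bigl(1-s\,x_{m,k}/x_{m,m}\bigr)}.
\end{align*}
I would then isolate the $k=m$ term, whose denominator contains the factor $1-s$ and therefore blows up as $s\to 1^-$. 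The remaining $m-1$ terms stay bounded because the zeros of $p_m$ are simple and strictly ordered, so $x_{m,k}/x_{m,m}<1$ for $k<m$, which keeps $1-s\,x_{m,k}/x_{m,m}$ bounded away from zero in a neighbourhood of $s=1$.

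The only delicate point is making sure that the residue of the singular term at $s=1$ is actually nonzero, i.e., that $p_{i-1}(x_{m,m})\neq 0$. This follows at once from assertion (iii) of Perron--Frobenius Theorem~\ref{PerronFrobenius}, already invoked in the proof of Proposition~\ref{propestocasticaescalar}: the Perron eigenvector $(p_0(x_{m,m}),\ldots,p_{m-1}(x_{m,m}))^{\top}$ has all its components strictly of one sign, hence nonvanishing, and similarly all $h_{l-1}>0$, so the coefficient multiplying $1/(1-s)$ is strictly positive. An alternative, more conceptual route is to combine Proposition~\ref{pro:uni_class} with the classical fact that every finite irreducible Markov chain is recurrent; I would mention this as a sanity check, but the spectral argument is preferable here since it stays within the polynomial-theoretic framework of the paper and will be reusable in subsequent quantitative statements such as those on expected return times.
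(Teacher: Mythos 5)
Your proposal is correct and follows essentially the same route as the paper: it sums the spectral representation \eqref{formularepresentacionescalar} over \( r \), isolates the \( k=m \) term, and concludes that the geometric series with ratio \( 1 \) makes \( P_{i,i}(s) \) diverge as \( s\to 1^- \). The only difference is that you make explicit the nonvanishing of \( p_{i-1}(x_{m,m}) \) via Perron--Frobenius and the boundedness of the remaining terms, details the paper leaves implicit.
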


\begin{proof}
 To prove it is recurrent we are going to show state \( i\) is recurrent, \( i\in\lbrace{1,\ldots,m}\rbrace \). Recalling Remark~\ref{condicionsuficienteparalarecurrencia} we deduce that the only thing to check is that the limit
 \begin{align*}
 \lim_{s\rightarrow1^{-}}(P_{m}(s))_{i,i}=\lim_{s\rightarrow1^{-}}\sum_{r=0}^{\infty}{(P^r_m)_{i,i}s^r}=\sum_{r=0}^{\infty}{(P^r_m)_{i,i}} ,
 \end{align*}
 diverges. Replacing \( (P_m^r)_{i,i}\) with the previous formula (\ref{formularepresentacionescalar}) we conclude that
 \begin{align*}
 \sum_{r=0}^{\infty}(P_m^r)_{i,i}=\sum^{m}_{k=1}\dfrac{p^2_{i-1}(x_{m,k})}{h_{i-1}\sum^{m}_{l=1}p^2_{l-1}(x_{m,k})h^{-1}_{l-1}}\sum_{r=0}^{\infty}\left(\dfrac{x_{m,k}}{x_{m,m}}\right)^r.
 \end{align*}
 which diverges when \( k=m \). 
\end{proof}

\begin{lemma}
 If there exists a state \( i\in\{1,\ldots,m\}\) such that \( b_{i}>0\) the Markov chain associated with \( P_m\) is aperiodic (period \( 1\)). If \( b_{i}=0 \), \( i\in\{1,\ldots,m\} \), then the Markov chain has period \( 2 \). 
\end{lemma}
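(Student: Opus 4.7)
The plan is to split the two cases and exploit the explicit form of $P_m$ together with the irreducibility established in Proposition~\ref{pro:uni_class}. The key observation is that the diagonal entries of $P_m$ are exactly $\frac{b_{0}}{x_{m,m}}, \frac{b_{1}}{x_{m,m}}, \ldots, \frac{b_{m-1}}{x_{m,m}}$, so the positivity of some $b_i$ is equivalent to the positivity of a diagonal entry of $P_m$. Moreover, from Proposition~\ref{propestocasticaescalar} the Perron--Frobenius theorem guarantees that $p_{0}(x_{m,m}),\ldots,p_{m-1}(x_{m,m})$ are all of the same sign; combined with $c_i>0$ and $x_{m,m}>0$, this forces every super- and sub-diagonal entry of $P_m$ to be strictly positive.

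For the first case, assume some $b_i>0$. Then the corresponding diagonal entry of $P_m$ is strictly positive, and Remark~\ref{condicionsuficienteaperiodicidad} immediately gives that this state is aperiodic, i.e.\ has period $1$. By Proposition~\ref{pro:uni_class} the chain has a single class, and the Class Properties Theorem asserts that all states in a class share the same period. Hence every state of the chain is aperiodic, and the Markov chain itself has period $1$.

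For the second case, assume $b_i=0$ for every $i$. Then $P_m$ is tridiagonal with zero diagonal and strictly positive off-diagonals, so from any state the only admissible transitions are to the immediate neighbors. Partitioning the states into even- and odd-indexed ones shows that $P_m$ is bipartite: any path from state $i$ back to $i$ must alternate between these two subsets and therefore has even length, which yields $(P_m^r)_{i,i}=0$ for all odd $r$. On the other hand, for any state $i\in\{1,\ldots,m\}$ there exists a neighbor $j\in\{i-1,i+1\}\cap\{1,\ldots,m\}$ with $(P_m)_{i,j}>0$ and $(P_m)_{j,i}>0$, so $(P_m^2)_{i,i}>0$. Thus the set $\{r\in\mathbb N:(P_m^r)_{i,i}>0\}$ is contained in $2\mathbb N$ and contains $2$, giving $d(i)=2$. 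Since there is a single class, this period is shared by every state, and the chain has period~$2$.

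The only mildly delicate point is the parity argument in the second case, and it is handled by the bipartite structure that tridiagonality with vanishing diagonal forces; apart from that, both parts reduce to combining Remark~\ref{condicionsuficienteaperiodicidad}, Proposition~\ref{pro:uni_class}, and the Class Properties Theorem, so no technical obstacle is expected.
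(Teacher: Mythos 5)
Your proof is correct and follows essentially the same route as the paper: the aperiodic case via a positive diagonal entry plus the single-class property, and the all-$b_i=0$ case via the parity structure of the tridiagonal matrix (the paper phrases this as odd/even powers of $P_m$ having nonzero entries only on odd/even diagonals, which is the same bipartite observation you make). Your version is slightly more explicit in verifying $(P_m^2)_{i,i}>0$ and the strict positivity of the off-diagonal entries, which the paper leaves as "can be checked."
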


\begin{proof}
 If \( b_i>0\) by definition the state \( i\) has period \( 1 \). As all states are in the same class the statement follows. However, if all \( b_i=0 \), then it can be checked that odd powers of \( P_m\) have only their odd diagonals with nonzero entries, and even powers of \( P_m\) have only their even diagonals with nonzero entries. 
Hence, \( (P_m^{2n})_{i,i}>0\) and \( (P_m^{2n+1})_{i,i}=0 \), so that we have period \( 2 \). 
\end{proof}

\begin{coro}
 The Markov chain described by the stochastic matrices \( P_{m} \), defined at Proposition \ref{propestocasticaescalar} is ergodic if and only if there exists a state \( i\in\{1,\ldots,m\}\) such that \( b_{i}>0 \). 
\end{coro}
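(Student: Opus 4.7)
The plan is to observe that this corollary follows immediately by combining the two results proved just above it, since by Definition \ref{defergodico} ergodicity decomposes as the conjunction of recurrence and aperiodicity, and each of these has already been characterised.

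First, I would note that recurrence is automatic: Proposition \ref{prorecurrentesescalar} establishes that the Markov chain associated with \( P_m \) is recurrent, with no hypothesis on the diagonal entries \( b_i \). Therefore, for the equivalence in the statement it suffices to show that aperiodicity is equivalent to the existence of a state \( i\in\{1,\ldots,m\} \) with \( b_i>0 \).

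For the direct implication, suppose \( P_m \) is ergodic; then by Definition \ref{defergodico} it is aperiodic. By the contrapositive of the second assertion in the preceding lemma, it cannot happen that \( b_i=0 \) for every \( i\in\{1,\ldots,m\} \), for otherwise the chain would have period \( 2 \). Hence at least one \( b_i \) is strictly positive. For the converse, if there exists a state \( i \) with \( b_i>0 \), the first assertion of the preceding lemma (which uses the irreducibility given by Proposition \ref{pro:uni_class} together with the class invariance of the period) shows that the chain is aperiodic. Combining this with recurrence from Proposition \ref{prorecurrentesescalar} and applying Definition \ref{defergodico} yields ergodicity.

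There is no significant obstacle here, as the corollary is a direct bookkeeping consequence of the previous lemma and proposition; the only point worth making explicit in the write-up is the appeal to Proposition \ref{pro:uni_class}, so that aperiodicity of a single state propagates to the whole chain via the class property.
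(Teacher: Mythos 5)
Your argument is correct and is precisely the paper's: ergodicity is the conjunction of recurrence and aperiodicity, recurrence is unconditional by Proposition \ref{prorecurrentesescalar}, and aperiodicity is characterised by the preceding lemma (with the class property propagating periodicity). The paper's proof is just a one-line version of this same bookkeeping.
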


\begin{proof}
 It follows immediately from the fact that Markov chains are ergodic if and only if they are aperiodic and recurrent.
\end{proof}
Let's discuss some more properties of this stochastic matrix \( P_m \). Looking at the expressions for the right and left eigenvectors of \( J_m\) (Equations \eqref{eigenvectorrightescalar} and \eqref{eigenvectorleftescalar}) and how \( P_m\) is defined (Equation \eqref{estocasticaescalar}), it is easy to deduce the following result:

\begin{pro}
 \label{proestacionarioescalar}
 Let \( P_m\) be the stochastic matrix defined in Equation \eqref{estocasticaescalar}. Then, the probability vector
 \begin{align}
 \label{estacionarioescalar}
 \pi_m\coloneq \dfrac{1}{\sum^{m}_{l=1}p^2_{l-1}(x_{m,m})h^{-1}_{l-1}} \begin{bNiceMatrix}
 \dfrac{p_0^2(x_{m,m})}{h_0} & \Cdots &\dfrac{p_{m-1}^2(x_{m,m})}{h_{m-1}}
 \end{bNiceMatrix} ,
 \end{align}
 is a steady state for \( P_m \), i.e., \( \pi_mP_m=\pi_m \). 
\end{pro}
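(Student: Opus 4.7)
The plan is to recognize $\pi_m$ as essentially the left Perron--Frobenius eigenvector of $J_m$ rescaled by $\sigma_m$, and then leverage the conjugation $P_m=\frac{1}{x_{m,m}}\sigma_m^{-1}J_m\sigma_m$ to translate the left-eigenvector identity for $J_m$ into the invariance $\pi_m P_m=\pi_m$.

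First I would set
\[
v\coloneq \begin{bNiceMatrix} \dfrac{p_0(x_{m,m})}{h_0} & \Cdots & \dfrac{p_{m-1}(x_{m,m})}{h_{m-1}}\end{bNiceMatrix},
\]
which is precisely the left eigenvector of $J_m$ associated with the eigenvalue $x_{m,m}$, as recorded in \eqref{eigenvectorleftescalar}; thus $vJ_m=x_{m,m}\,v$. Next, observe that since $\sigma_m=\diag\begin{bNiceMatrix}p_0(x_{m,m})&\Cdots&p_{m-1}(x_{m,m})\end{bNiceMatrix}$, the componentwise product gives $(v\sigma_m)_k=\frac{p_{k-1}^2(x_{m,m})}{h_{k-1}}$, so that $\pi_m=Z^{-1}v\sigma_m$ with $Z\coloneq \sum_{l=1}^m p_{l-1}^2(x_{m,m})h_{l-1}^{-1}$.

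The invariance then follows by a short direct calculation: using \eqref{estocasticaescalar},
\[
\pi_m P_m = Z^{-1}\,v\sigma_m\cdot\dfrac{1}{x_{m,m}}\sigma_m^{-1}J_m\sigma_m
=\dfrac{1}{x_{m,m}Z}\,(vJ_m)\sigma_m
=\dfrac{1}{x_{m,m}Z}\,x_{m,m}\,v\sigma_m=\pi_m.
\]
To finish, I would verify that $\pi_m$ really is a probability vector. Nonnegativity of its entries is automatic since each entry has the form $\frac{p_{k-1}^2(x_{m,m})}{h_{k-1}Z}\geq 0$ (no sign issues, as the components are squared; in fact by Perron--Frobenius applied as in Proposition \ref{propestocasticaescalar} the $p_{k-1}(x_{m,m})$ are of one sign and nonzero, so $\pi_m$ is strictly positive), and the normalization $\sum_{k=1}^m(\pi_m)_k=1$ holds by construction of $Z$.

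There is essentially no hard step here: the whole content is the matching between the left Perron eigenvector of $J_m$ and the steady state of its similarity-transformed (and rescaled) version $P_m$. The only point that deserves a line of care is checking that $\sigma_m$ is invertible, which is ensured by $p_{k-1}(x_{m,m})\neq 0$ for $k\in\{1,\ldots,m\}$; this follows from the Perron--Frobenius argument already used in Proposition \ref{propestocasticaescalar} (items iii) and iv) of Theorem \ref{PerronFrobenius}), guaranteeing that none of the entries of the Perron eigenvector vanish.
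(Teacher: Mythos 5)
Your proof is correct and follows essentially the same route as the paper: both identify $\pi_m$ as the left Perron eigenvector \eqref{eigenvectorleftescalar} of $J_m$ rescaled by $\sigma_m$, and then push the left-eigenvalue identity through the conjugation \eqref{estocasticaescalar}. Your additional remarks on positivity of the entries and invertibility of $\sigma_m$ are sound and merely make explicit what the paper leaves implicit.
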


\begin{proof}
 It follows from
 \begin{align*}
 \pi_mP_m&=\frac{1}{x_{m,m}}\pi_m\sigma_m^{-1} J_m\sigma_m=\frac{1}{x_{m,m}} \dfrac{1}{\sum^{m}_{l=1}p^2_{l-1}(x_{m,m})h^{-1}_{l-1}}\begin{bNiceMatrix}
 \dfrac{p_0(x_{m,m})}{h_0} & \Cdots &\dfrac{p_{m-1}(x_{m,m})}{h_{m-1}}
 \end{bNiceMatrix}J_m\sigma_m\\&= \dfrac{1}{\sum^{m}_{l=1}p^2_{l-1}(x_{m,m})h^{-1}_{l-1}}
 \begin{bNiceMatrix}
 \dfrac{p_0(x_{m,m})}{h_0} & \Cdots &\dfrac{p_{m-1}(x_{m,m})}{h_{m-1}}
 \end{bNiceMatrix}\sigma_m\\&=\pi_m.
 \end{align*}
\end{proof}

\begin{coro}
 Let us assume a Markov chain with the stochastic matrix \( P_m\) defined in Equation \eqref{estocasticaescalar}. Then, the steady state \( \pi_m\) in Equation \eqref{estacionarioescalar} is unique, and the expected return time is given by
\begin{align*}
 (\bar t_m)_j &= \frac{ h_{j-1}\sum^{m}_{l=1}p^2_{l-1}(x_{m,m})h^{-1}_{l-1}}{p^2_{j-1}(x_{m,m})},
 & j &\in \{ 1, \ldots , m \}.
\end{align*}
\end{coro}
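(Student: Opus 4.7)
The statement has two parts: uniqueness of the steady state, and the explicit formula for the expected return times. Both follow by invoking results already established in the excerpt, so the plan is essentially to assemble the ingredients rather than to perform new computations.

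For uniqueness, the plan is to invoke the general theorem stated earlier in the Markov chain preliminaries: an irreducible recurrent Markov chain admits a unique steady state, whose components are the reciprocals of the expected return times. The two hypotheses for this theorem are already in place. Irreducibility is Proposition \ref{pro:uni_class}, which shows $P_m$ has only one class. Recurrence is Proposition \ref{prorecurrentesescalar}. Proposition \ref{proestacionarioescalar} exhibits the explicit steady state $\pi_m$ in Equation \eqref{estacionarioescalar}, so the cited theorem guarantees that this $\pi_m$ is the unique one.

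For the expected return time formula, I would simply combine the identity $(\bar t_m)_j = 1/(\pi_m)_j$ (part ii of the general theorem recalled earlier) with the explicit entries of $\pi_m$ read off from Equation \eqref{estacionarioescalar}. The $j$-th entry of $\pi_m$ is
\begin{align*}
(\pi_m)_j = \frac{1}{\sum_{l=1}^{m} p_{l-1}^2(x_{m,m})h_{l-1}^{-1}} \cdot \frac{p_{j-1}^2(x_{m,m})}{h_{j-1}},
\end{align*}
and taking reciprocals yields exactly the claimed expression for $(\bar t_m)_j$.

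There is essentially no obstacle here, as the work has already been done in the preceding propositions; the only subtlety to mention is that one must verify that $(\pi_m)_j > 0$ for every $j$ so that the reciprocal is defined. This positivity is immediate from the Perron--Frobenius argument used in the proof of Proposition \ref{propestocasticaescalar}: all components $p_{j-1}(x_{m,m})$ of the Perron eigenvector share a common sign and none vanishes, since $p_m$ has simple zeros interlacing those of $p_{m-1}$ (so $p_{j-1}(x_{m,m}) \neq 0$ for $j \leq m$), combined with $h_{j-1} > 0$. Thus each $(\pi_m)_j$ is strictly positive, the formula for $(\bar t_m)_j$ is well defined, and the proof consists of writing these two short observations.
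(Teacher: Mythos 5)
Your proof is correct and follows essentially the same route as the paper: invoke recurrence (Proposition \ref{prorecurrentesescalar}) together with the general theorem that a recurrent irreducible chain has a unique steady state whose entries are reciprocals of the expected return times, then read off the entries of \( \pi_m \) from Equation \eqref{estacionarioescalar}. Your added remark on the strict positivity of \( (\pi_m)_j \) via Perron--Frobenius is a small, welcome refinement that the paper leaves implicit.
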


\begin{proof}
 As was proved, the Markov chain is recurrent, so the steady state is unique, and its entries are reciprocal to these expected return times.
\end{proof}

\begin{coro}
 Let us assume a Markov chain with the stochastic matrix \( P_m\) defined in Equation \eqref{estocasticaescalar}. Then, the steady state \( \pi\) is the limit of the iterated probabilities
 \begin{align*}
 (\pi_m)_j=\lim_{r\to\infty}(P_{m}^r)_{i,j} , && i, j \in \{ 1, \ldots , m \} .
 \end{align*}
 Moreover, the convergence is geometric in terms of the second largest zero:
 \begin{align*}
 (P_{m}^r)_{i,j}-(\pi_m)_j
 & =
 \dfrac{p_{j-1}(x_{m,m})}{p_{i-1}(x_{m,m})h_{j-1}}\dfrac{p_{i-1}(x_{m,m-1})
 p_{j-1}(x_{m,m-1})}{\sum^{m}_{l=1}p_{l-1}^2(x_{m,m-1})h_{l-1}^2}\left(\dfrac{x_{m,m-1}}{x_{m,m}}\right)^r
 + \cdots \xrightarrow[r\to\infty]{} 0 .
 \end{align*}
\end{coro}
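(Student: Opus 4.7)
The plan is to derive both assertions directly from the spectral representation \eqref{formularepresentacionescalar} by separating the dominant contribution coming from the Perron--Frobenius eigenvalue $x_{m,m}$ from the remaining terms. The key observation is that \eqref{formularepresentacionescalar} can be rewritten as
\begin{align*}
 (P_m^r)_{i,j} = \dfrac{p_{j-1}(x_{m,m})}{h_{j-1}\,p_{i-1}(x_{m,m})}\sum_{k=1}^{m}\left(\dfrac{x_{m,k}}{x_{m,m}}\right)^{r}\dfrac{p_{i-1}(x_{m,k})\,p_{j-1}(x_{m,k})}{\sum_{l=1}^{m}p_{l-1}^{2}(x_{m,k})\,h_{l-1}^{-1}} ,
\end{align*}
so the whole analysis reduces to controlling the ratios $x_{m,k}/x_{m,m}$.

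First, I would isolate the $k=m$ summand. A direct simplification shows it equals
\begin{align*}
\dfrac{p_{j-1}^{2}(x_{m,m})}{h_{j-1}\sum_{l=1}^{m}p_{l-1}^{2}(x_{m,m})\,h_{l-1}^{-1}},
\end{align*}
which is precisely $(\pi_m)_j$ by the formula \eqref{estacionarioescalar} in Proposition~\ref{proestacionarioescalar}. Subtracting this term yields the identity
\begin{align*}
 (P_m^r)_{i,j}-(\pi_m)_j=\dfrac{p_{j-1}(x_{m,m})}{h_{j-1}\,p_{i-1}(x_{m,m})}\sum_{k=1}^{m-1}\left(\dfrac{x_{m,k}}{x_{m,m}}\right)^{r}\dfrac{p_{i-1}(x_{m,k})\,p_{j-1}(x_{m,k})}{\sum_{l=1}^{m}p_{l-1}^{2}(x_{m,k})\,h_{l-1}^{-1}},
\end{align*}
which already displays the claimed expansion, the explicit $k=m-1$ term being the one written out in the statement and the ellipsis absorbing the remaining contributions $k=1,\ldots,m-2$.

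Next, I would justify that each ratio $|x_{m,k}/x_{m,m}|<1$ for $k<m$, so that every summand decays geometrically and the whole difference tends to zero. This is where the main subtlety lies: irreducibility and nonnegativity alone only guarantee that $x_{m,m}$ is the spectral radius, not that it is the unique eigenvalue of maximum modulus; one needs the aperiodicity hypothesis (i.e.\ some $b_i>0$), which, combined with the earlier corollary on ergodicity, makes $P_m$ primitive and therefore forces a strict spectral gap. Under this assumption, the sum on the right is bounded by a geometric factor whose rate is governed by $\max_{k<m}|x_{m,k}/x_{m,m}|$, and the leading contribution among those with the largest modulus is explicitly the $k=m-1$ term shown.

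The main obstacle is this last step: making rigorous the identification of the dominant sub-leading eigenvalue. One must argue that, under ergodicity, the spectral radius of $P_m$ is simple and strictly dominant, so that the peripheral spectrum reduces to $\{1\}$, and then match the next-to-leading decay rate to the ratio $x_{m,m-1}/x_{m,m}$. Once this is granted, the convergence $(P_m^r)_{i,j}\to(\pi_m)_j$ and its geometric rate follow immediately from the displayed identity.
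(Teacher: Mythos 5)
Your proposal is correct and takes essentially the same route as the paper: the paper's entire proof is the one-line observation that the corollary follows from the spectral representation \eqref{formularepresentacionescalar}, i.e.\ precisely your decomposition isolating the \( k=m \) summand, identifying it with \( (\pi_m)_j \) via \eqref{estacionarioescalar}, and reading off the geometric decay of the remaining terms. Your extra caveat about aperiodicity is a fair refinement the paper glosses over — when all \( b_n=0 \) the zeros are symmetric, \( x_{m,1}=-x_{m,m} \), so the \( k=1 \) term oscillates without decaying and the stated limit (and the identification of \( x_{m,m-1}/x_{m,m} \) as the dominant sub-leading ratio, which also requires \( |x_{m,1}|\leq x_{m,m-1} \)) needs the strict gap \( |x_{m,k}|<x_{m,m} \) for \( k<m \) that the paper tacitly assumes.
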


\begin{proof}
 It follows from Equation \eqref{formularepresentacionescalar}.
\end{proof}

\begin{rem}
 Then, this steady state is the equilibrium state to which the evolution of any probability vector will tend to. This happens even when the chain is not ergodic, i.e., the Markov chain has period \( 2 \). 
\end{rem}

\begin{pro}[Reversibility]\label{pro:reversibility}
The Markov chain with stochastic matrix \( P_m\) is reversible, meaning that the chain and its time-reversed version are statistically identical.
\end{pro}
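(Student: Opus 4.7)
The plan is to verify the detailed balance equations $(\pi_m)_i\,(P_m)_{i,j}=(\pi_m)_j\,(P_m)_{j,i}$ for all $i,j\in\{1,\ldots,m\}$, which by the characterization recalled in the introduction is equivalent to reversibility. Since $P_m$ is tridiagonal, the detailed balance equations are automatic whenever $|i-j|\geq 2$ (both sides vanish), and they are trivial when $i=j$. So the whole content of the claim reduces to checking the single family of identities corresponding to $j=i+1$, for $i\in\{1,\ldots,m-1\}$.

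For that single case, I would simply substitute the explicit entries of $P_m$ read off from the tridiagonal display after Proposition~\ref{propestocasticaescalar}, namely
\begin{align*}
 (P_m)_{i,i+1}&=\frac{1}{x_{m,m}}\frac{p_i(x_{m,m})}{p_{i-1}(x_{m,m})}, &
 (P_m)_{i+1,i}&=\frac{1}{x_{m,m}}\frac{p_{i-1}(x_{m,m})}{p_i(x_{m,m})}\,c_i ,
\end{align*}
together with the expression for the steady state from Proposition~\ref{proestacionarioescalar},
\begin{align*}
 (\pi_m)_i&=\frac{1}{Z}\,\frac{p_{i-1}^2(x_{m,m})}{h_{i-1}}, & Z&\coloneq \sum_{l=1}^m\frac{p_{l-1}^2(x_{m,m})}{h_{l-1}} .
\end{align*}
Both $(\pi_m)_i(P_m)_{i,i+1}$ and $(\pi_m)_{i+1}(P_m)_{i+1,i}$ collapse, after cancellation, to an expression of the form $\frac{p_{i-1}(x_{m,m})p_i(x_{m,m})}{Z\,x_{m,m}}$ times a factor depending only on $h_{i-1}$, $h_i$ and $c_i$. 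Equality of these two factors is exactly the recorded relation $c_i=h_i/h_{i-1}$ from the three-term recurrence~\eqref{recurrenciaescalar}, i.e., the symmetrizability of the Jacobi matrix noted in the remark after the definition of $J_m$.

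There is no real obstacle here: the only substantive input beyond bookkeeping is the identity $c_i h_{i-1}=h_i$, which is intrinsic to the monic orthogonal polynomial setup. I would therefore present the verification as a short, direct computation, remark that tridiagonality disposes of all other entries, and conclude that $\pi_m P_m=\pi_m$ is actually realized via detailed balance, so $Q=P_m$ and the chain is reversible.
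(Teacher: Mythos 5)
Your proposal is correct and follows essentially the same route as the paper: both arguments verify the detailed balance equation for the stationary distribution \( \pi_m \), and the identity \( c_i h_{i-1}=h_i \) that you isolate is precisely the symmetrizability relation \( h_i^{-1}J_{i,j}=h_j^{-1}J_{j,i} \) that the paper invokes. The only cosmetic difference is that you reduce to the superdiagonal entries via tridiagonality and substitute explicit matrix entries, whereas the paper runs the same computation for general \( (i,j) \) in one line.
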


\begin{proof}
Let us recall that the Jacobi matrix is symmetrizable, i.e., \( h_i^{-1}J_{i,j}=h_j^{-1} J_{j,i} \). The entries of the stochastic matrix \( P_m\) are given by 
\begin{align*}
 (P_m)_{i,j}=\frac{1}{x_{m,m}}\frac{p_j(x_{m,m})}{p_i(x_{m,m})}J_{i,j}. 
\end{align*}
Therefore, the symmetrization of the Jacobi matrix leads to the following relation:
\begin{align*}
 \frac{(p_i(x_{m,m}))^2}{h_i} (P_m)_{i,j}=\frac{(p_j(x_{m,m}))^2}{h_j} (P_m)_{j,i}.
\end{align*} 
This relation implies that the steady distribution \( \pi\) in Equation \eqref{estacionarioescalar} satisfies the detailed balance equation \( (\pi_m)_{i+1}(P_m)_{i,j}=(\pi_m)_{j+1}(P_m)_{j,i} \). 
\end{proof}

Let's consider the possibility of a positive bidiagonal factorization (PBF) of \( J_m\) in the form:
\begin{align}
 \label{factorizacionbidiagonalescalar}
 J_{m}&=L_mU_m, 
\end{align}
with
\begin{align*}
 L_m&\coloneq
 \begin{bmatrix}[]
 1 & 0 &\Cdots & & & 0\\
 a_2 & 1 & \Ddots& & & \Vdots\\
 0 & a_4 & \Ddots & & & \\
 \Vdots & \Ddots[shorten-end=2pt]& \Ddots[shorten-end=-8pt]& & & \\
 & & & & & 0\\[3pt]
 0 & \Cdots& & 0 & a_{2m-2} & 1\\
 \end{bmatrix}, & U_m&\coloneq 
 \begin{bmatrix}[]
 a_1 & 1 & 0 & \Cdots & & 0\\
 0 & a_3 & 1 & \Ddots & & \Vdots\\
 \Vdots & \Ddots[shorten-end=3pt] &\Ddots[shorten-end=-4pt]& \Ddots& & \\
 & & & & &0 \\[5pt]
 & & & & & 1\\
 0 & \Cdots& & & 0 & a_{2m-1}\\
 \end{bmatrix},
\end{align*}
where \( a_n>0\) for \( n\in\{1,\ldots,2m-1\} \). 
Let us introduce the following notation:
\begin{align*}
 &D_{m}\coloneq 
 \diag
 \left[ \begin{NiceMatrix}
 \dfrac{1}{d_{0}} &\cdots &\dfrac{1}{d_{m-1}} 
 \end{NiceMatrix} \right], &
 d_{n}&\coloneq a_{2n+1}p_{n}(x_{m,m})+p_{n+1}(x_{m,m}), 
\end{align*}
for \( n\in\{0,\ldots,m-1\} \). Note that in this tridiagonal scenario, this coincides with the \( LU\) factorization, which holds whenever all the leading principal minors are not zero; i.e., \( \det J_m \neq 0 \), \( m \in \mathbb{N} \). Using the spectral decomposition, this is equivalent to the fact that \( 0\) is not among the zeros of the orthogonal polynomials \( p_n(x) \). For example, if \( b_0=0 \), then \( \det J_0=0\) and \( 0\) is a zero of \( p_1 \). Hence, for the period \( 2\) case, such factorization does not exist. That is, the nonergodic case does not admit a stochastic factorization.

From this perspective, the shift \( J\mapsto J+sI\) translates the zeros of the orthogonal polynomials. As we know, for sufficiently large \( s \), this situation is resolved, and we can ensure that all the zeros are positive. 
The translated Jacobi matrix becomes an oscillatory matrix, and the orthogonal polynomials become \( p_n(x-s)\) with the zeros shifted by \( x_{m,n}\mapsto x_{m,n}+s \). 

\begin{pro}[Pure Birth/Pure Death Factorization]
 \label{profactorizacionestocasticaescalar}
 Let us assume that the Jacobi matrix \( J_m\) admits a PBF. Then, we have the following stochastic bidiagonal factorization of \( P_m\):
 \begin{align*}
 P_m=\Pi_m\Upsilon_m,
 \end{align*}
 with pure birth and pure death stochastic matrices given by
 \begin{align}
 \label{factorizacionbidiagonalescalarestocastica}
 \Pi_{m}&\coloneq\dfrac{1}{x_{m,m}}\sigma^{-1}_{m}L_{m}D^{-1}_{m}, &
 \Upsilon_{m}&\coloneq D_{m}U_m\sigma_{m}.
 \end{align}
\end{pro}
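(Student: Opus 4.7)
The algebraic factorization is immediate, so the real content is showing that each factor is stochastic (nonnegative with row sums one). My plan has four steps.

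First, I would verify the identity $P_m = \Pi_m \Upsilon_m$ by plugging in the definitions:
\begin{align*}
\Pi_m \Upsilon_m = \tfrac{1}{x_{m,m}} \sigma_m^{-1} L_m D_m^{-1} \cdot D_m U_m \sigma_m = \tfrac{1}{x_{m,m}} \sigma_m^{-1} (L_m U_m) \sigma_m = \tfrac{1}{x_{m,m}} \sigma_m^{-1} J_m \sigma_m = P_m,
\end{align*}
using only that $D_m$ commutes with itself through its inverse and the assumed PBF $J_m = L_m U_m$.

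Second, I would establish nonnegativity of both factors. By Perron--Frobenius (item (iii) of Theorem \ref{PerronFrobenius}) applied in the proof of Proposition \ref{propestocasticaescalar}, the entries $p_n(x_{m,m})$ may be chosen strictly positive for $n\in\{0,\ldots,m-1\}$, so $\sigma_m^{-1}$ has positive diagonal. The PBF hypothesis gives $a_n > 0$, so $L_m$ and $U_m$ are nonnegative bidiagonal matrices. The quantities $d_n = a_{2n+1} p_n(x_{m,m}) + p_{n+1}(x_{m,m})$ are then strictly positive for $n\in\{0,\ldots,m-2\}$, and for $n=m-1$ we have $d_{m-1}=a_{2m-1}p_{m-1}(x_{m,m})>0$ since $p_m(x_{m,m})=0$. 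Hence $D_m$ and $D_m^{-1}$ are well-defined with positive diagonal, making $\Pi_m$ and $\Upsilon_m$ nonnegative; their band structure (lower bidiagonal and upper bidiagonal respectively) is inherited from $L_m$ and $U_m$.

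Third, and this is the crucial step, I would check the row-sum condition $\Upsilon_m e = e$. Compute $\sigma_m e = (p_0(x_{m,m}),\ldots,p_{m-1}(x_{m,m}))^\top$, and then
\begin{align*}
(U_m \sigma_m e)_i = a_{2i-1} p_{i-1}(x_{m,m}) + p_i(x_{m,m}), \qquad i\in\{1,\ldots,m-1\},
\end{align*}
while $(U_m \sigma_m e)_m = a_{2m-1} p_{m-1}(x_{m,m})$. Here lies the key point: because $x_{m,m}$ is a zero of $p_m$, the ``missing'' $p_m(x_{m,m})$ term vanishes, so in fact $(U_m \sigma_m e)_i = d_{i-1}$ for every $i\in\{1,\ldots,m\}$. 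Multiplying on the left by $D_m$ exactly normalizes each row, giving $\Upsilon_m e = e$. This is the only place where the spectral choice $x_{m,m}$ (rather than an arbitrary positive number) is essential.

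Finally, having $\Upsilon_m e = e$ and the already established identity $\Pi_m \Upsilon_m = P_m$, together with $P_m e = e$ from Proposition \ref{propestocasticaescalar}, yields $\Pi_m e = \Pi_m(\Upsilon_m e) = (\Pi_m \Upsilon_m)e = P_m e = e$. Thus $\Pi_m$ is also stochastic. I would then remark that $\Pi_m$ is lower bidiagonal and therefore describes a pure death chain, while $\Upsilon_m$ is upper bidiagonal and describes a pure birth chain, completing the interpretation. The main obstacle throughout is really the third step, where the spectral property $p_m(x_{m,m})=0$ must be invoked to reconcile the last row sum with the definition of $d_{m-1}$.
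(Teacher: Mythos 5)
Your proof is correct and follows essentially the same route as the paper's: the algebraic identity \( \Pi_m\Upsilon_m=\frac{1}{x_{m,m}}\sigma_m^{-1}L_mU_m\sigma_m=P_m \), stochasticity of \( \Upsilon_m \) by the choice of \( D_m \), and then \( \Pi_m e=\Pi_m\Upsilon_m e=P_m e=e \). You are in fact more explicit than the paper, which merely asserts that \( D_m \) is ``defined in such a way that \( \Upsilon_m \) is stochastic,'' whereas you verify the row sums and isolate the essential use of \( p_m(x_{m,m})=0 \) in the last row; the only discrepancy is terminological, since the paper's remark labels \( \Pi_m \) as the pure birth factor and \( \Upsilon_m \) as the pure death factor, opposite to your assignment.
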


\begin{proof}
Notice that
\begin{align*}
 P_{m}&=\dfrac{1}{x_{m,m}}\sigma^{-1}_{m}J_m\sigma_{m}=\dfrac{1}{x_{m,m}}\sigma^{-1}_{m}L_{m}U_m\sigma_{m}=\dfrac{1}{x_{m,m}}\sigma^{-1}_{m}L_{m}
 D^{-1}_{m} D_{m}U_m\sigma_{m}.
\end{align*}
Here, \( D_{m}\) is defined in such a way that \( \Upsilon_{m}\) is stochastic. Hence, \( \Upsilon_m\) is stochastic by definition.
 Now, as \( P_{m}\) is stochastic we have 
\begin{align*}
\begin{bNiceMatrix}
 1 \\ \Vdots \\ 1
 \end{bNiceMatrix}
=
P_m 
\begin{bNiceMatrix}
1 \\ \Vdots \\ 1
 \end{bNiceMatrix}
= 
\Pi_m \Upsilon_m
\begin{bNiceMatrix}
1 \\ \Vdots \\ 1
 \end{bNiceMatrix}
=
\Pi_m 
\begin{bNiceMatrix}
1 \\ \Vdots \\ 1
 \end{bNiceMatrix} ,
\end{align*}
and so \( \Pi_{m}\) is stochastic. 
It should be noted that \( \Pi_{m}\) is lower bidiagonal, while \( \Upsilon_{m}\) is upper bidiagonal.
\end{proof}

\begin{rem}
Here, \( \Pi_m\) represents the transition matrix for a pure birth Markov chain, while \( \Upsilon_m\) represents the transition matrix for a pure death Markov chain. Therefore, our Markov chain is a composition of a pure death and a pure birth process.
\end{rem}

\section[Birth and death finite Markov chains in the Askey scheme]{Finite Markov chains in the Askey scheme}

We can now apply this general paradigm to the families of orthogonal polynomial descendants of Hahn in the Askey scheme \cite{askeyescalar} that possess nonnegative Jacobi matrices. In contrast to the multiple case, all the ``classical'' families, namely Hahn, Jacobi, Meixner, Laguerre, Kravchuk, Charlier, and Hermite, fulfill this requirement.
\begin{center}
 \begin{tikzpicture}[node distance=3cm]
 \node[fill=ForestGreen!35,block] (a) {Jacobi}; 
 \node[ fill=ForestGreen!35,block, right of=a] (b) {Meixner};
 \node[ fill=ForestGreen!35,block, right of = b] (c) {Kravchuk};
 \node[ fill=ForestGreen!35,block,above of =b ] (d) {Hahn};
 \node[fill=ForestGreen!35,block, below of = a] (e) {Laguerre};
 \node[ fill=ForestGreen!35,block, below of =b, below= 1cm] (f) {Hermite};
 \node[ fill=ForestGreen!35, block, below of =c] (g) {Charlier};
 \draw[-latex] (d)--(a); 
 \draw[-latex] (d)--(b);
 \draw[-latex] (d)--(c);
 \draw[-latex] (a)--(f);
 \draw[-latex] (b)--(e);
 \draw[-latex] (c)--(f);
 \draw[-latex] (c)--(g);
 \draw[-latex] (b)--(g);
 \draw[-latex] (a)--(e);
 \draw[-latex] (e)--(f);
 \draw[-latex] (g)--(f);
 \draw (3,-6) node
 {\begin{minipage}{0.5\textwidth}
 \begin{center}\small
 \textbf{Descendants of Hahn in the Askey scheme}
 \end{center}
 \end{minipage}};
 \end{tikzpicture}
\end{center}

Before we begin with the examples, it is important to note that all the polynomials we will be studying can be expressed using the generalized hypergeometric series, as described in references \cite{andrews,slater}, 
\begin{align}\label{eq:generalized_hypergeometric_series}
 \pFq{p}{q}{b_1,\ldots,b_p}{c_1,\ldots,c_q}{x}&\coloneq \sum_{l=0}^{\infty}\dfrac{(b_1)_l\cdots(b_p)_l}{(c_1)_l\cdots(c_q)_l}\dfrac{x^l}{l!}, & b_1,\ldots,b_p,c_1,\ldots, c_q\in\C,
\end{align}
which depend on the Pochhammer symbols
\begin{align}\label{eq:Pochhammer}
 (x)_n=\dfrac{\Gamma(x+n)}{\Gamma(x)}=\begin{cases}
 x(x+1)\cdots(x+n-1)&\text{if \( n\in\N \), }\\
 1 & \text{if \( n=0 \), }
 \end{cases}
\end{align}
in terms of the Eulerian gamma function.
One important property of the Pochhammer symbols is that for \( m\in\mathbb N_0\):
\( (-m)_n=
 0\) if \( m<n \). 
This implies that if any of the previous upper arguments \( b_1,\ldots,b_p\) is a negative integer, then the series is a finite sum; which is what happens in all the polynomial families.

Now, let's provide explicit examples using the explicit hypergeometric expressions and the previous results for the families of Hahn, Jacobi, Meixner, and Laguerre. These families have nonnegative Jacobi matrices. We will consider a \( 5 \)-state Markov chain and choose specific parameters for each orthogonal polynomial family. We will provide the pure birth-pure death decomposition as well as the stationary distribution and expected return times for each example.

The strategy is as follows: our goal is to obtain numerical approximations of the largest zero \( x_{m,m}\) of the orthogonal polynomial \( p_m(x) \). Using the explicit hypergeometric representation of the orthogonal polynomials and the given nonnegative expression for the recursion coefficients, we calculate numerical approximations of the associated stochastic matrix \( P_m \), the steady state, the expected return times, and the stochastic factorization into pure birth and pure death factors. To perform these computations, we 
apply a specialized Mathematica code that has been designed for this specific purpose. See Declarations section at the end of the paper.

\subsection{Hahn finite Markov chains}

The monic Hahn orthogonal polynomials \cite[Chapter \(6.2\)]{Ismail} admit the hypergeometric series expression
\begin{align*}
 Q_n(x;\alpha,\beta,N)\coloneq &\dfrac{(\alpha+1)_n(-N)_n}{(\alpha+\beta+n+1)_n}\pFq{3}{2}{-n,-x, \alpha+\beta+n+1}{-N, \alpha+1}{1}, &&
 n\in\{0,\ldots,N\} ,
\end{align*}
with \( N\in\mathbb N_0\) and \( \alpha,\beta>-1\) and satisfy the discrete orthogonality relations
\begin{align*}
 \sum_{k=0}^N (-N+k)_jQ_n(k;\alpha,\beta,N)w(k;\alpha,\beta,N)&=0, 
 & j&\in\{0,\ldots, n-1\} ,
\end{align*}
with respect to the weight function
\( w 
(x;\alpha,\beta,N) 
 =
 \dfrac{\Gamma(\alpha+x+1)}{\Gamma(x+1)\Gamma(\alpha+1)}
 \dfrac{\Gamma(\beta+N-x+1)}{\Gamma(N-x+1)\Gamma(\beta+1)} \). 
 It is important to notice that the Hahn polynomials does not exist for \( n > N \) as, for each \( N \), the Gram matrix associated with this weight is a \( (N+1) \times (N+1) \) matrix.
They satisfy a three term recurrence relation, see \eqref{recurrenciaescalar} and \eqref{eq:specralJ_dm}, with the coefficients given by
\begin{align*}
 b_n(\alpha,\beta,N)&\coloneq\begin{cases}
 a_{2n}(\alpha,\beta,N)+a_{2n+1}(\alpha,\beta,N), & \text{if \( n\leq N \), } \\
 0 & \text{if \( N<n \), }
 \end{cases}
 \\
 c_n(\alpha,\beta,N)
 &\coloneq\begin{cases}
 a_{2n-1}(\alpha,\beta,N)a_{2n}(\alpha,\beta,N),& \text{if \( n\leq N\)},\\
 0 & \text{if \( N<n \), }
 \end{cases}
\end{align*}
which in this case come already in function of the coefficients of a bidiagonal factorization of the form (\ref{factorizacionbidiagonalescalar})
\begin{align*}
 a_{2n+1}(\alpha,\beta,N)&=\dfrac{(N-n)(\alpha+n+1)(\alpha+\beta+n+1)}{(\alpha+\beta+2n+1)(\alpha+\beta+2n+2)},&
 a_{2n}(\alpha,\beta,N)&=\dfrac{n(\beta+n)(\alpha+\beta+N+n+1)}{(\alpha+\beta+2n)(\alpha+\beta+2n+1)} .
\end{align*}
Therefore, the Jacobi matrix is non negative for all values \( \alpha,\beta>-1 \). We also observe that \( b_n>0 \), 
 \( n\in
 \{ 0, \ldots , N \} \), so that these Markov chains are ergodic.

\begin{exa}
 Following (\ref{estocasticaescalar}) for the choice of parameters \( \alpha=0.5 \), \( \beta=0.75 \), \( N=5\); we get the following \( 5\times5\) stochastic matrix and its pure birth/pure death factorization 
 \eqref{factorizacionbidiagonalescalarestocastica}
 \begin{align*}
 P_5(0.5,0.75,5)&\approx
 \begin{bNiceArray}[small]{ccccc}
 0.46 & 0.54 & 0 & 0 & 0 \\
 0.18& 0.50 & 0.32 & 0 & 0 \\
 0 & 0.29 & 0.51 & 0.20 & 0 \\
 0 & 0 & 0.37& 0.52 & 0.11\\
 0 & 0 & 0 & 0.49& 0.51 \\
 \end{bNiceArray}
\approx
 \begin{bNiceArray}[small]{ccccc}
 1 & 0 & 0 & 0 & 0 \\
 0.39 & 0.61 & 0 & 0 & 0 \\
 0 & 0.60 & 0.40 & 0 & 0 \\
 0 & 0 & 0.76 & 0.24 & 0 \\
 0 & 0 & 0 & 0.94& 0.06 \\
 \end{bNiceArray}
 \begin{bNiceArray}[small]{ccccc}
 0.46 & 0.54 & 0 & 0 & 0 \\
 0 & 0.48 & 0.52 & 0 & 0 \\
 0 & 0 & 0.49 & 0.51 & 0 \\
 0 & 0 & 0 & 0.52 & 0.48 \\
 0 & 0 & 0 & 0 & 1 \\
 \end{bNiceArray}.
 \end{align*}
 The corresponding steady state \eqref{estacionarioescalar} is
 \begin{align*}
 \pi_5(0.5,0.75, 
 5 
 ) \approx
\begin{bNiceMatrix}
 0.11 & 0.31 & 0.35 & 0.19 & 0.04
 \end{bNiceMatrix}.
 \end{align*}
 The expected return times are
 \begin{align*}
 (\bar t_5)_1&\approx 9.09, & (\bar t_5)_2&\approx 3.23, & (\bar t_5)_3&\approx 2.86, & (\bar t_5)_4&\approx 5.26,& (\bar t_5)_5&\approx 25.00.
 \end{align*}
 \begin{center}
 \tikzset{decorate sep/.style 2 args={decorate,decoration={shape backgrounds,shape=circle,shape size=#1,shape sep=#2}}}
 \begin{tikzpicture}[start chain = going right,
 -Triangle, every loop/.append style = {-Triangle}]
 \foreach \i in {1,...,5}
 \node[state, on chain] (\i) {\i};
 \foreach
 \i/\txt in {1/\(0.54\),2/\(0.32\)/,3/\(0.20\),4/\(0.11\)}
 \draw let \n1 = { int(\i+1) } in
 (\i) edge[bend left,"\txt",color=Periwinkle] (\n1);
 \foreach
 \i/\txt in {1/\(0.18\),2/\(0.29\)/,3/\(0.37\),4/\(0.49\)}
 \draw let \n1 = { int(\i+1) } in
 (\n1) edge[bend left,above, "\txt",color=Mahogany,auto=right] (\i); 
 \foreach \i/\txt in {1/\(0.46\),2/\(0.50\)/,3/\(0.51\),4/\(0.52\),5/\(0.51\)}
 \draw (\i) edge[loop above,color=NavyBlue, "\txt"] (\i);
 
 \end{tikzpicture}
 \begin{tikzpicture}
 \draw (4,-1.8) node
 {\begin{minipage}{0.8\textwidth}
 \begin{center}\small
 \textbf{Hahn\( (0.5,0.75, 
 5 
)\) Markov chain diagram}
 \end{center}
 \end{minipage}};
 \end{tikzpicture}
 \end{center}
\end{exa}

\subsection{Jacobi finite Markov chains}

The Jacobi orthogonal 
polynomials 
usually appear defined over the interval \( [-1,1]\) satisfying orthogonality relations with respect to the weight 
function \( (1+x)^\alpha(1-x)^\beta \), see \cite[Chapter \(4\)]{Ismail}. However, here we are going to take the modified version satisfying the orthogonality over the interval \( [0,1] \). This is achieved
by the shift \( x\mapsto y=x+1\) and subsequent rescaling \( y\mapsto z=2y \), that transforms the measure according to \( (1+x)^\alpha(1-x)^\beta\d x\mapsto 2^{\alpha+\beta+1}z^\alpha(1-z)^\beta\d z \). These modifications transform the Jacobi matrix entries 
\( J_{n,m}\mapsto (J_{n,m}+\delta_{n,m})2^{m-n-1} \), so that 
\( b_n\mapsto \frac{1}{2}(b_n+1)\) and \( c_n\mapsto \frac{1}{4}c_n\).

 The hypergeometric expression for the monic Jacobi orthogonal polynomials is:
\begin{align*}
 P_n(x;\alpha,\beta)&\coloneq
 (-1)^n\dfrac{(\alpha+1)_n}{(\alpha+\beta+n+1)_n}\,\pFq{2}{1}{-n,\alpha+\beta+n+1}{\alpha+1}{x},
\end{align*}
for \( n\in\mathbb N_0 \), with \( \alpha,\beta>-1 \). The corresponding orthogonality relations
\begin{align*}
 \int_{0}^{1}x^jP_n(x;\alpha,\beta)w(x;\alpha,\beta)\d x&=0, 
\end{align*}
for \( j\in\{0,\ldots,n-1\} \), with respect to the given weight function
\( w 
 (x;\alpha,\beta)=
 x^{\alpha}(1-x)^\beta \). 
A three term recurrence relation, see \eqref{recurrenciaescalar} and \eqref{eq:specralJ_dm}, with coefficients
\begin{align*}
 b_n(\alpha,\beta)&=\dfrac{1}{2}-\dfrac{\beta^2-\alpha^2}{2(2n+\alpha+\beta)(2n+\alpha+\beta+2)},&
 c_n(\alpha,\beta)&=\dfrac{n(n+\alpha)(n+\beta)(n+\alpha+\beta)}{(2n+\alpha+\beta-1)(2n+\alpha+\beta)^2(2n+\alpha+\beta+1)},
\end{align*}
is fulfilled.
Hence, as not all the \( b_n\) can be zero, the chain is ergodic. 
\begin{rem}
 In the standard version for the Jacobi polynomials 
 on \( [-1,1]\) we have 
 \begin{align*}
 b_n(\alpha,\beta)&=\dfrac{\alpha^2-\beta^2}{(2n+\alpha+\beta)(2n+\alpha+\beta+2)},&
 c_n(\alpha,\beta)&=\dfrac{4n(n+\alpha)(n+\beta)(n+\alpha+\beta)}{(2n+\alpha+\beta-1)(2n+\alpha+\beta)^2(2n+\alpha+\beta+1)}.
 \end{align*}
Notice that for the ultraspherical (or Gegenbauer) orthogonal polynomials in \( [-1,1]\) (\( \alpha=\beta\)) we have that \( b_n=0 \), \( n\in\N_0 \), so that the corresponding Markov chain has period \( 2 \). This, in particular, includes the Chebyshev polynomials \( \alpha=\beta=-\frac{1}{2}\) corresponding to gambler ruin Markov chain.
It also admits a positive bidiagonal factorization of the form \eqref{factorizacionbidiagonalescalar} with
\begin{align*}
 a_{2n+1}(\alpha,\beta)&=\dfrac{(\alpha+n+1)(\alpha+\beta+n+1)}{(\alpha+\beta+2n+1)_2}, &
 a_{2n}(\alpha,\beta)&=\dfrac{n(\beta+n)}{(\alpha+\beta+2n)_2}.
\end{align*}
In the standard version, i.e., supported on \( [-1,1] \), the Gegenbauer Markov chains do not possess a stochastic pure death/pure birth factorization.
\end{rem}

\begin{exa}
 Following \eqref{estocasticaescalar} for the election of parameters \( \alpha=0.5 \), \( \beta=0.75\); we get the following \( 5\times5\) stochastic matrix and its corresponding pure birth/pure death factorization \eqref{factorizacionbidiagonalescalarestocastica}
 \begin{align*}
 P_5(0.5,0.75)\approx&
 \begin{bNiceArray}[small]{ccccc}
 0.50 & 0.50& 0 & 0 & 0 \\
 0.14 & 0.53 & 0.33& 0 & 0 \\
 0 & 0.22& 0.54 & 0.24& 0 \\
 0 & 0 & 0.30 & 0.54 & 0.16\\
 0 & 0 & 0 & 0.46& 0.54 \\
 \end{bNiceArray}
\approx
 \begin{bNiceArray}[small]{ccccc}
 1 & 0 & 0 & 0 & 0 \\
 0.28 & 0.72& 0 & 0 & 0 \\
 0 & 0.40 & 0.60 & 0 & 0 \\
 0 & 0 & 0.50 & 0.50& 0 \\
 0 & 0 & 0 & 0.68 & 0.32 \\
 \end{bNiceArray}
 \begin{bNiceArray}[small]{ccccc}
 0.50 & 0.50 & 0 & 0 & 0 \\
 0 & 0.55 & 0.45 & 0 & 0 \\
 0 & 0 & 0.60 & 0.40 & 0 \\
 0 & 0 & 0 & 0.68 & 0.32\\
 0 & 0 & 0 & 0 & 1 \\
 \end{bNiceArray}.
 \end{align*}

 It is also possible to check that its corresponding steady state \eqref{estacionarioescalar} is
 \begin{align*}
 \pi_5(0.5,0.75)\approx \begin{bNiceMatrix}
 0.06& 
 0.23 &
 0.34&
 0.27&
 0.10
 \end{bNiceMatrix}.
 \end{align*}
 The average number of steps to return are
 \begin{align*}
 (\bar t_5)_1&\approx 15.93, & (\bar t_5)_2&\approx 4.41, & (\bar t_5)_3&\approx 2.94, & (\bar t_5)_4&\approx 3.64,& (\bar t_5)_5&\approx 10.46.
 \end{align*}
 \begin{center}
 \tikzset{decorate sep/.style 2 args={decorate,decoration={shape backgrounds,shape=circle,shape size=#1,shape sep=#2}}}
 \begin{tikzpicture}[start chain = going right,
 -Triangle, every loop/.append style = {-Triangle}]
 \foreach \i in {1,...,5}
 \node[state, on chain] (\i) {\i};
 \foreach
 \i/\txt in {1/\( 0.50 \), 2/\( 0.33\)/,3/\( 0.24 \), 4/\( 0.16\)}
 \draw let \n1 = { int(\i+1) } in
 (\i) edge[bend left,"\txt",color=Periwinkle] (\n1);
 \foreach
 \i/\txt in {1/\( 0.14 \), 2/\( 0.22 \), 3/\( 0.30 \), 4/\( 0.46\)}
 \draw let \n1 = { int(\i+1) } in
 (\n1) edge[bend left,above, "\txt",color=Mahogany,auto=right] (\i); 
 \foreach \i/\txt in {1/\( 0.50 \), 2/\( 0.53\)/,3/\( 0.54 \), 4/\( 0.54 \), 5/\( 0.54\)}
 \draw (\i) edge[loop above,color=NavyBlue, "\txt"] (\i);
 
 \end{tikzpicture}
 \begin{tikzpicture}
 \draw (4,-1.8) node
 {\begin{minipage}{0.8\textwidth}
 \begin{center}\small
 \textbf{Jacobi\( (0.5,0.75)\) Markov chain diagram}
 \end{center}
 \end{minipage}};
 \end{tikzpicture}
 \end{center}
\end{exa}

\subsection{Meixner finite Markov chains}

The monic Meixner orthogonal polynomials \cite[Chapter \(6.1\)]{Ismail} admit the hypergeometric series expression
\begin{align*}
 M_n(x;\beta,c)&=\left(\dfrac{c}{c-1}\right)^n(\beta)_n\,\pFq{2}{1}{-n,-x}{\beta}{\dfrac{c-1}{c}},
\end{align*}
for \( n\in\mathbb N_0 \), with \( \beta>0\) and \( 0<c<1\); and satisfy the discrete orthogonality relations
\begin{align*}
 \sum_{k=0}^{\infty}(k+\beta)_jM_n(k;\beta,c)w(k;\beta,c)&=0, && j\in\{0,\ldots,n-1\} ,
\end{align*}
with respect to the given weight function
\( w 
 (x;\beta,c)
=\dfrac{\Gamma(\beta+x)}{\Gamma(\beta)}\dfrac{c^x}{\Gamma(x+1)} \). 
These polynomials satisfy a three term recurrence relation, see \eqref{recurrenciaescalar} and \eqref{eq:specralJ_dm}, with coefficients
\begin{align*}
 &b_n(\beta,c)=\dfrac{n+(\beta+n)c}{1-c};
 &c_n(\beta,c)=\dfrac{nc(\beta+n-1)}{(1-c)^2}.
\end{align*}
Therefore, the chain is ergodic. It also admits a bidiagonal factorization of the form \eqref{factorizacionbidiagonalescalar} with
\begin{align*}
 a_{2n+1}(\beta,c)&=\dfrac{(\beta+n)c}{1-c}, &
 a_{2n}(\beta,c)&=\dfrac{n}{1-c}.
\end{align*}
They are positive for \( \beta>0\) and \( 0<c<1 \). We see that all these Markov chains are ergodic.

\subsection{Kravchuk finite Markov chains}
The monic Kravchuk orthogonal polynomials are \cite[\S 6.2]{Ismail}
\begin{align*}
 K_n(x;p,N)&=p^n(-N)_n\,\pFq{2}{1}{-n,-x}{-N}{\dfrac{1}{p}},
\end{align*}
for \( n\in\{0,\cdots,N\} \), with \( 0<p<1\) and \( N\in\mathbb N_0\); and satisfy the orthogonality relations
\begin{align*}
 \sum_{k=0}^{N}(-N+k)_jK_n(k;p,N)w(k;p,N)&=0, & j&\in\{0,\ldots,n-1\} ,
\end{align*}
with respect to the given weight function
\( w 
 (x;p,N) 
=\dfrac{\Gamma(N+1)}{\Gamma(x+1)\Gamma(N-x+1)}p^x(1-p)^{N-x} \). 
The three term recurrence relation coefficients, see \eqref{recurrenciaescalar} and \eqref{eq:specralJ_dm}, are
\begin{align*}
 b_n(p,N)&=(N-n)p+n(1-p), &
 c_n(p,N)&=n(1-p)(N-n+1)p,
\end{align*}
which are always nonnegative, 
and the coefficients of the stochastic bidiagonal factorization \eqref{factorizacionbidiagonalescalar} are
\begin{align*}
 a_{2n+1}(p,N)&=(N-n)p, &
 a_{2n}(p,N)&=n(1-p).
\end{align*}
The Markov chain is ergodic.
\begin{rem}
Now, if we set \( p=1/2 \), we find that the diagonal coefficients of the Jacobi matrix become uniform, i.e., \( b_n=N/2 \). Consequently, we can consider shifted Kravchuk orthogonal polynomials with
 \( b_n=0 \), describing a period-\( 2\) recurrent reversible Markov chain. While these chains are neither ergodic nor positive, the steady state is still an equilibrium state. Moreover, in such cases, there is no stochastic factorization available.
\end{rem}

\subsection{Laguerre finite Markov chains}
The monic Laguerre orthogonal polynomials \cite[\S 4.6]{Ismail} have the following hypergeometric series representation 
\begin{align*}
 L_n(x;\alpha)&=(-1)^n(\alpha+1)_n\,\pFq{1}{1}{-n}{\alpha+1}{x}, 
\end{align*}
for \( n\in\mathbb N_0 \), with \( \alpha>-1 \), and satisfy the orthogonality relations
\begin{align*}
 \int_{0}^{\infty}x^jL_n(x;\alpha)w(x;\alpha)\d x&=0, & j&\in \{0,\ldots,n-1\} ,
\end{align*}
with respect to the given weight function
\( w 
 (x;\alpha) 
=x^\alpha \Exp{-x} \). 
The three term recurrence relation coefficients, see \eqref{recurrenciaescalar} and \eqref{eq:specralJ_dm}, are
\begin{align*}
 b_n(\alpha)&=2n+\alpha+1, & c_n(\alpha)&=n(n+\alpha),
\end{align*}
which are nonnegative for \( \alpha>-1 \), 
and the coefficients of the stochastic bidiagonal factorization \eqref{factorizacionbidiagonalescalar} are
\begin{align*}
 a_{2n+1}(\alpha)&=\alpha+n+1, & a_{2n}(\alpha)&=n.
\end{align*}
The Markov chain is ergodic.

\subsection{Charlier finite Markov chains}
The monic Charlier orthogonal polynomials are \cite[\S 6.1]{Ismail}
\begin{align*}
 C_n(x;b)=(-b)^n\pFq{2}{0}{-n,-x}{--}{-\dfrac{1}{b}} ,
\end{align*}
for \( n\in\N_0 \), with \( b > 0 \), and satisfy the orthogonality relations
\begin{align*}
 \sum_{k=0}^{\infty}k^jC_n(k;b)w(k;b)&=0, & j\in\{0,\ldots,n-1\} ,
\end{align*}
with respect to the given weight function
\( w 
 (x;b)
= \dfrac{b^x}{\Gamma(x+1)} \). 
The three term recurrence relation coefficients, see \eqref{recurrenciaescalar} and \eqref{eq:specralJ_dm}, are
\begin{align*}
 b_n(b)&=n+b, & c_n(b)&=nb,
\end{align*}
which are always nonnegative, and the coefficients of the stochastic bidiagonal factorization \eqref{factorizacionbidiagonalescalar} are
\begin{align*}
 a_{2n+1}(b)&=b, & a_{2n}(b)&=n.
\end{align*}
The Markov chain is ergodic.

\subsection{Hermite finite Markov chains}
The monic Hermite orthogonal polynomials are \cite[\S 4.6]{Ismail}
\begin{align*}
 H_n(x)&=(-1)^n\sqrt{\pi}\sum_{l=0}^n\dfrac{(-n)_l}{l!}\dfrac{1}{\Gamma\left(\frac{-n+1+l}{2}\right)}x^l=x^n\pFq{2}{0}{-\frac{n}{2},-\frac{n-1}{2}}{--}{-\dfrac{1}{x^2}},
\end{align*}
for \( n\in\N_0 \), and satisfy the orthogonality relations
\begin{align*}
 \int_{-\infty}^{\infty}x^jH_n(x)w(x)\d x&=0, &j&\in\{0,\ldots,n-1\} ,
\end{align*}
with respect to the given weight which is the Gaussian function
\( w (x) = \Exp{-x^2} \). 
The three term recurrence relation coefficients, see \eqref{recurrenciaescalar} and \eqref{eq:specralJ_dm}, are
\begin{align*}
 b_n&=0, &
 c_n&=\dfrac{n}{2},
\end{align*}
The corresponding recurrent reversible Markov chain is periodic with period \( 2 \), so that the chain is not ergodic. However, the steady state is an equilibrium state and the convergence to it is geometric. Moreover, there is no pure birth/pure death stochastic factorization. 
 However, any shift \( J_m\mapsto J_m+s I_m\) with \( s> x_{m,m}\) will have such a factorization.

\section{Multiple orthogonal polynomials and finite Markov chains beyond birth and death}

We now turn our attention to multiple orthogonal polynomials. This extended form of orthogonality has found applications in various areas, including simultaneous approximation theory, number theory, random matrices, and Brownian motion. In our case, it also proves useful for constructing Markov chains that go beyond birth and death processes.

For further information on multiple orthogonal polynomials, we recommend consulting \cite{Ismail}, \cite{nikishin_sorokin}, \cite{afm}, and~\cite{bfm}.

\subsection{Multiple orthogonal polynomials}
Now, let's consider two weight functions \( w_1 ,w_2:\Delta\subseteq\mathbb R\rightarrow\R_{\geq 0} \), a measure \( \mu :\Delta\subseteq\mathbb R\rightarrow\R_{\geq 0} \), and a sequence of multi-indices 
\begin{align*}
 \lbrace{(n_1,n_2)}\rbrace_{(n_1,n_2)}\subseteq\mathbb N_0^2.
\end{align*}
 We are interested in finding a sequence of polynomials, referred to as type II multiple orthogonal polynomials, denoted by \( \lbrace{B_{(n_1,n_2)}}\rbrace \), where \( \deg B_{(n_1,n_2)}\leq n_1+n_2 \), that satisfy the orthogonality relations:
\begin{align*}
 \int_{\Delta}x^jB_{(n_1,n_2)}(x)w_i(x)\d\mu(x)&=0, 
	 & j&\in\{0,\ldots,n_i-1\}, & i&\in\{1,2\} . 
\end{align*}
Additionally, we have two sequences of polynomials, known as type I multiple orthogonal polynomials, denoted by \( \lbrace{A_{(n_1,n_2),i}}\rbrace_{i=1,2} \), where \( \deg A_{(n_1,n_2),i}\leq n_i-1 \), that satisfy the orthogonality relations:
\begin{align*}
 \int_{\Delta}x^j{\left(A_{(n_1,n_2),1}(x)w_1(x)+A_{(n_1,n_2),2}(x)w_2(x)\right)}\d\mu(x)&=0, 
 & j &\in \{0,\ldots,n_1+n_2-2\} .
\end{align*}
Similarly, when we have a system of two weight functions \( w_1 ,w_2:\Delta\subseteq\mathbb Z\rightarrow\mathbb R^+ \), two discrete measures \( \mu _i=\sum_{k\in\Delta}w_i(k)\delta(x-k) \), and a sequence of multi-indices \( \lbrace{(n_1,n_2)}\rbrace_{(n_1,n_2)}\subseteq\mathbb N_0^2 \), it is possible to have the existence of the same sequences of polynomials satisfying discrete orthogonality relations of the form:
\begin{align*}
 \sum_{k\in\Delta}k^jB_{(n_1,n_2)}(k)w_i(k)&=0,
 & j&\in\{0,\ldots,n_i-1\} ,
 & i&\in\{1,2\} ,
\end{align*}
and 
\begin{align*}
 \sum_{k\in\Delta}k^j{\left(A_{(n_1,n_2),1}(k)w_1(k)+A_{(n_1,n_2),2}(k)w_2(k)\right)}&=0,
& j\in&\{0,\ldots,n_1+n_2-2\}.
\end{align*}
A multi-index \( (n_1,n_2)\) is normal if \(B_{(n_1,n_2)}\) has degree \(n_1 +n_2\) or, equivalently, 
\( A_{(n_1,n_2),1}\) and \( A_{(n_1,n_2),2}\) can be normalized so that
\begin{align*}
\int_{\Delta} x^{n_1+n_2 -1} \left( A_{(n_1,n_2),1} w_1 (x) +A_{(n_1,n_2),2} w_2 (x) \right) \d \mu (x) = 1.
\end{align*}
A system is considered ``perfect'' if every multi-index is normal. In this study, we assume the use of AT systems, as described in \cite{Ismail} and \cite{nikishin_sorokin}, which guarantees perfectness.
In the case of finite sequences of polynomials we are interested in the normality til the maximum order of the multi-index, that is
we will focus on a specific set of multi-indices called the ``stepline,'' defined as follows:
\begin{align*}
 \lbrace{(n_1,n_2)}\rbrace_{(n_1,n_2)} = \lbrace{(0,0),(1,0),(1,1),(2,1),(2,2),\ldots}\}, \quad n_1+n_2 \leq N,
\end{align*}
here \( N\) can be taken as any ``finite'' natural number or as an infinite one.

For each \( n\in\mathbb{N}_0 \), we denote the corresponding polynomials as follows:
\begin{align*}
\begin{aligned}
 B^{(2n)} &\coloneq B_{(n,n)}, & B^{(2n+1)} &\coloneq B_{(n+1,n)}, \\
 A^{(2n)}_i &\coloneq A_{(n+1,n),i}, & A_i^{(2n-1)} &\coloneq A_{(n,n),i}, & i&\in\{1,2\}.
\end{aligned}
\end{align*}

\subsection{Recursion and stochastic matrices}

In the case of both continuous and discrete type II polynomials, they satisfy a four-term recurrence relation of the form:
\begin{align}
 \label{recurrenciaII}
 xB^{(n)}(x) &= B^{(n+1)}(x) + b_{n}B^{(n)}(x) + c_{n}B^{(n-1)}(x) + d_{n}B^{(n-2)}(x), 
\end{align}
for \( n\in\{0,\ldots, N-1\} \), 
where \( b_n, c_n \), and \( d_n\) are real coefficients with \( d_n\neq 0 \). We set \( B^{(-1)}=B^{(-2)}=0\) for convenience, so we do not require \( c_0, d_0 \), and \( d_1 \), which are also taken as \( 0 \). This recurrence relation can be represented in matrix form as:
\begin{align*}
\begin{aligned}
 T
 \begin{bmatrix}
 B^{(0)}(x)\\ B^{(1)}(x)\\ B^{(2)}(x)\\ \Vdots[shorten-end=0pt] 
 \end{bmatrix}
 &= x
 \begin{bmatrix}
 B^{(0)}(x)\\ B^{(1)}(x)\\ B^{(2)}(x)\\ \Vdots[shorten-end=0pt] 
 \end{bmatrix},
 &
 T& \coloneqq
 \begin{bmatrix}
 b_0 & 1 & 0 &\Cdots[shorten-end=5pt] & \\
 c_1 & b_1 & 1& \Ddots[shorten-end=5pt] & \\
 d_2 & c_2 & b_2 & \Ddots[shorten-end=5pt] & \\[2pt]
 0 & d_3 & c_3 & b_3 & \\
 \Vdots[shorten-start=7pt,shorten-end=7pt] & \Ddots[shorten-end=-1pt] & \Ddots[shorten-end=-1pt] & \Ddots[shorten-end=-2pt] & \Ddots[shorten-end=10pt] 
 \end{bmatrix},
\end{aligned}
\end{align*}
for an infinite sequence. For a finite sequence or a truncation, we have:
\begin{align}
 \label{recurrenciaIImatricial} T_m
 \begin{bmatrix}
 B^{(0)}(x)\\ B^{(1)}(x)\\ \Vdots\\ B^{(m-1)}(x)
 \end{bmatrix}
 &= x
 \begin{bmatrix}
 B^{(0)}(x)\\ B^{(1)}(x)\\ \Vdots\\ B^{(m-1)}(x)
 \end{bmatrix}
 - \begin{bmatrix}
 0\\ \Vdots\\ 0\\ 
 B^{(m)}(x)
 \end{bmatrix},
\end{align}
with
\begin{align}
 T_m &\coloneqq
 \begin{bmatrix}[columns-width=.7cm]
 b_0 & 1 & 0 & \Cdots & & & 0\\
 c_1 & b_1 & 1 & \Ddots[shorten-end=1pt] & & & \Vdots\\[5pt]
 d_2 & c_2 & b_2 & \Cdots & & & \\
 0 & d_3 & c_3 &\Ddots & \Ddots[shorten-end=1pt]& & \\
 \Vdots & \Ddots[shorten-end=5pt] &\Ddots[shorten-end=0pt] & \Ddots[shorten-end=1.5pt] & & & 0\\[5pt]
 & & & & & & 1\\
 0 & \Cdots & & 0& d_{m-1} & c_{m-1} & b_{m-1}
 \end{bmatrix},
\end{align}
where \( m\in\{1,\ldots,N\} \). 

We will require that these recurrence Hessenberg matrix \( T \) or \( T_m\) to be nonnegative matrices. Hence, all \( b_n,c_n\geq 0\) and \( d_n>0 \). 
These matrices are irreducible since all entries \( d_n\) are positive. 

Let \( \lbrace{x_{n,i}}\rbrace_{i=1}^n\) represent the set of increasing zeros of the \( n\)-th type II polynomial \( B^{(n)} \). Since our system is assumed to be AT, we know that all zeros are simple and located in the interior of \( \Delta \). Moreover, the zeros of \( B^{(n-1)}\) interlace with the zeros of \( B^{(n)} \), i.e.:
\begin{align*}
 x_{n,i-1} &< x_{n-1,i-1} < x_{n,i}, & n &\in \{2,\ldots,N\}, & i &\in \{2,\ldots,n\}.
\end{align*}

From Equation \eqref{recurrenciaIImatricial}, we deduce that the eigenvalues of \( T_m\) are the zeros of \( B^{(m)} \), denoted as \( \lbrace{x_{m,i}}\rbrace_{i=1}^m \). The corresponding right eigenvectors are given by:
\begin{align}
 \label{eigenvectorrightmultiple}
& \begin{bmatrix}
 B^{(0)}(x_{m,i})\\
 \Vdots\\
 B^{(m-1)}(x_{m,i})
 \end{bmatrix}, 
\end{align}
for \( i \in \{1,\ldots,m\} \). 
Now, we can proceed as we did in the previous section. Let us define the diagonal matrix:
\begin{align*}
 \sigma_{II,m} &\coloneq \diag
 \left[
\begin{NiceMatrix}
 b_m^{(0)} & \cdots & b^{(m-1)}_m
\end{NiceMatrix}\right]
 , &
 b^{(n)}_m&
\coloneq B^{(n)}(x_{m,m}) .
\end{align*}

\begin{teo}
 \label{propestocasticaII}
 Consider the recurrence relation expressed in Equation \eqref{recurrenciaIImatricial} with \( T_m\) as a nonnegative matrix and \( \lbrace{x_{m,i}}\rbrace_{i=1}^m\) as the set of increasing zeros of \( B^{(m)} \), where \( x_{m,m}>0 \). Then, the following matrix is stochastic
 \begin{align}
 \label{estocasticaII}
 P_{II,m} &\coloneq \dfrac{1}{x_{m,m}} \sigma_{II,m}^{-1} T_m \sigma_{II,m}.
 \end{align}
\end{teo}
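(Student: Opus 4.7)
The plan is to mirror the proof of Proposition~\ref{propestocasticaescalar}, verifying the two defining properties of a stochastic matrix -- entrywise nonnegativity~\eqref{nonegatividad} and unit row sums~\eqref{estocasticidad} -- for the conjugated and rescaled matrix $P_{II,m}=x_{m,m}^{-1}\sigma_{II,m}^{-1}T_m\sigma_{II,m}$.

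The row sum identity is the easy half. Evaluating the matrix recurrence~\eqref{recurrenciaIImatricial} at $x=x_{m,m}$ kills the boundary column $[0,\ldots,0,B^{(m)}(x_{m,m})]^{\top}$, since $x_{m,m}$ is a zero of $B^{(m)}$; what remains reads $T_m(\sigma_{II,m}e)=x_{m,m}(\sigma_{II,m}e)$, where $e$ denotes the all-ones vector and $\sigma_{II,m}e$ is exactly the right eigenvector displayed in~\eqref{eigenvectorrightmultiple}. Left-multiplying by $\sigma_{II,m}^{-1}$ and dividing by $x_{m,m}>0$ yields $P_{II,m}e=e$.

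The main obstacle is entrywise nonnegativity. Because $T_m\ge 0$ and $x_{m,m}>0$ by hypothesis, the conjugation $\sigma_{II,m}^{-1}T_m\sigma_{II,m}$ is nonnegative precisely when the diagonal entries $b_m^{(0)},\ldots,b_m^{(m-1)}$ of $\sigma_{II,m}$ share a common sign. My intended route is to identify $x_{m,m}$ with the Perron--Frobenius eigenvalue of $T_m$ and then read off the sign from Theorem~\ref{PerronFrobenius}. The Hessenberg matrix $T_m$ is nonnegative and irreducible (all subdiagonal entries $d_n$ are strictly positive), so by items~(iii)--(iv) of Theorem~\ref{PerronFrobenius} the PF eigenvalue is simple and positive, and its eigenvector is the only one, up to scaling, whose components can all be chosen of one sign.

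To carry out this identification I would exploit the AT-system structure. The spectrum of $T_m$ is the set of real zeros $\{x_{m,1}<\cdots<x_{m,m}\}$ of $B^{(m)}$, and the interlacing inequalities recorded just before~\eqref{eigenvectorrightmultiple}, applied iteratively, force the sequence $x_{1,1}<x_{2,2}<\cdots<x_{m,m}$ to be strictly increasing. Consequently $x_{m,m}$ lies strictly to the right of every zero of each monic polynomial $B^{(n)}$ with $0\le n<m$ (monicity follows by induction from the recurrence~\eqref{recurrenciaII} with $B^{(0)}\equiv 1$), so $B^{(n)}(x_{m,m})>0$ for all such $n$. The right eigenvector $\sigma_{II,m}e$ is therefore componentwise positive, Theorem~\ref{PerronFrobenius}(iv) identifies $x_{m,m}$ with the PF eigenvalue, and the conjugation $\sigma_{II,m}^{-1}T_m\sigma_{II,m}$ -- carried out by a positive diagonal matrix -- preserves the nonnegativity of $T_m$. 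Dividing by $x_{m,m}>0$ completes the verification that $P_{II,m}$ is stochastic.
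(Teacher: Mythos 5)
Your proof is correct and follows essentially the same route as the paper: the row-sum identity comes from evaluating the recurrence \eqref{recurrenciaIImatricial} at the zero \( x_{m,m} \), and nonnegativity reduces to the common sign of the components \( B^{(n)}(x_{m,m}) \) of the associated eigenvector. The only difference is that you establish the positivity of these components directly from the iterated interlacing of zeros (which, as the paper itself observes in the remark following the theorem, renders the appeal to Perron--Frobenius strictly unnecessary), whereas the paper's proof asserts that \( x_{m,m} \) is the spectral radius and invokes item (iii) of Theorem \ref{PerronFrobenius}.
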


\begin{proof}
 We need to verify that \( P_{II,m}\) is nonnegative and that the sum of each row equals \( 1 \), satisfying conditions~\eqref{nonegatividad} and \eqref{estocasticidad}. Firstly, it is straightforward to check that \( P_{II,m}e=e\) from the definition of \( P_{II,m} \). Therefore, \( P_{II,m}\) satisfies \eqref{estocasticidad}. Secondly, since \( T_m\) is nonnegative by hypothesis and \( x_{m,m}>0 \), the only condition to ensure the nonnegativity of \( P_{II,m}\) is that \( B^{(0)}(x_{m,m}),\ldots,B^{(m-1)}(x_{m,m})\) all have the same sign. Notice that these are precisely the components of the right eigenvector of \( T_m\) associated with the eigenvalue \( x_{m,m} \), which is also the spectral radius of \( T_m\) (see Definition \ref{defradioespectral}). Since \( T_m\) is nonnegative and irreducible, the Perron--Frobenius Theorem applies, and we conclude that 
\( B^{(0)}(x_{m,m}),\ldots,B^{(m-1)}(x_{m,m})\)
 have the same sign, as stated in (iii) of Perron--Frobenius Theorem \ref{PerronFrobenius}. Thus, \( P_{II,m}\) satisfies \eqref{nonegatividad}
 .
\end{proof}

The stochastic matrix \( P_{II,m}\) explicitly reads:
 \begin{align*}
P_{II,m}= \frac{1}{x_{m,m}} \begin{bNiceMatrix}
 b_0 & \frac{b^{(1)}_m}{b_m^{(0)}} & 0 & \Cdots & & && 0\\
 \frac{b^{(0)}_m}{b^{(1)}_m}c_1 & b_1 & \frac{b^{(2)}_m}{b^{(1)}_m} & \Ddots & & & &\Vdots\\
 \frac{b^{(0)}_m}{b^{(2)}_m}d_2 & \frac{b^{(1)}_m}{b^{(2)}_m}c_2 & b_2 & \Ddots& &&& \\
 0 & \Ddots[shorten-end=-25pt]& \Ddots& \Ddots& && & \Vdots\\
 \Vdots &\Ddots[shorten-end=-30pt]&&&&&&\\
 & & & & & & & 0\\
 & & & && & & \frac{b^{(m-1)}_m}{b^{(m-2)}_m}\\
 0 & \Cdots[shorten-end=-30pt]&& &\hspace*{30pt}0 & \frac{b^{(m-3)}_m}{b^{(m-1)}_m}d_{m-1} & \frac{b^{(m-2)}_m}{b^{(m-1)}_m}c_{m-1} &
 b_{m-1}
 \end{bNiceMatrix}
.
\end{align*}
\begin{rem}
Notice that the Perron--Frobenius theorem is (in fact) not needed 
for the proof (as the zeros interlace and we take the maximal one).
 If we had chosen any other zero of \( B^{(m)}\) to construct \( P_{II,m} \), non-negativity would not be achievable due to (iv) of the Perron--Frobenius Theorem \ref{PerronFrobenius}.
\end{rem}

\begin{rem}
 We refer to the stochastic matrix \( P_{II,m}\) as of type II since it is constructed using multiple orthogonal polynomials of type II. These type II stochastic matrices describe an \(m\)-state Markov chain where there is a possibility of going up to two steps backwards, but only one step forward at each time.
\end{rem}

Now, we replace the type II multiple orthogonal polynomials with type I multiple orthogonal polynomials. As a result, we obtain what we call type I stochastic matrices. 

\begin{rem}
It is important to notice that although the recurrence matrices are, by definitions, transpose of each other, the corresponding stochastic ones do not share this property.
\end{rem}

In this case, we start with the recurrence relation satisfied by the type I polynomials:
\begin{align}
 \label{recurrenciaI}
 xA_i^{(n)}(x) &= A_i^{(n-1)}(x) + b_{n}A_i^{(n)}(x) + c_{n+1}A_i^{(n+1)}(x) + d_{n+2}A_i^{(n+2)}(x), 
\end{align}
for \( i \in\{1,2\}\) and \( n\in\{0,\ldots,N-3\} \). 
Here, \( A_1^{(-1)}=A_2^{(-1)}=0 \). It can be observed that this recurrence relation is exactly the transposed version of the type II polynomials in Equation \eqref{recurrenciaII}. In matrix notation, it can be written as:
\begin{align*}
 T^\top
 \begin{bNiceMatrix}
 A_i^{(0)}(x)\\[4pt] A_i^{(1)}(x)\\[4pt] A_i^{(2)}(x) \\ \Vdots[shorten-end=2pt]
 \end{bNiceMatrix}
 &=x \begin{bNiceMatrix}
 A_i^{(0)}(x)\\[4pt] A_i^{(1)}(x)\\[4pt] A_i^{(2)}(x) \\ \Vdots[shorten-end=2pt]
 \end{bNiceMatrix},
\end{align*}
for \( i \in\{1,2\} \), if the sequence is infinite. Here, \( T^\top\) denotes the transposed matrix of \( T \). If the sequence is finite or truncated, then for 
\( m\in\{2,\ldots,N-2\}\):
\begin{align*}
{T_{m}^\top}
\begin{bNiceMatrix}
 A_i^{(0)}(x)\\[4pt] A_i^{(1)}(x)\\\Vdots\\[4pt] A_i^{(m-1)}(x)
 \end{bNiceMatrix}
 & =x\begin{bNiceMatrix}
 A_i^{(0)}(x)\\[4pt] A_i^{(1)}(x)\\\Vdots\\[4pt] A_i^{(m-1)}(x)
 \end{bNiceMatrix}
 -\begin{bNiceMatrix}
 0\\ \Vdots\\ 0\\ d_{m}A_i^{(m)}(x)\\[4pt] c_{m}A_i^{(m)}(x)+d_{m+1}A_i^{(m+1)}(x)
 \end{bNiceMatrix},
\end{align*}
for \( i\in\{1,2\} \). In the second term on the right-hand side, there are two possibly nonzero components. We would like to follow a similar procedure as we did with type II multiple orthogonal polynomials. For that purpose, we use the following polynomials constructed in terms of determinants of type I multiple orthogonal polynomials:
\begin{align*}
 \mathcal A^{(n)}_k(x) \coloneq (-1)^k
 \begin{vmatrix}
 A_1^{(n)}(x) & A_2^{(n)}(x)\\[3pt]
 A_1^{(k)}(x) & A_2^{(k)}(x)
 \end{vmatrix},
\end{align*}
as already discussed in \cite{gqmop,stbbm}. Then, the recurrence relations imply:
\begin{align}
\label{recurrenciaImatricial}
 {T_{m}^\top}
\begin{bNiceMatrix}
 \mathcal A_{m}^{(0)}(x)\\[6pt] 
 \mathcal A^{(1)}_{m}(x)\\[6pt] 
 \Vdots\\
 \mathcal A_{m}^{(m-1)}(x)
 \end{bNiceMatrix}
 & =x\begin{bNiceMatrix}
 \mathcal A_{m}^{(0)}(x)\\[6pt] 
 \mathcal A^{(1)}_{m}(x)\\[6pt] 
 \Vdots\\
 \mathcal A_{m}^{(m-1)}(x)
 \end{bNiceMatrix}
 -d_{m+1}
\begin{bNiceMatrix}
 0\\ \Vdots\\ 0\\ \mathcal A_{m}^{(m+1)}(x)
 \end{bNiceMatrix}, 
\end{align}
for \( m\in\{2,\ldots,N-3\} \). Here, we can see that the eigenvalues of \( T^\top_m\) are the zeros of \(\mathcal A^{(m+1)}_{m} \). However, the eigenvalues of a matrix and its transpose coincide, and earlier, we found that the eigenvalues of \( T_m\) are the zeros of \( B^{(m)} \). This is only possible if \(\mathcal A^{(m+1)}_{m}\) and \( B^{(m)}\) are the same up to a multiplicative constant. This was shown to hold in \cite[Proposition 3.2]{stbbm}, see also \cite{gqmop,stbbm0}.

Furthermore, we can observe that the right eigenvector of \( T_m^\top\) corresponding to eigenvalue \( x_{m,i}\) can be taken as:
\begin{align*}
&\begin{bNiceMatrix}
 \mathcal A_{m}^{(0)}(x_{m,i})\\\Vdots\\\mathcal A_{m}^{(m-1)}(x_{m,i})
\end{bNiceMatrix},
&
i &\in \{1,\ldots,m\} .
\end{align*}
Similarly to what we did for \( T_m \), we can transform \( T_m^\top\) into a stochastic matrix, which we refer to as a type~I stochastic matrix. First, we define:
\begin{align*}
 \sigma_{I,m}& \coloneq \diag
 \left[
 \begin{NiceMatrix}
 a_m^{(0)} & \cdots & a^{(m-1)}_m
 \end{NiceMatrix}
 \right], &
 a^{(n)}_m
& \coloneq \mathcal A^{(n)}_m(x_{m,m}) .
\end{align*}
Next, we have the following theorem:
\begin{teo}
 \label{propestocasticaI}
 Consider the recurrence relation given by Equation \eqref{recurrenciaImatricial}, where \( T_m\) is a nonnegative matrix and \( \lbrace{x_{m,i}}\rbrace_{i=1}^m\) are the zeros of 
\(\mathcal A^{(m+1)}_{m}\) 
with \( x_{m,m}>0 \). Then, the following matrix:
\begin{align}
 \label{estocasticaI}
 P_{I,m}&\coloneq\dfrac{1}{x_{m,m}}\sigma_{I,m}^{-1} T_{m}^{\top} \sigma_{I,m} ,
\end{align}
 is a stochastic matrix.
\end{teo}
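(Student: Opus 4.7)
The plan is to mirror the proof of Theorem \ref{propestocasticaII}, checking the two defining properties \eqref{nonegatividad} and \eqref{estocasticidad} of a stochastic matrix for $P_{I,m}$. The main tools are the eigenvector identification coming from \eqref{recurrenciaImatricial} together with the Perron--Frobenius Theorem \ref{PerronFrobenius} applied this time to $T_m^\top$.

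First I would establish the stochasticity condition $P_{I,m}e = e$. Evaluating the recurrence \eqref{recurrenciaImatricial} at $x = x_{m,m}$ and using that, by hypothesis, $x_{m,m}$ is a zero of $\mathcal A^{(m+1)}_{m}$, the correction term on the right-hand side vanishes. This exhibits the vector with entries $a_m^{(n)} = \mathcal A^{(n)}_m(x_{m,m})$ as a right eigenvector of $T_m^\top$ with eigenvalue $x_{m,m}$, i.e.\ $T_m^\top \sigma_{I,m} e = x_{m,m}\, \sigma_{I,m} e$. Dividing by $x_{m,m}>0$ and conjugating by $\sigma_{I,m}^{-1}$ gives $P_{I,m} e = e$, which is \eqref{estocasticidad}.

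Next I would address nonnegativity \eqref{nonegatividad}. Since $T_m$ is nonnegative by hypothesis and $x_{m,m}>0$, and the conjugation by the diagonal matrix $\sigma_{I,m}$ does not affect the sign pattern of the off-diagonal entries beyond multiplying by a common sign per row/column, the only thing to verify is that $a_m^{(0)},\ldots,a_m^{(m-1)}$ all have the same sign. For this I would invoke Perron--Frobenius on $T_m^\top$: its transpose $T_m$ is nonnegative and irreducible (the subdiagonals $d_n>0$ ensure irreducibility, a property preserved under transposition), so $T_m^\top$ is irreducible nonnegative too, and $\sigma(T_m^\top)=\sigma(T_m)$ so $x_{m,m}$ is still the spectral radius. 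By item (iii) of Theorem \ref{PerronFrobenius}, the components of its right Perron eigenvector, namely the $a_m^{(n)}$, share a common sign, which can be chosen positive. Hence $\sigma_{I,m}^{-1} T_m^\top \sigma_{I,m}$ is entrywise nonnegative, and \eqref{nonegatividad} holds.

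The most delicate point, and in a sense the only conceptually nontrivial step, is to make sure that the eigenvalue $x_{m,m}$ of the ``type~I'' recurrence coincides with the Perron--Frobenius eigenvalue of $T_m^\top$. This relies on the identification—already used in the paragraph preceding the statement and established in \cite{stbbm} (see also \cite{gqmop,stbbm0})—between $\mathcal A^{(m+1)}_{m}$ and $B^{(m)}$ up to a multiplicative constant, so their zero sets coincide and $x_{m,m}$ is precisely the largest eigenvalue of $T_m^\top$. Once this is in place, the proof is otherwise a direct transcription of the argument given for Theorem \ref{propestocasticaII}, with $B^{(n)}(x_{m,m})$ replaced by $\mathcal A^{(n)}_m(x_{m,m})$ and $T_m$ replaced by $T_m^\top$.
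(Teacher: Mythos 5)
Your proposal is correct and follows essentially the same route as the paper's proof: verify $P_{I,m}e=e$ directly from the recurrence \eqref{recurrenciaImatricial} evaluated at the zero $x_{m,m}$ of $\mathcal A^{(m+1)}_{m}$, and obtain nonnegativity from the common sign of $\mathcal A^{(0)}_{m}(x_{m,m}),\ldots,\mathcal A^{(m-1)}_{m}(x_{m,m})$ via item (iii) of the Perron--Frobenius Theorem applied to the irreducible nonnegative matrix $T_m^\top$, whose spectral radius is $x_{m,m}$. Your explicit remark that this last identification rests on $\mathcal A^{(m+1)}_{m}$ coinciding with $B^{(m)}$ up to a constant is exactly the point the paper settles in the paragraph preceding the theorem (citing \cite{stbbm}), so nothing essential is missing.
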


\begin{proof}
 We need to verify that \( P_{I,m}\) is nonnegative and that the sum of each of its rows equals \( 1\). In other words, we need to ensure that conditions \eqref{nonegatividad} and \eqref{estocasticidad} are satisfied. Firstly, it can be directly checked that \( P_{I,m}e=e\) using the definition of \( P_{I,m} \). Thus, \( P_{I,m}\) satisfies \eqref{estocasticidad}. Secondly, since \( T_m^\top\) is nonnegative by assumption and \( x_{m,m}>0 \), the only condition required for \( P_{I,m}\) to be nonnegative is that \(\mathcal A^{(0)}_{m}(x_{m,m}),\ldots \), 
 \(\mathcal A^{(m-1)}_{m}(x_{m,m})\)
have the same sign. Notice that these components correspond exactly to the entries of the left eigenvector of \( T_m^\top\) associated with the eigenvalue \( x_{m,m} \). Moreover, \( x_{m,m}\) is the spectral radius of \( T_m^\top\) (see Definition \ref{defradioespectral}). Since \( T_m^\top\) is nonnegative and irreducible, the Perron--Frobenius Theorem applies, and we can conclude that \(\mathcal A^{(0)}_{m}(x_{m,m}),\ldots, \mathcal A^{(m-1)}_{m}(x_{m,m})\) have the same sign by (iii) of the Perron--Frobenius Theorem \ref{PerronFrobenius}. Therefore, \( P_{I,m}\) satisfies \eqref{nonegatividad}.
\end{proof}
The stochastic matrix \( P_{I,m}\) explicitly reads
\begin{align*}
P_{I,m} = \frac{1}{x_{m,m}}
 \begin{bNiceMatrix}
 b_0 & \frac{a^{(1)}_{m}}{a^{(0)}_{m}}c_1 & \frac{a^{(2)}_{m}}{a^{(0)}_{m}}d_2 & 0&\cdots & & 0 \\
 \frac{a^{(0)}_{m}}{a^{(1)}_{m}} & b_1 & \frac{a^{(2)}_{m}}{a^{(1)}_{m}}c_2 & \Ddots& \Ddots& & \Vdots\\
 0 & \frac{a^{(1)}_{m}}{a^{(2)}_{m}} & b_2 & & & & \\
 \vdots & \Ddots& \Ddots& \ddots& \Ddots& & 0\\
 & & & & & & \frac{a^{(m-1)}_{m}}{a^{(m-3)}_{m}}d_{m-1}\\[6pt]
 & & & & & & \frac{a^{(m-1)}_{m}}{a^{(m-2)}_{m}}c_{m-1}\\
 0 & \Cdots& & & 0 & \frac{a^{(m-2)}_{m}}{a^{(m-1)}_{m}} & b_{m-1}
 \end{bNiceMatrix}
.
\end{align*}

\begin{rem}
 Note that if we had chosen any other zero of
\(\mathcal A^{(m+1)}_{m}\) 
to construct \( P_{I,m} \), non-negativity would not be achievable due to (iv) of the Perron--Frobenius Theorem \ref{PerronFrobenius}.
\end{rem}

\begin{rem}
 We refer to the stochastic matrix \( P_{I,m}\) as type I because it is constructed in terms of type I multiple orthogonal polynomials. These type I stochastic matrices describe an \( m\)-state Markov chain where there is a possibility of moving up to two steps forward and one step backward at each time.
\end{rem}

As we did in Proposition \ref{pro:uni_class} we have:
\begin{pro}
 Markov chains with stochastic matrices \( P_{II,m}\) and \( P_{I,m}\) have only one class. 
\end{pro}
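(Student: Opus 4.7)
My plan is to mirror the argument of Proposition \ref{pro:uni_class}, exploiting the fact that both $P_{II,m}$ and $P_{I,m}$ are similar, via conjugation by a positive diagonal matrix and multiplication by the positive scalar $1/x_{m,m}$, to the recurrence Hessenberg matrices $T_m$ and $T_m^\top$ respectively. The diagonal matrices $\sigma_{II,m}$ and $\sigma_{I,m}$ have strictly positive entries; this is precisely the content of the Perron--Frobenius application already used in the proofs of Theorems \ref{propestocasticaII} and \ref{propestocasticaI}, so the similarity preserves the zero/nonzero pattern entrywise. Consequently, showing that the Markov chains have only one class reduces to showing that the nonnegative matrices $T_m$ and $T_m^\top$ are irreducible in the matrix-analytic sense, i.e., that their associated directed graphs are strongly connected.

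For $T_m$, the superdiagonal carries the value $1$ and the second subdiagonal carries the values $d_n > 0$. Thus from state $i$ there is always a direct transition to state $i+1$ (whenever $i < m$) and to state $i-2$ (whenever $i > 2$). From any starting state, iterating the forward step reaches every higher state, and combining forward steps with backward two-steps (for instance, via the composite $i \mapsto i+1 \mapsto i-1$) reaches every lower state. For $T_m^\top$ the argument is symmetric, with the subdiagonal providing unit backward steps and the second superdiagonal providing $d_n$-weighted forward two-steps. In both cases, as in the proof of Proposition \ref{pro:uni_class}, iterating these admissible moves produces, for each pair $(i,j)$, a number $r$ of transitions such that the $(i,j)$ entry of the corresponding power is strictly positive; hence all states communicate.

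The main potential obstacle is purely bookkeeping near the boundary states $\{1, 2, m-1, m\}$, where the availability of a two-step move is restricted. This is easily handled since a forward (respectively backward) unit step is available everywhere except at the top (respectively bottom) state, and a two-step backward (respectively forward) move is available everywhere except at the two bottom (respectively top) states, which is enough to patch the boundary cases. Since the reasoning is entirely structural and relies only on the sign pattern inherited from $T_m$ and $T_m^\top$, no new computation involving the specific values of $b_n$, $c_n$, $d_n$, $b_m^{(n)}$, or $a_m^{(n)}$ beyond their positivity is required.
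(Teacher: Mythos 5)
Your argument is correct and is essentially the paper's own: the paper simply invokes the analogy with Proposition \ref{pro:uni_class}, i.e.\ the band structure of \(P_{II,m}\) and \(P_{I,m}\), whose zero pattern coincides with that of \(T_m\) and \(T_m^\top\) under the positive diagonal conjugation, together with the strictly positive extreme diagonals (the unit super/subdiagonal and the entries \(d_n>0\)) to conclude that all states communicate. Your explicit path-construction and boundary bookkeeping just spell out the details the paper leaves implicit.
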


\subsection{Spectral properties}
Matrices \(\mathcal U\) and \(\mathcal V\) are constructed using the right and left eigenvectors of \( T_m\) as follows:
\begin{align*}
 \mathcal U &\coloneq \begin{bNiceMatrix}
 B^{(0)}(x_{m,1}) & \cdots & B^{(0)}(x_{m,m})\\
 \vdots & & \vdots\\
 B^{(m-1)}(x_{m,1}) & \cdots & B^{(m-1)}(x_{m,m})
 \end{bNiceMatrix}, \\ \mathcal V &\coloneq \begin{bNiceMatrix}
 \frac{\mathcal A_{m}^{(0)}(x_{m,1})}{\sum^m_{l=1}B^{(l-1)}(x_{m,1})\mathcal A_{m}^{(l-1)}(x_{m,1})} & \cdots & \frac{\mathcal A_{m}^{(m-1)}(x_{m,1})}{\sum^m_{l=1}B^{(l-1)}(x_{m,1})\mathcal A_{m}^{(l-1)}(x_{m,1})}\\
 \vdots & & \vdots\\
 \frac{\mathcal A_{m}^{(0)}(x_{m,m})}{\sum^m_{l=1}B^{(l-1)}(x_{m,m})\mathcal A_{m}^{(l-1)}(x_{m,m})} & \cdots & \frac{\mathcal A_{m}^{(m-1)}(x_{m,m})}{\sum^m_{l=1}B^{(l-1)}(x_{m,m})\mathcal A_{m}^{(l-1)}(x_{m,m})}
 \end{bNiceMatrix}.
\end{align*}
Since the left and right eigenvectors are orthogonal, and these vectors are normalized to be biorthogonal, we have:
\begin{align*}
 \mathcal U \mathcal V = \mathcal V \mathcal U = I_m,
\end{align*}
where 
\( I _m\)
is the \( m\times m\) identity matrix. Moreover, we have the spectral representation for the tetradiagonal Hessenberg matrix:
\begin{align*}
 T_m = \mathcal U \, \begin{bNiceMatrix}[columns-width=auto]
 x_{m,1} & 0 & \Cdots & 0\\
 0 & x_{m,2} & \Ddots & \Vdots\\
 \Vdots & \Ddots[shorten-end=5pt] & \Ddots[shorten-end=-5pt] & 0\\[4pt]
 0 & \Cdots & 0 & x_{m,m}
 \end{bNiceMatrix} \mathcal V,
\end{align*}
allowing us to easily compute any power of the recursion matrix:
\begin{align*}
 T_m^r = \mathcal U \, \begin{bNiceMatrix}[columns-width=auto]
 x_{m,1}^r & 0 & \Cdots & 0\\
 0 & x_{m,2}^r & \Ddots & \Vdots\\
 \Vdots & \Ddots[shorten-end=5pt] & \Ddots[shorten-end=-3pt] & 0\\[4pt]
 0 & \Cdots & 0 & x_{m,m}^r
 \end{bNiceMatrix} \mathcal V,
\end{align*}
where \( r\) is a positive integer.

\begin{pro} The following spectral representations for the iterated probabilities 
\begin{align}
 (P_{II,m}^r)_{i,j} &= \frac{1}{x^r_{m,m}} \frac{b_m^{(j-1)}}{b_m^{(i-1)}} \sum^{m}_{k=1} x^r_{m,k} \frac{B^{(i-1)}(x_{m,k})\mathcal A_{m}^{(j-1)}(x_{m,k})}{\sum^{m}_{l=1} B^{(l-1)}(x_{m,k})\mathcal A_{m}^{(l-1)}(x_{m,k})}, \label{formularepresentacionII} \\
 (P_{I,m}^r)_{i,j} &= \frac{1}{x^r_{m,m}} \frac{a_{m}^{(j-1)}}{a_{m}^{(i-1)}} \sum^{m}_{k=1} x^r_{m,k} \frac{B^{(j-1)}(x_{m,k})\mathcal A_{m}^{(i-1)}(x_{m,k})}{\sum^{m}_{l=1} B^{(l-1)}(x_{m,k})\mathcal A_{m}^{(l-1)}(x_{m,k})} ,\label{formularepresentacionI}
\end{align}
 are satisfied.
\end{pro}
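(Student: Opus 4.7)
The plan is to observe that both $P_{II,m}$ and $P_{I,m}$ are similarity transformations (by diagonal matrices) of $T_m$ and $T_m^\top$ respectively, so raising them to the $r$-th power reduces to raising $T_m$ to the $r$-th power. Since $\sigma_{II,m}^{-1}T_m\sigma_{II,m}$ is stochastic, we get
\begin{align*}
P_{II,m}^r=\frac{1}{x_{m,m}^r}\sigma_{II,m}^{-1}T_m^r\sigma_{II,m},\qquad P_{I,m}^r=\frac{1}{x_{m,m}^r}\sigma_{I,m}^{-1}(T_m^r)^\top\sigma_{I,m},
\end{align*}
where for the second identity I use $(T_m^\top)^r=(T_m^r)^\top$ together with $\sigma_{I,m}^\top=\sigma_{I,m}$. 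So the whole task reduces to extracting the $(i,j)$ entry of $T_m^r$.

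For this I would invoke the spectral decomposition $T_m=\mathcal U\,\mathrm{diag}(x_{m,1},\ldots,x_{m,m})\,\mathcal V$ already displayed in the excerpt, which in turn relies on $\mathcal U\mathcal V=\mathcal V\mathcal U=I_m$. The key input here is that the right eigenvectors of $T_m$ at $x_{m,k}$ are the columns $\bigl(B^{(i-1)}(x_{m,k})\bigr)_i$ (from \eqref{eigenvectorrightmultiple}) and that the right eigenvectors of $T_m^\top$ at $x_{m,k}$ are the columns $\bigl(\mathcal A_m^{(i-1)}(x_{m,k})\bigr)_i$ (from \eqref{recurrenciaImatricial}, once one recalls that $\mathcal A_m^{(m+1)}$ and $B^{(m)}$ share the same zeros). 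Biorthogonality between these two families, normalized by the common factor $\sum_l B^{(l-1)}(x_{m,k})\mathcal A^{(l-1)}_m(x_{m,k})$, is precisely what makes the given $\mathcal U,\mathcal V$ mutual inverses. Taking the $r$-th power of the diagonal factor yields
\begin{align*}
(T_m^r)_{i,j}=\sum_{k=1}^m x_{m,k}^r\,\frac{B^{(i-1)}(x_{m,k})\,\mathcal A_m^{(j-1)}(x_{m,k})}{\sum_{l=1}^m B^{(l-1)}(x_{m,k})\,\mathcal A_m^{(l-1)}(x_{m,k})}.
\end{align*}

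Finally I would substitute this into the two displayed expressions for $P_{II,m}^r$ and $P_{I,m}^r$. In the type II case, conjugation by $\sigma_{II,m}$ multiplies the $(i,j)$ entry by $b_m^{(j-1)}/b_m^{(i-1)}$, giving \eqref{formularepresentacionII} directly. In the type I case, we need entry $(j,i)$ of $T_m^r$ (because of the transpose), and conjugation by $\sigma_{I,m}$ multiplies it by $a_m^{(j-1)}/a_m^{(i-1)}$; swapping the roles of $i$ and $j$ inside the sum yields \eqref{formularepresentacionI}.

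The proof is essentially a bookkeeping exercise once the spectral decomposition is in hand. The only nontrivial step is the biorthogonality $\mathcal U\mathcal V=I_m$, but this has already been asserted just before the statement and rests on the identification of $\mathcal A_m^{(m+1)}$ with $B^{(m)}$ (up to scalar) proved in \cite{stbbm}; so in the proof I will simply cite that spectral decomposition rather than rederive it.
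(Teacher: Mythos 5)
Your proposal is correct and follows essentially the same route as the paper: it uses the spectral decomposition \( T_m=\mathcal U\,\diag(x_{m,1},\ldots,x_{m,m})\,\mathcal V \) with the biorthogonality \( \mathcal U\mathcal V=I_m \) to get the entry formula for \( T_m^r \), and then substitutes it into the similarity relations \eqref{estocasticaII} and \eqref{estocasticaI}, handling the transpose in the type I case exactly as the paper intends.
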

\begin{proof}
 The spectral representation of \( T_m\) allows us to express every entry of every power of the Hessenberg matrix \( T_m\) as follows:
 \begin{align}
 (T_m^r)_{i,j} = \sum^{m}_{k=1} x^r_{m,k} \frac{B^{(i-1)}(x_{m,k})\mathcal A_{m}^{(j-1)}(x_{m,k})}{\sum^{m}_{l=1} B^{(l-1)}(x_{m,k})\mathcal A_{m}^{(l-1)}(x_{m,k})},
 \end{align}
 where \( r\) is a nonnegative integer. Using this expression, along with \eqref{estocasticaII} and \eqref{estocasticaI}, we can derive the spectral formulas \eqref{formularepresentacionII} and \eqref{formularepresentacionI} for the corresponding probabilities after \( r \) transitions.
\end{proof}

We can use these formulas to show that both Markov chains described by the stochastic matrices \( P_{II,m}\) and \( P_{I,m} \), defined in Equations \eqref{estocasticaII} and \eqref{estocasticaI}, respectively, are recurrent.
\begin{pro}
 Both Markov chains described by the stochastic matrices \( P_{II,m}\) and \( P_{I,m}\) are recurrent.
\end{pro}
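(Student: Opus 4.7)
The plan is to mimic the proof of Proposition \ref{prorecurrentesescalar} from the scalar case, using the spectral representations \eqref{formularepresentacionII} and \eqref{formularepresentacionI} in place of \eqref{formularepresentacionescalar}. By Remark \ref{condicionsuficienteparalarecurrencia}, it suffices to fix any state $i\in\{1,\ldots,m\}$ and show that the series $\sum_{r=0}^{\infty}(P_{II,m}^r)_{i,i}$ and $\sum_{r=0}^{\infty}(P_{I,m}^r)_{i,i}$ both diverge.

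For the type II chain I would set $j=i$ in \eqref{formularepresentacionII}, so that the prefactor $b_m^{(j-1)}/b_m^{(i-1)}$ collapses to $1$, and interchange the finite sum over $k$ with the sum over $r$ to obtain
\begin{align*}
\sum_{r=0}^{\infty}(P_{II,m}^r)_{i,i}
= \sum_{k=1}^{m}\dfrac{B^{(i-1)}(x_{m,k})\,\mathcal A_{m}^{(i-1)}(x_{m,k})}{\sum_{l=1}^{m}B^{(l-1)}(x_{m,k})\,\mathcal A_{m}^{(l-1)}(x_{m,k})}\sum_{r=0}^{\infty}\Bigl(\dfrac{x_{m,k}}{x_{m,m}}\Bigr)^{r}.
\end{align*}
Since $x_{m,m}$ is the largest zero (so $x_{m,k}/x_{m,m}<1$ for $k<m$), each inner geometric series converges except when $k=m$, where it diverges. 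The same manipulation, starting from \eqref{formularepresentacionI} with $j=i$ and using $a_m^{(j-1)}/a_m^{(i-1)}=1$, gives the analogous expression for $(P_{I,m}^r)_{i,i}$, and again only the $k=m$ summand can cause divergence.

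It therefore remains to verify that the coefficient attached to the $k=m$ term is strictly positive; this is the only nontrivial step. By Perron--Frobenius (Theorem \ref{PerronFrobenius}) applied to the irreducible nonnegative matrix $T_m$, its largest eigenvalue $x_{m,m}$ has right eigenvector with strictly positive entries $B^{(0)}(x_{m,m}),\ldots,B^{(m-1)}(x_{m,m})$ (the same sign argument used in the proof of Theorem \ref{propestocasticaII}). Applying Perron--Frobenius to $T_m^\top$, the corresponding right eigenvector $\bigl(\mathcal A_{m}^{(0)}(x_{m,m}),\ldots,\mathcal A_{m}^{(m-1)}(x_{m,m})\bigr)$ used in the proof of Theorem \ref{propestocasticaI} is likewise strictly positive in all components. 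Consequently both $B^{(i-1)}(x_{m,m})\mathcal A_{m}^{(i-1)}(x_{m,m})>0$ and the normalizing sum $\sum_{l=1}^{m}B^{(l-1)}(x_{m,m})\mathcal A_{m}^{(l-1)}(x_{m,m})>0$, so the $k=m$ coefficient is positive and the divergent geometric series forces $\sum_{r}(P_{II,m}^r)_{i,i}=\sum_{r}(P_{I,m}^r)_{i,i}=\infty$.

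The main obstacle is essentially bookkeeping: making sure that the biorthogonality/normalization denominator is really positive and that no cancellation can occur among the $B$ and $\mathcal A$ values at $x=x_{m,m}$. Once the positivity of the Perron left and right eigenvectors is invoked, the geometric-series argument is immediate and identical in shape to the scalar birth-and-death case, which is why I would simply point back to Proposition \ref{prorecurrentesescalar} for the structure of the proof after exhibiting the explicit series above.
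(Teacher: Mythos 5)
Your proposal is correct and follows essentially the same route as the paper: invoke Remark \ref{condicionsuficienteparalarecurrencia}, set \(j=i\) in the spectral representations \eqref{formularepresentacionII} and \eqref{formularepresentacionI}, exchange the finite sum over \(k\) with the sum over \(r\), and observe that the geometric series diverges precisely at \(k=m\). The extra care you take in checking positivity of the \(k=m\) coefficient via the Perron--Frobenius eigenvectors is a welcome (if implicit in the paper) detail, not a departure in method.
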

\begin{proof}
 We need to show that every state \( i \in \{1, \ldots, m\}\) is recurrent for both chains. According to Remark~\ref{condicionsuficienteparalarecurrencia} and Proposition \ref{prorecurrentesescalar}, we only need to check that the limits
 \begin{align*}
 \lim_{s\rightarrow1^{-}} P_{II,m,ii}(s) & = \lim_{s\rightarrow1^{-}} \sum_{r=0}^{\infty} (P^r_{II,m})_{i,i}s^r = \sum_{r=0}^{\infty} (P^r_{II,m})_{i,i},
 \\
 \lim_{s\rightarrow1^{-}} P_{I,m,ii}(s) & = \lim_{s\rightarrow1^{-}} \sum_{r=0}^{\infty} (P^r_{I,m})_{i,i}s^r = \sum_{r=0}^{\infty} (P^r_{I,m})_{i,i},
 \end{align*}
 diverge. By substituting \( (P_{II,m}^r)_{i,i}\) and \( (P_{I,m}^r)_{i,i}\) with the expressions from \eqref{formularepresentacionII} and \eqref{formularepresentacionI}, respectively, we obtain:
 \begin{align*}
 \sum_{r=0}^{\infty} (P_{II,m}^r)_{i,i} &= \sum^{m}_{k=1} \frac{B^{(i-1)}(x_{m,k})\mathcal A_{m}^{(i-1)}(x_{m,k})}{\sum^{m}_{l=1} B^{(l-1)}(x_{m,k})\mathcal A_{m}^{(l-1)}(x_{m,k})} \sum_{r=0}^{\infty} \left(\frac{x_{m,k}}{x_{m,m}}\right)^r, \\
 \sum_{r=0}^{\infty} (P_{I,m}^r)_{i,i} &= \sum^{m}_{k=1} \frac{B^{(i-1)}(x_{m,k})\mathcal A_{m}^{(i-1)}(x_{m,k})}{\sum^{m}_{l=1} B^{(l-1)}(x_{m,k})\mathcal A_{m}^{(l-1)}(x_{m,k})} \sum_{r=0}^{\infty} \left(\frac{x_{m,k}}{x_{m,m}}\right)^r.
 \end{align*}
It is clear that both sums diverge when \( k=m \).
Therefore, every state \( i \) is recurrent for both Markov chains.
\end{proof}
\begin{lemma}
 If there exists \( i \in\N_0\) such that \( b_{i}>0 \), the Markov chains associated with \( P_{II,m}\) and \( P_{I,m}\) are aperiodic with period \( 1 \). If \( b_{i}=0\) for all \( i \in\N_0\) and there exists \( j \in \N_0\) with \( c_j\neq 0 \), then they have period \( 2 \). Finally, if \( b_n=c_n=0\) for all \( n\in\N_0 \), then the Markov chain has period \( 3 \). 
\end{lemma}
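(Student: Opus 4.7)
The proof splits into three cases, one per clause of the statement. Since by the preceding proposition the chains with stochastic matrices \( P_{II,m}\) and \( P_{I,m}\) both have a single class, the class-properties theorem guarantees that every state shares the same period, so in each case it suffices to compute the period of one convenient state.

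For the first clause, the diagonal entry of \( P_{II,m}\) and of \( P_{I,m}\) in row \( i+1\) equals \( b_i/x_{m,m} \), which is positive by hypothesis. This gives a self-loop at state \( i+1 \), and Remark~\ref{condicionsuficienteaperiodicidad} immediately forces period \( 1 \), which then propagates to every state via the class property.

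For the third clause, with \( b_n=c_n=0\) for every \( n \), the only nonzero off-diagonal entries of \( P_{II,m}\) sit on the superdiagonal (moves \( +1 \), always present) and on the second subdiagonal (moves \( -2 \), always present since \( d_n>0 \)); the mirror picture holds for \( P_{I,m}\) with moves \( -1\) and \( +2 \). In both cases the two available moves reduce to the same residue modulo \( 3 \), so the row index advances by that residue at every step and any closed walk has length divisible by \( 3 \). The triangle \( i\to i+1\to i+2\to i\) realises length \( 3 \), so the period is exactly~\( 3 \).

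For the second clause the plan is to combine (a) a length-\( 2\) closed walk with (b) the absence of odd-length closed walks, forcing the period to be \( 2 \). Step (a) is the two-cycle \( j+1\to j\to j+1\) for type II (and the transposed two-cycle for type I), which is valid precisely because \( c_j\neq 0 \). Step (b) amounts to showing that every diagonal entry of \( P^{2r+1}\) vanishes, and I would approach it by introducing a \( \mathbb Z/2\)-coloring of the states compatible with the nonzero support of \( P \): the vanishing of the main diagonal should ensure that each elementary move flips the color, so returns close only on even steps.

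The main obstacle is step (b) in Case 2: because the matrix always carries a nonzero second subdiagonal supplied by the \( d_n\)'s, a naive displacement count over \( \{+1,-1,-2\}\) does not separate parities. Consequently the argument must exploit the vanishing of every \( b_n\) in a more structural way than a bare move-set bookkeeping, likely through an inductive analysis of the support of \( P^r\) that isolates even from odd \( r\). Reconciling the always-active \( -2\) (resp.\ \( +2\)) jumps with the claimed period \( 2\) is the genuine difficulty of the proof.
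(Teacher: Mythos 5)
Your first and third cases are sound: the self-loop argument for aperiodicity is exactly the paper's, and your mod-\(3\) displacement count for the case \(b_n=c_n=0\) is in fact more detailed than the paper's ``it can be checked''. The gap is the unfinished second case — but the obstacle you isolated there is not a defect of your argument; it is fatal to the clause itself. Since \(d_n>0\) is assumed throughout (irreducibility of \(T_m\) rests on it), the entries of \(P_{II,m}\) on the superdiagonal and on the second subdiagonal are always positive, so for every state and every \(m\geq 3\) the closed walk \(i\to i+1\to i+2\to i\) has positive probability, giving \((P_{II,m}^3)_{i,i}>0\). Together with the length-\(2\) closed walk through the pair of states coupled by \(c_j\neq 0\), the gcd of return lengths is \(\gcd(2,3)=1\): in the situation of the second clause the chain is aperiodic, not of period \(2\) (the same holds for \(P_{I,m}\) by transposition). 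Consequently no \(\mathbb Z/2\)-coloring compatible with the support of \(P_{II,m}\) can exist, and your step (b) cannot be carried out by any refinement of the argument.

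For comparison, the paper's own proof of this clause simply asserts that all odd powers of both transition matrices have vanishing main diagonal while even powers do not — precisely the naive parity bookkeeping you distrusted — and this assertion is contradicted by \((P^3)_{i,i}>0\) above. The period-\(2\) conclusion survives only when the \(-2\) (respectively \(+2\)) jumps are absent, e.g.\ for \(m=2\), or in the tridiagonal scalar analogue treated earlier in the paper; under the standing hypotheses \(d_n>0\) and \(m\geq 3\) the middle clause of the lemma is false. So the honest assessment is: your cases 1 and 3 are complete and correct, and the remaining case is not a missing idea on your part but an error in the statement you were asked to prove.
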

\begin{proof}
 If \( b_i>0 \), by definition, the state \( i \) has period \( 1 \). As all states are in the same class, the statement follows. If \( b_{i}=0\) for all states \( i \in\N_0\) and there is \( j \in \N_0\) with \( c_j\neq 0 \), all odd powers of both stochastic matrices have zero entries on the main diagonal, while the even powers have nonzero entries, resulting in a period of \( 2 \). Finally, if \( b_i=c_i=0\) for all states \( i \in\N_0 \), it can be checked that the \( 3 n+1\) and \( 3n+2\) powers of both transition matrices have zero entries on the diagonal, while the \( 3 n\) powers have nonzero entries on the main diagonal.
\end{proof}

\begin{coro}
 Let us assume that there exists a state \( i \in\{1,\ldots,m\}\) such that \( b_{i}>0 \). Then, both Markov chains described by the stochastic matrices \( P_{II,m}\) and \( P_{I,m} \), defined in \eqref{estocasticaII} and \eqref{estocasticaI}, respectively, are ergodic.
\end{coro}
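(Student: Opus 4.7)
The plan is to observe that this corollary follows immediately by combining the two results just established. By the definition of ergodicity (Definition \ref{defergodico}), a state is ergodic precisely when it is both aperiodic and recurrent, and a Markov chain is ergodic when all of its states are ergodic.

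First, I would invoke the preceding proposition, which guarantees that both Markov chains governed by \( P_{II,m} \) and \( P_{I,m} \) are recurrent without any further assumption on the \( b_i \). This takes care of the recurrence half of the definition uniformly across all states.

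Next, I would apply the preceding lemma under the hypothesis that there exists \( i \in \{1, \ldots, m\} \) with \( b_i > 0 \). The lemma yields that the chain has period \( 1 \), i.e., is aperiodic; crucially, since both stochastic matrices have a single class (by the earlier proposition on uniqueness of class for \( P_{II,m} \) and \( P_{I,m} \)), the class property guarantees that aperiodicity of one state forces aperiodicity of every state.

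Combining aperiodicity (from the lemma) with recurrence (from the proposition), every state of either chain is both aperiodic and recurrent, hence ergodic, and therefore the chains themselves are ergodic. No step here presents a real obstacle — the corollary is essentially a bookkeeping consequence of the two results immediately preceding it, and the only subtlety worth emphasizing in the write-up is the appeal to the single-class structure to promote aperiodicity of one state to aperiodicity of all states.
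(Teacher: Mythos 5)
Your argument is correct and matches the paper's (implicit) reasoning exactly: ergodicity is by definition recurrence plus aperiodicity, recurrence is supplied unconditionally by the preceding proposition, and aperiodicity follows from the preceding lemma together with the single-class property once some \( b_i>0 \). Nothing further is needed.
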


\begin{coro}
 The Markov chains described by the stochastic matrices \( P_{II,m}\) and \( P_{I,m} \), defined in Equations \eqref{estocasticaII} and \eqref{estocasticaI}, are ergodic if and only if there exists a state \( i \) such that \( b_{i}>0 \). 
\end{coro}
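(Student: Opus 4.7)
The plan is to reduce the corollary to the previous lemma on periodicity and the already-established recurrence of both chains, then argue both directions separately. Ergodicity, by Definition \ref{defergodico}, is the conjunction of recurrence and aperiodicity, so since recurrence of $P_{II,m}$ and $P_{I,m}$ has already been proved unconditionally in the preceding proposition, the corollary collapses to the claim that aperiodicity is equivalent to the existence of some state $i\in\{1,\ldots,m\}$ with $b_i>0$.

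For the ``if'' direction, I would simply invoke the preceding corollary, which already asserts ergodicity whenever some $b_i>0$. Alternatively, and equivalently, the positivity of $b_i$ makes $(P_{II,m})_{i,i}>0$ and $(P_{I,m})_{i,i}>0$, so by Remark \ref{condicionsuficienteaperiodicidad} state $i$ is aperiodic; since by Proposition \ref{pro:uni_class} both chains are irreducible (they have only one class) and period is a class property, every state is aperiodic.

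For the ``only if'' direction, I would argue by contrapositive: suppose $b_i=0$ for all $i\in\{1,\ldots,m\}$. Then the preceding lemma gives exactly two possibilities, period $2$ (if some $c_j\neq 0$) or period $3$ (if additionally all $c_i=0$). In either case both chains fail to be aperiodic, hence they are not ergodic. Therefore ergodicity forces at least one $b_i$ to be strictly positive.

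The combination of these two directions yields the stated equivalence. No step is really an obstacle here, since all the heavy lifting (recurrence, the periodicity trichotomy, and the single-class property) is already encapsulated in the previous proposition, corollary, and lemma; the only thing to be careful about is phrasing the equivalence so that one does not accidentally strengthen ``aperiodic'' to ``period $1$ with some diagonal entry positive at a specific state'', since the uniqueness of the class guarantees that the period detected at one state propagates to all states of the chain.
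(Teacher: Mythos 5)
Your proposal is correct and follows essentially the same route as the paper, which likewise reduces the corollary to the already-established unconditional recurrence plus the periodicity lemma (the paper's own proof is a two-line remark to exactly this effect). Your version merely spells out the two directions and the class-property propagation more explicitly, which is a faithful expansion rather than a different argument.
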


\begin{proof}
 As we have seen, ergodicity occurs if and only if the chain is recurrent and aperiodic. Therefore, as recurrence is ensured, we only need to check aperiodicity.
\end{proof}
\begin{pro}
 \label{propestacionariomultiple}
 The probability vector
 \begin{align}
 \label{estacionariomultiple}
 \pi_m\coloneq
 \frac{1}{\sum_{l=1}^m b_m^{(l-1)}a^{(l-1)}_{m}} \begin{bNiceMatrix}
 b_m^{(0)}a^{(0)}_{m}&\cdots&b_m^{(m-1)}a^{(m-1)}_{m}
 \end{bNiceMatrix} ,
 \end{align}
 is the unique steady state for both the type II and type I stochastic matrices, as shown in \eqref{estocasticaII} and \eqref{estocasticaI}, respectively. 
 This means that
 \begin{align*}
 \pi_mP_{II,m}&=\pi_m, & \pi_mP_{I,m}&=\pi_m.
 \end{align*}
\end{pro}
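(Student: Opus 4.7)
The plan is to exploit the fact that $\pi_m$ is, up to normalization, the entrywise (Hadamard) product of the Perron right eigenvector of $T_m$ and the Perron left eigenvector of $T_m$ (equivalently, the Perron right eigenvector of $T_m^\top$). Write $v \coloneq (b_m^{(0)},\ldots,b_m^{(m-1)})^\top$ and $u \coloneq (a_m^{(0)},\ldots,a_m^{(m-1)})$, so that by \eqref{eigenvectorrightmultiple} and the analogous statement for $T_m^\top$ derived just before Theorem~\ref{propestocasticaI} we have $T_m v = x_{m,m}\, v$ and $u\, T_m = x_{m,m}\, u$. With $Z \coloneq uv = \sum_{l=1}^m b_m^{(l-1)} a_m^{(l-1)}$, the definition \eqref{estacionariomultiple} becomes
\begin{align*}
 \pi_m = \tfrac{1}{Z}\, u\, \sigma_{II,m} = \tfrac{1}{Z}\, v^\top \sigma_{I,m}.
\end{align*}

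For the type II stochastic matrix, I would compute directly using \eqref{estocasticaII}:
\begin{align*}
 \pi_m P_{II,m} = \tfrac{1}{x_{m,m}}\, \pi_m \sigma_{II,m}^{-1} T_m \sigma_{II,m} = \tfrac{1}{x_{m,m} Z}\, u\, T_m \sigma_{II,m} = \tfrac{1}{Z}\, u\, \sigma_{II,m} = \pi_m,
\end{align*}
where I have used $\pi_m \sigma_{II,m}^{-1} = \tfrac{1}{Z} u$ together with the left eigenvalue equation $uT_m = x_{m,m} u$. For the type I matrix, the same idea works with the roles of the left/right eigenvectors swapped: from \eqref{estocasticaI},
\begin{align*}
 \pi_m P_{I,m} = \tfrac{1}{x_{m,m}}\, \pi_m \sigma_{I,m}^{-1} T_m^\top \sigma_{I,m} = \tfrac{1}{x_{m,m} Z}\, v^\top T_m^\top \sigma_{I,m} = \tfrac{1}{Z}\, v^\top \sigma_{I,m} = \pi_m,
\end{align*}
using that $v^\top T_m^\top = (T_m v)^\top = x_{m,m}\, v^\top$. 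Hence $\pi_m$ is invariant for both matrices.

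It remains to check that $\pi_m$ is genuinely a probability vector and that it is the unique steady state. Nonnegativity of its entries follows because both vectors $v$ and $u$ can be chosen positive by Perron--Frobenius (this is exactly what was used in the proofs of Theorems~\ref{propestocasticaII} and~\ref{propestocasticaI}), hence $Z>0$ and each $b_m^{(j-1)} a_m^{(j-1)}/Z \ge 0$; the entries sum to one by construction. Uniqueness follows from the earlier theorem on irreducible recurrent Markov chains together with the single-class property already established for $P_{II,m}$ and $P_{I,m}$.

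No step is really a serious obstacle: the only subtlety is being careful with the identification $\pi_m \sigma_{II,m}^{-1} = \tfrac{1}{Z} u$ (and the parallel identification for type I), which comes down to observing that conjugating by $\sigma_{II,m}$ converts the right Perron eigenvector into the constant vector $e$ while turning the left Perron eigenvector into the stationary vector. Once this bookkeeping is in place, the proof is the one-line computation displayed above.
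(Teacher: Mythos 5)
Your proof is correct and follows essentially the same route as the paper's: both reduce the claim to the left/right Perron eigenvector equations $uT_m=x_{m,m}u$ and $T_mv=x_{m,m}v$ after observing that $\pi_m\sigma_{II,m}^{-1}=\tfrac1Z u$ and $\pi_m\sigma_{I,m}^{-1}=\tfrac1Z v^\top$, and both obtain uniqueness from recurrence of the chains. The only difference is cosmetic: the paper writes the conjugation computation out entrywise rather than packaging it in the eigenvector notation.
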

\begin{proof}
 The proof is straightforward and completely analogous to that of Proposition \ref{proestacionarioescalar} using equations \eqref{estocasticaII} and \eqref{estocasticaI}.
 We have
 \begin{align*}
 \pi_m P_{II,m}&= \frac{1}{\sum_{l=1}^m b_m^{(l-1)}a^{(l-1)}_{m}} \begin{bNiceMatrix}
 b_m^{(0)}a^{(0)}_{m}&\cdots&b_m^{(m-1)}a^{(m-1)}_{m}
 \end{bNiceMatrix} \dfrac{1}{x_{m,m}} \sigma_{II,m}^{-1} T_m \sigma_{II,m}\\
 &=\frac{1}{\sum_{l=1}^m b_m^{(l-1)}a^{(l-1)}_{m}} \begin{bNiceMatrix}
 a^{(0)}_{m}&\cdots&a^{(m-1)}_{m}
 \end{bNiceMatrix} \dfrac{1}{x_{m,m}} T_m \sigma_{II,m}\\&=
 \frac{1}{\sum_{l=1}^m b_m^{(l-1)}a^{(l-1)}_{m}} \begin{bNiceMatrix}
 a^{(0)}_{m}&\cdots&a^{(m-1)}_{m}
 \end{bNiceMatrix} \sigma_{II,m}\\&=\pi_m,\\
 \pi_m P_{I,m}&= \frac{1}{\sum_{l=1}^m b_m^{(l-1)}a^{(l-1)}_{m}} \begin{bNiceMatrix}
 b_m^{(0)}a^{(0)}_{m}&\cdots&b_m^{(m-1)}a^{(m-1)}_{m}
 \end{bNiceMatrix}\dfrac{1}{x_{m,m}} \sigma_{I,m}^{-1} T^\top_m \sigma_{I,m}
 \\
 &= \frac{1}{\sum_{l=1}^m b_m^{(l-1)}a^{(l-1)}_{m}} \begin{bNiceMatrix}
 b_m^{(0)}&\cdots&b_m^{(m-1)}
 \end{bNiceMatrix}\dfrac{1}{x_{m,m}} T^\top_m \sigma_{I,m}\\&=
 \frac{1}{\sum_{l=1}^m b_m^{(l-1)}a^{(l-1)}_{m}} \begin{bNiceMatrix}
 b_m^{(0)}&\cdots&b_m^{(m-1)}
 \end{bNiceMatrix}\sigma_{I,m}\\&=\pi_m.
 \end{align*}
Recall that for recurrent chains, the steady state is unique.
\end{proof}

\begin{coro}
 For both Markov chains described by the stochastic matrices \( P_{II,m}\) and \( P_{I,m} \), defined at \eqref{estocasticaII} and \eqref{estocasticaI}, respectively, the expected return time to state \( j\) is given by
\begin{align*}
 (\bar t_m)_j &= \frac{ \sum_{l=1}^m b_m^{(l-1)}a^{(l-1)}_{m}}{ b_m^{(j-1)}a^{(j-1)}_{m}}, 
 & j&\in\{1,\ldots,m\} . 
 \end{align*}
\end{coro}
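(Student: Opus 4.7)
The plan is to invoke the general theorem from the introduction (the three-part theorem stating that for a recurrent Markov chain there is a unique steady state whose entries are the reciprocals of the expected return times, $\pi_j = 1/\bar t_j$). All the ingredients needed to apply that theorem have already been established in the preceding results, so this corollary is essentially a one-line computation.

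First I would note that both Markov chains governed by $P_{II,m}$ and $P_{I,m}$ are recurrent, which was proved just a few results above by showing that $\sum_{r=0}^\infty (P_{II,m}^r)_{i,i}$ and $\sum_{r=0}^\infty (P_{I,m}^r)_{i,i}$ diverge (the $k=m$ term contributes a divergent geometric series since $x_{m,k}/x_{m,m}=1$). Recurrence guarantees that the steady state is unique and that the return-time identity $\pi_j = 1/\bar t_j$ holds for each state $j\in\{1,\ldots,m\}$.

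Next I would appeal to Proposition \ref{propestacionariomultiple}, which identifies the unique steady state, for both $P_{II,m}$ and $P_{I,m}$, as
\begin{align*}
(\pi_m)_j = \frac{b_m^{(j-1)}a^{(j-1)}_{m}}{\sum_{l=1}^m b_m^{(l-1)}a^{(l-1)}_{m}}, \qquad j\in\{1,\ldots,m\}.
\end{align*}
Taking reciprocals yields exactly the stated formula for $(\bar t_m)_j$.

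There is essentially no obstacle here: the only subtlety worth flagging is making sure the uniqueness-of-steady-state assertion applies uniformly to both type I and type II chains, but this is covered by the common recurrence proof and by Proposition \ref{propestacionariomultiple}, which treats both cases in parallel. So the proof reduces to citing recurrence, citing the explicit steady state, and inverting entrywise.
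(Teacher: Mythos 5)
Your proposal is correct and matches the paper's (essentially one-line) argument: the paper likewise deduces this corollary from the recurrence of both chains, the resulting uniqueness of the steady state with $\pi_j = 1/\bar t_j$, and the explicit steady-state formula of Proposition \ref{propestacionariomultiple}. Nothing is missing.
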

\begin{coro}
 The steady state \( \pi_m\) is recovered as the limit of the iterated probabilities
 \begin{align*}
 (\pi_m)_j&=\lim_{r\to\infty} (P_{II,m}^r)_{i,j}=\lim_{r\to\infty} 
(P_{I,m}^r)_{i,j} 
 ,
 & i ,j&\in\{1,\ldots,m\} .
 \end{align*}
 Moreover, the convergence ratio, which is geometric, is given by
 \begin{align*}
 (P_{II,m}^r)_{i,j}-(\pi_m)_j
 =
 \dfrac{b_m^{(j-1)}}{
 b_m^{(i-1)}
 }
 \dfrac{B^{(i-1)}(x_{m,m-1})\mathcal A_{m}^{(j-1)}(x_{m,m-1})}{\sum^{m}_{l=1}B^{(l-1)}(x_{m,m-1})\mathcal A_{m}^{(l-1)}({x_{m,m-1}})}\left(\dfrac{x_{m,m-1}}{x_{m,m}}\right)^r
 + \cdots
 &\xrightarrow[r\to\infty]{} 0, \\
 (P_{I,m}^r)_{i,j}-(\pi_m)_j
 =
 \dfrac{a_{m}^{(j-1)}}{a_{m}^{(i-1)}}\dfrac{B^{(j-1)}(x_{m,m-1})\mathcal A_{m}^{(i-1)}(x_{m,m-1})}{\sum^{m}_{l=1}B^{(l-1)}(x_{m,m-1})\mathcal A_{m}^{(l-1)}({x_{m,m-1}})}\left(\dfrac{x_{m,m-1}}{x_{m,m}}\right)^r + \cdots
 &\xrightarrow[r\to\infty]{} 0.
 \end{align*}
\end{coro}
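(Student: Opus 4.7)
The plan is to read the two conclusions directly off the spectral formulas \eqref{formularepresentacionII} and \eqref{formularepresentacionI}, by splitting the sum over $k$ into the $k=m$ contribution and the remainder, and then identifying each piece.

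First, I would isolate the $k=m$ term. For $k=m$ the factor $x_{m,k}^r/x_{m,m}^r$ collapses to $1$, and since $B^{(l-1)}(x_{m,m})=b_m^{(l-1)}$ and $\mathcal{A}_m^{(l-1)}(x_{m,m})=a_m^{(l-1)}$, the $k=m$ contribution to \eqref{formularepresentacionII} is
\begin{align*}
\frac{b_m^{(j-1)}}{b_m^{(i-1)}}\,\frac{B^{(i-1)}(x_{m,m})\mathcal{A}_m^{(j-1)}(x_{m,m})}{\sum_{l=1}^m B^{(l-1)}(x_{m,m})\mathcal{A}_m^{(l-1)}(x_{m,m})}=\frac{b_m^{(j-1)}a_m^{(j-1)}}{\sum_{l=1}^m b_m^{(l-1)}a_m^{(l-1)}}=(\pi_m)_j,
\end{align*}
by Proposition \ref{propestacionariomultiple}. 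The same cancellation in \eqref{formularepresentacionI} yields exactly $(\pi_m)_j$ as well, because the $k=m$ summand is symmetric in the roles played by $B^{(j-1)}\mathcal{A}_m^{(i-1)}$ and $B^{(i-1)}\mathcal{A}_m^{(j-1)}$ once evaluated at $x_{m,m}$. Hence $(P_{II,m}^r)_{i,j}-(\pi_m)_j$ and $(P_{I,m}^r)_{i,j}-(\pi_m)_j$ are the same sums restricted to $k\in\{1,\ldots,m-1\}$, each term carrying the scalar factor $(x_{m,k}/x_{m,m})^r$.

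Next, I would argue that every such ratio lies strictly inside the unit disk. The interlacing of zeros gives $x_{m,k}<x_{m,m-1}<x_{m,m}$ for $k<m-1$, while the irreducibility and nonnegativity of $T_m$, together with the Perron--Frobenius Theorem \ref{PerronFrobenius}, identify $x_{m,m}$ as the simple, strictly dominant Perron eigenvalue in the ergodic regime. Consequently $|x_{m,k}/x_{m,m}|<1$ for all $k<m$, and each contribution decays geometrically, so the limit is zero. Reorganizing the remaining sum by decreasing modulus of the ratios, the leading term corresponds to $k=m-1$, which reproduces the displayed expressions for the leading geometric correction, with the ``$\,\cdots\,$'' collecting the strictly subdominant contributions from $k\in\{1,\ldots,m-2\}$.

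The only delicate point is the strict dominance $|x_{m,k}|<x_{m,m}$ for $k<m$. In the genuinely ergodic case this is immediate from Perron--Frobenius since aperiodicity guarantees uniqueness of the largest-modulus eigenvalue. In the periodic case one would need to refine the statement, since other zeros may share the modulus of $x_{m,m}$ up to a root of unity; but under the ergodic hypothesis tacit in the corollary, the argument as sketched suffices, and the entire proof reduces to invoking \eqref{formularepresentacionII}--\eqref{formularepresentacionI} and the identification of the $k=m$ term.
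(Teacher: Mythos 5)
Your proposal is correct and follows essentially the same route as the paper, whose entire proof is the one-line remark that the corollary ``follows from \eqref{formularepresentacionII} and \eqref{formularepresentacionI}''; you simply fill in the details (isolating the $k=m$ term, identifying it with $(\pi_m)_j$ via Proposition \ref{propestacionariomultiple}, and bounding the remaining ratios). Your closing caveat about strict spectral dominance in the non-aperiodic case is a genuine subtlety the paper glosses over (it even asserts in the following remark that the limit property persists for periods $2$ and $3$), but for the families treated here all zeros lie in the positive half-line, so $|x_{m,k}|<x_{m,m}$ holds and the argument goes through.
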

\begin{proof}
 It follows from \eqref{formularepresentacionII} and \eqref{formularepresentacionI}.
\end{proof}
\begin{rem}
 We observe that even when the Markov chain is not ergodic (we have situations with periods~\(2\) and \(3\)), the above limit property holds.
\end{rem}

\begin{lemma}[Detailed balance]
The following detailed balance equation is satisfied:
\begin{align}\label{eq:detalied_balance}
	 (\pi_m)_{k+1} (P_{II,m})_{k,l} &= (\pi_m)_{l+1} (P_{I,m})_{l,k}, 
 & k,l&\in\{1,\ldots,m\} .
\end{align}
\end{lemma}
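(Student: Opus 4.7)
The plan is to verify \eqref{eq:detalied_balance} by a direct computation, exploiting the fact that the two stochastic matrices are built from \emph{the same} underlying Hessenberg matrix $T_m$ via conjugation by different diagonal matrices, together with the product structure $(\pi_m)_{j+1} \propto a_m^{(j-1)} b_m^{(j-1)}$ of the steady state from Proposition~\ref{propestacionariomultiple}. The crucial observation is that $P_{I,m}$ involves $T_m^\top$, so the entry $(P_{I,m})_{l,k}$ naturally contains $(T_m^\top)_{l,k} = (T_m)_{k,l}$, matching the $T_m$-entry appearing in $(P_{II,m})_{k,l}$.

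First, I would unpack the definitions \eqref{estocasticaII} and \eqref{estocasticaI} entrywise. Writing $N \coloneq \sum_{l'=1}^m b_m^{(l'-1)} a_m^{(l'-1)}$ for the normalization constant in \eqref{estacionariomultiple}, and using that $\sigma_{II,m}$ and $\sigma_{I,m}$ are diagonal with entries $b_m^{(\cdot)}$ and $a_m^{(\cdot)}$ respectively, the conjugation structure gives
\begin{align*}
(P_{II,m})_{k,l} &= \frac{1}{x_{m,m}} \frac{b_m^{(l-1)}}{b_m^{(k-1)}} (T_m)_{k,l}, &
(P_{I,m})_{l,k} &= \frac{1}{x_{m,m}} \frac{a_m^{(k-1)}}{a_m^{(l-1)}} (T_m^\top)_{l,k} = \frac{1}{x_{m,m}} \frac{a_m^{(k-1)}}{a_m^{(l-1)}} (T_m)_{k,l}.
\end{align*}

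Next, I would multiply by the steady state entries. On the left-hand side of \eqref{eq:detalied_balance},
\begin{align*}
(\pi_m)_{k+1} (P_{II,m})_{k,l} = \frac{b_m^{(k-1)} a_m^{(k-1)}}{N} \cdot \frac{1}{x_{m,m}} \frac{b_m^{(l-1)}}{b_m^{(k-1)}} (T_m)_{k,l} = \frac{a_m^{(k-1)} b_m^{(l-1)} (T_m)_{k,l}}{N\, x_{m,m}},
\end{align*}
where the factor $b_m^{(k-1)}$ telescopes. Symmetrically, on the right-hand side the factor $a_m^{(l-1)}$ telescopes:
\begin{align*}
(\pi_m)_{l+1} (P_{I,m})_{l,k} = \frac{b_m^{(l-1)} a_m^{(l-1)}}{N} \cdot \frac{1}{x_{m,m}} \frac{a_m^{(k-1)}}{a_m^{(l-1)}} (T_m)_{k,l} = \frac{a_m^{(k-1)} b_m^{(l-1)} (T_m)_{k,l}}{N\, x_{m,m}}.
\end{align*}
Both expressions agree, establishing \eqref{eq:detalied_balance}.

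There is no real obstacle: the identity is essentially a bookkeeping statement made true by the deliberate design of $\sigma_{II,m}$, $\sigma_{I,m}$, and $\pi_m$. What the computation reveals conceptually, however, is the precise meaning of this ``mirrored'' detailed balance: because $(P_{II,m})$ and $(P_{I,m})$ come from $T_m$ and $T_m^\top$ respectively, the type I Markov chain plays exactly the role of the time-reversed chain of the type II Markov chain (matrix $Q$ in the time-reversal remark of the introduction), and vice versa, with $\pi_m$ as the common steady distribution. This duality between type I and type II multiple orthogonal polynomials is the structural reason behind the identity, and the telescoping cancellation is its algebraic manifestation.
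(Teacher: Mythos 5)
Your proof is correct and is essentially identical to the paper's: both unpack the entries of \( P_{II,m}\) and \( P_{I,m}\) as diagonal conjugations of \( T_m\) and \( T_m^\top\), substitute the product form of \( \pi_m\), and observe that both sides reduce to \( a_m^{(k-1)}b_m^{(l-1)}(T_m)_{k,l}/(N x_{m,m}) \). The only (harmless) difference is a bookkeeping choice in reconciling the paper's mixed zero-/one-based indexing of the matrix entries versus the steady-state vector.
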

\begin{proof}
Let us recall that
 \begin{align*}
 (P_{II,m} )_{k,l}&=\frac{1}{x_{m,m}}\frac{b_m^{(l)}}{b_m^{(k)}}T_{k,l}, &
 (P_{I,m} )_{k,l}&=\frac{1}{x_{m,m}}\frac{a_m^{(l)}}{a_m^{(k)}}T_{l,k}, & (\pi_m)_k&=\frac{ b_m^{(k-1)}a^{(k-1)}_{m}}{\sum_{l=1}^m b_m^{(l-1)}a^{(l-1)}_{m}},
 \end{align*}
so that \eqref{eq:detalied_balance} becomes
\begin{align*}
 b_m^{(k)}a^{(k)}_{m}\frac{b_m^{(l)}}{b_m^{(k)}}T_{k,l}&= b_m^{(l)}a^{(l)}_{m}\frac{a_m^{(k)}}{a_m^{(l)}}T_{k,l},
\end{align*}
which is an identity, and the result follows.
\end{proof}
\begin{pro}[Time-reversal]
 The Markov chains built on the stochastic matrices \( P_{I,m}\) and \( P_{II,m}\) are time-reversed versions of each other.
\end{pro}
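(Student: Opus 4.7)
The plan is to reduce the claim directly to the detailed balance identity \eqref{eq:detalied_balance} established in the previous lemma, combined with the definition of the time-reversed chain recalled in the remark on time reversal in Section~1.

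First I would recall that, given a stochastic matrix \( P\) with a strictly positive steady state \( \pi \), the time-reversed chain has transition matrix \( Q\) characterized by
\begin{align*}
 \pi_i Q_{i,j} &= \pi_j P_{j,i}, & i,j&\in\{1,\ldots,m\} ,
\end{align*}
and, conversely, any stochastic matrix \( Q\) satisfying this identity with \( \pi\) a probability vector must be the transition matrix of the time-reversal. Since, by Proposition~\ref{propestacionariomultiple}, the vector \( \pi_m\) in \eqref{estacionariomultiple} is the common (unique) steady state of both \( P_{II,m}\) and \( P_{I,m}\) and has strictly positive entries (because \( b_m^{(l-1)}>0\) and \( a_m^{(l-1)}>0\) by Perron--Frobenius applied to \( T_m\) and \( T_m^\top \), as used in the proofs of Theorems~\ref{propestocasticaII} and~\ref{propestocasticaI}), the hypothesis for the construction of the reversed chain is met.

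Next I would simply rewrite the detailed balance relation \eqref{eq:detalied_balance}, with the index shift \( k,l\mapsto i,j\) made explicit, in the form
\begin{align*}
 (\pi_m)_{i} (P_{II,m})_{i-1,j-1} &= (\pi_m)_{j} (P_{I,m})_{j-1,i-1},
\end{align*}
so that, after aligning the indexing used in the matrix entries (all in \( \{1,\ldots,m\} \)), one reads off
\begin{align*}
 (\pi_m)_{i} (P_{I,m})_{i,j} &= (\pi_m)_{j} (P_{II,m})_{j,i}.
\end{align*}
This is precisely the condition identifying \( P_{I,m}\) as the transition matrix of the time-reversal of the chain governed by \( P_{II,m} \), and by symmetry the converse as well. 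The claim then follows immediately.

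The only real obstacle in carrying out this argument cleanly is bookkeeping with the two index conventions (the polynomials \( B^{(n)}, \mathcal A_m^{(n)}\) start at \( n=0 \), while the matrix entries \( (P_{II,m})_{i,j}\) and \( (\pi_m)_i\) run over \( \{1,\ldots,m\}\)); one must verify that the shift appearing in \eqref{eq:detalied_balance} is consistent with the shift in the definition \eqref{estacionariomultiple} of \( \pi_m \), which it is. Apart from this indexing care, the result is an immediate consequence of the detailed balance lemma, so no further estimates or spectral arguments are required.
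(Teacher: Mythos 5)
Your proposal is correct and follows essentially the same route as the paper: the paper's proof is simply "it follows from the previous Lemma" (the detailed balance identity \eqref{eq:detalied_balance}) together with the standard characterization of the time-reversed chain, which is exactly what you spell out, including the positivity of the steady state and the index bookkeeping. No issues.
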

\begin{proof}
 It follows from the previous Lemma, see \cite[\S 2.4.2]{Bremaud}.
\end{proof}

Finally we consider, the possibility of a PBF of \( T_m\), see \cite{stbbm,JP}. Since \( T_m\) is a tetradiagonal matrix, this factorization has to be of the form 
\begin{align}
 \label{factorizacionbidiagonalmultiple}
 T_{m}&=L_{1,m}L_{2,m}U_{m}, 
\end{align}
where
 \begin{align*}
\begin{aligned}
 L_{1,m}&\coloneq
 \begin{bNiceMatrix}[small]
 1 & 0 & & \cdots & & 0\\
 a_2 & 1 & \Ddots [] & & & \vdots\\
 0 & a_5 & \ddots& & & \\
 \vdots & \ddots[shorten-end=0pt]& \Ddots[shorten-end=-6pt]& & & \\
 & & & & & 0\\[6pt]
 0 & \cdots & & 0 & a_{3m-4} & 1
 \end{bNiceMatrix},&
 L_{2,m}&\coloneq
 \begin{bNiceMatrix}[small]
 1 & 0 & & \cdots & & 0\\
 a_3 & 1 & \Ddots & & & \vdots\\
 0 & a_6 & \ddots& & & \\
 \vdots & \ddots[shorten-end=0pt] & \Ddots[shorten-end=-6pt]& & & \\
 & & & & & 0\\[6pt]
 0 & \cdots & & 0 & a_{3m-3} & 1
 \end{bNiceMatrix},
&
 U_{m}\coloneq\begin{bNiceMatrix}[small]
 a_1 & 1 & 0 & \cdots & & 0\\
 0 & a_4 & 1 & \Ddots[shorten-end=2pt] & & \vdots\\
 \vdots & \ddots& \Ddots[shorten-end=-8pt]& \ddots & & \\
 & & & & & 0\\[6pt]
 & & & & & 1\\[8pt]
 0 & \cdots& & & 0 & a_{3m-2}
 \end{bNiceMatrix} ,
\end{aligned}
\end{align*}
with \( a_n>0 \). 
For \( n\in\{0,\ldots,m-1\} \), we define by
\begin{align*}
 D_{II,2,m}&\coloneq
 \diag
 \left[
 \begin{NiceMatrix}
 \dfrac{1}{d_{II,2,0}} & \cdots & \dfrac{1}{d_{II,2,m-1}}
 \end{NiceMatrix}
 \right], 
&
d_{II,2,n}&\coloneq a_{3n+1}b_m^{(n)}+b_m^{(n+1)}, 
\end{align*}
and by
\begin{align*}
 D_{II,1,m}&\coloneq 
 \diag
 \left[
 \begin{NiceMatrix}
 \dfrac{1}{d_{II,1,0}} & \cdots & \dfrac{1}{d_{II,1,m-1}}
 \end{NiceMatrix}
 \right] ,
&
 d_{II,1,n}&\coloneq
 a_{3n-2}a_{3n}b_m^{(n-1)}+(a_{3n}+a_{3n+1})b_m^{(n)}+b_m^{(n+1)}.
\end{align*}

We have set \( a_{-2}=a_{-1}=a_0=0 \). Similarly,
for \( n\in\{1,\ldots,m\}\) we denote by 
\begin{align*}
 D_{I,2,m}&\coloneq
 \diag
 \left[
 \begin{NiceMatrix}
 \dfrac{1}{d_{I,2,1}} & \cdots & \dfrac{1}{d_{I,2,m}}
\end{NiceMatrix}
 \right]
 , &
 d_{I,2,n}&\coloneq a_{m}^{(n)}+a_{3n-1} a_{m}^{(n+1)},
\end{align*}
and by 
\begin{align*}
 D_{I,1,m}&\coloneq
 \diag
 \left[
 \begin{NiceMatrix}
 \dfrac{1}{d_{I,1,1}} & \cdots &\dfrac{1}{d_{I,1,m}}
 \end{NiceMatrix}
 \right], 
&
 d_{I,1,n}&\coloneq a_{m}^{(n)}+(a_{3n-1}+a_{3n}) a_{m}^{(n+1)}+a_{3n}a_{3n+2} a_{m}^{(n+2)}.
\end{align*}

\begin{pro}[Pure Birth-Pure Death Factorization]
 \label{profactorizacionestocasticamultiple}
 The PBF \eqref{factorizacionbidiagonalmultiple} can be used to obtain stochastic bidiagonal factorizations of the stochastic matrices \( P_{II,m}\) and \( P_{I,m}\) as follows:
 \begin{align*}
 P_{II,m} &= \Pi_{II,1,m} \Pi_{II,2,m} \Upsilon_{II,m}, &
 P_{I,m} &= \Upsilon_{I,m} \Pi_{I,2,m} \Pi_{I,1,m},
 \end{align*}
 where the pure birth and pure death transition matrices are given by:
 \begin{align*}
 \Pi_{II,1,m} &\coloneq \frac{1}{x_{m,m}} \sigma_{II,m}^{-1} L_{1,m} D_{II,1,m}^{-1}, &
 \Pi_{II,2,m} &\coloneq D_{II,1,m} L_{2,m} D_{II,2,m}^{-1}, \\
 \Upsilon_{II,m} &\coloneq D_{II,2,m} U_m \sigma_{II,m}, &
 \Upsilon_{I,m} &\coloneq \frac{1}{x_{m,m}} \sigma_{I,m}^{-1} U_m^\top D_{I,1,m}^{-1}, \\
 \Pi_{I,2,m} &\coloneq D_{I,1,m} L_{2,m}^\top D_{I,2,m}^{-1}, &
 \Pi_{I,1,m} &\coloneq D_{I,2,m} L_{1,m}^\top \sigma_{I,m}.
 \end{align*}
\end{pro}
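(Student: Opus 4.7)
The strategy is to mimic the scalar proof of Proposition \ref{profactorizacionestocasticaescalar}. Substituting the PBF \eqref{factorizacionbidiagonalmultiple} into \eqref{estocasticaII} and inserting identity factors $D^{-1}D$ between the bidiagonal blocks yields
\begin{align*}
 P_{II,m}
 = \Bigl(\tfrac{1}{x_{m,m}}\sigma_{II,m}^{-1}L_{1,m}D_{II,1,m}^{-1}\Bigr)\Bigl(D_{II,1,m}L_{2,m}D_{II,2,m}^{-1}\Bigr)\Bigl(D_{II,2,m}U_m\sigma_{II,m}\Bigr)
 = \Pi_{II,1,m}\,\Pi_{II,2,m}\,\Upsilon_{II,m},
\end{align*}
and the corresponding identity for $P_{I,m}$ follows by using $T_m^\top = U_m^\top L_{2,m}^\top L_{1,m}^\top$ in \eqref{estocasticaI}. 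Nonnegativity of each factor is immediate: the bidiagonal coefficients $a_k$ are positive by hypothesis of the PBF, the entries of $\sigma_{II,m}$ and $\sigma_{I,m}$ are positive by the Perron--Frobenius arguments in Theorems \ref{propestocasticaII} and \ref{propestocasticaI}, and the diagonal entries of the $D$-matrices are positive since the $d$-quantities are sums or products of positive terms.

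Stochasticity is then established by a bootstrap. First verify, by a direct row-sum computation, that $\Upsilon_{II,m}$ is row-stochastic: the row sums of $U_m\sigma_{II,m}$ are precisely the $d_{II,2,n}$, which are cancelled by $D_{II,2,m}$. Next observe that the row sums of $L_{2,m}D_{II,2,m}^{-1}$ simplify, using the definition of $d_{II,1,n}$ together with the convention $a_{-2}=a_{-1}=a_0=0$, to the $d_{II,1,n}$, so that $\Pi_{II,2,m}=D_{II,1,m}L_{2,m}D_{II,2,m}^{-1}$ is row-stochastic. Since $P_{II,m}e=e$ by Theorem \ref{propestocasticaII} and $\Pi_{II,2,m}\Upsilon_{II,m}$ is stochastic as a product of stochastic matrices, one concludes
\begin{align*}
 \Pi_{II,1,m}\,e = \Pi_{II,1,m}\,\Pi_{II,2,m}\,\Upsilon_{II,m}\,e = P_{II,m}\,e = e,
\end{align*}
so $\Pi_{II,1,m}$ is row-stochastic as well. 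The type I decomposition is handled analogously, with $d_{I,2,n}$ and $d_{I,1,n}$ serving as the row-sum normalizers of $U_m^\top\sigma_{I,m}$ and $L_{2,m}^\top D_{I,2,m}^{-1}$ respectively, and stochasticity of $\Pi_{I,1,m}$ recovered from stochasticity of $P_{I,m}$.

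The principal obstacle is purely combinatorial: one must verify that the quantities $d_{II,1,n}, d_{II,2,n}, d_{I,1,n}, d_{I,2,n}$ defined \emph{ad hoc} in the statement coincide \emph{exactly} with the row sums of the intermediate products. This reduces to expanding the products of the bidiagonal matrices and comparing term by term; the computation is elementary but requires careful bookkeeping on the index shifts of the $a_k$ sequence. Finally, the shape of each factor (lower bidiagonal for the $\Pi$'s, upper bidiagonal for the $\Upsilon$'s) follows by inspection, since the $\sigma$- and $D$-matrices are diagonal and do not alter the bandedness of $L_{1,m}, L_{2,m}, U_m$; this confirms that each factor indeed models a pure birth or pure death chain.
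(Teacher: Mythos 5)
Your proof is correct and follows essentially the same route as the paper: substitute the PBF into the definition of the stochastic matrix, insert the identities $D^{-1}D$ between the bidiagonal blocks, observe that the $D$-matrices are defined precisely so that $\Upsilon_{II,m}$ and $\Pi_{II,2,m}$ (respectively $\Pi_{I,1,m}$ and $\Pi_{I,2,m}$) are stochastic, and recover stochasticity of the remaining factor from $P_{II,m}e=e$ (respectively $P_{I,m}e=e$). Your explicit remark that the $d$-quantities must be checked to coincide with the row sums of the intermediate products is a point the paper leaves implicit, but it is the same argument.
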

\begin{proof}
 We will follow the same reasoning as in Proposition \ref{profactorizacionestocasticaescalar}. Let's start with \( P_{II,m} \). As we know, we can write it as
 \begin{align*}
 P_{II,m} &= \frac{1}{x_{m,m}} \sigma_{II,m}^{-1} T_m \sigma_{II,m} = \frac{1}{x_{m,m}} \sigma_{II,m}^{-1} L_{1,m} L_{2,m} U_m \sigma_{II,m} \\
 &= \frac{1}{x_{m,m}} \sigma_{II,m}^{-1} L_{1,m} D_{II,1,m}^{-1} D_{II,1,m} L_{2,m} D_{II,2,m}^{-1} D_{II,2,m} U_m \sigma_{II,m} = \Pi_{II,1,m} \Pi_{II,2,m} \Upsilon_{II,m},
 \end{align*}
 where \( D_{II,2,m}\) is a diagonal \( m\times m\) matrix such that \( \Upsilon_{II,m}\) is stochastic, and \( D_{II,1,m}\) is also a diagonal \( m\times m\) matrix obtained by imposing \( \Pi_{II,2,m}\) to be stochastic. 
Finally, 
 as \( P_{II,m}\) is stochastic we have 
\begin{align*}
\begin{bNiceMatrix}
1 & \Cdots & 1
\end{bNiceMatrix}^\top
=
P_{II,m} 
\begin{bNiceMatrix}
1 & \Cdots & 1
\end{bNiceMatrix}^\top
= 
\Pi_{II,1,m} 
\Pi_{II,2,m}
 \Upsilon_{II,m}
\begin{bNiceMatrix}
1 & \Cdots & 1
\end{bNiceMatrix}^\top
=
\Pi_{II,1,m}
\begin{bNiceMatrix}
1 & \Cdots & 1
\end{bNiceMatrix}^\top ,
\end{align*}
and so \( \Pi_{II,1,m}\) is stochastic.
 \( \Pi_{II,1,m}\) and \( \Pi_{II,2,m}\) are lower bidiagonal, while \( \Upsilon_{II,m}\) is upper bidiagonal.
 
 With a completely analogous procedure, we can define a type I stochastic bidiagonal factorization of \( P_{I,m}\) as follows:
 \begin{align*}
 P_{I,m} &= \frac{1}{x_{m,m}} \sigma_{I,m}^{-1} T_m^\top \sigma_{I,m}= \frac{1}{x_{m,m}} \sigma_{I,m}^{-1} U_m^\top L_{2,m}^\top L_{1,m}^\top \sigma_{I,m} 
 \\
 & = \frac{1}{x_{m,m}} \sigma_{I,m}^{-1} U_m^\top D_{I,1,m}^{-1} D_{I,1,m} L_{2,m}^\top D_{I,2,m}^{-1} D_{I,2,m} L_{1,m}^\top \sigma_{I,m} = \Upsilon_{I,m} \Pi_{I,2,m} \Pi_{I,1,m},
 \end{align*}
 where
 \( \Pi_{I,1,m}\) is stochastic by an appropriate chosen of \( D_{I,2,m} \),
 \( \Pi_{I,2,m}\) is stochastic by an appropriate chosen of \( D_{I,1,m} \),
 and\( \Upsilon_{I,m}\) is stochastic since 
\begin{align*}
\begin{bNiceMatrix}
1 & \Cdots & 1
\end{bNiceMatrix}^\top
=
P_{I,m} 
\begin{bNiceMatrix}
1 && \Cdots & 1
\end{bNiceMatrix}^\top
= 
\Upsilon_{I,m} \Pi_{I,2,m} \Pi_{I,1,m} 
\begin{bNiceMatrix}
1 & \Cdots & 1
\end{bNiceMatrix}^\top
=
\Upsilon_{I,m}
\begin{bNiceMatrix}
1 & \Cdots & 1
\end{bNiceMatrix}^\top .
\end{align*}
In this case, \( \Pi_{I,1,m}\) and \( \Pi_{I,2,m}\) are upper bidiagonal, while \( \Upsilon_{I,m}\) is lower bidiagonal.
\end{proof}

\begin{rem}
Here, \( \Pi_{II,1,m}\) and \( \Pi_{II,2,m}\) are transition matrices for pure birth Markov chains, while \( \Upsilon_{II,m}\) is a transition matrix for a pure death Markov chain.
\end{rem}


\section[Finite Markov chains in the multiple Askey scheme]{Finite Markov chains beyond birth and death in the multiple Askey scheme}
Let us now apply this general strategy to the families of multiple orthogonal polynomials, specifically focusing on the Hahn multiple orthogonal polynomials and its descendants within the Askey scheme \cite{AskeyII, Arvesu, HahnI}, for which the recurrence tetradiagonal matrix is nonnegative. This includes the Jacobi--Piñeiro, multiple Meixner of the second kind, and multiple Laguerre of the first kind.

In contrast to the standard Askey scheme, where all the descendants admit a Markov chain, in the multiple scenario, only approximately half of them have such a stochastic matrix.
\begin{center}
 \begin{tikzpicture}[node distance=3cm]
 \node[fill=ForestGreen!35,block] (a) {Jacobi--Piñeiro}; 
 \node[ fill=ForestGreen!35,block, right of=a] (b) {Meixner II};
 \node[ fill=Red!55,block, right of = b] (c) {Meixner I};
 \node[ fill=Red!55,block, right of=c] (f) {Kravchuk}; 
 \node[ fill=ForestGreen!35,block,above of = c, left =.85] (d) {Hahn};
 \node[fill=ForestGreen!35,block, below of = b] (e) {Laguerre I};
 \node[fill=Red!55,block, below of = a] (g) {Laguerre II};
 \node[ fill=Red!55,block, below of =f] (i) {Charlier};
 \node[ fill=Red!55, block, below of =c] (j) {Hermite};
 \draw[-latex] (d)--(a); 
 \draw[-latex] (d)--(b);
 \draw[-latex] (d)--(c);
 \draw[-latex] (d)--(f);
 \draw[-latex] (a)--(e);
 \draw[-latex] (b)--(e);
 \draw[-latex] (c)--(i);
 \draw[-latex] (c)--(g);
 \draw[-latex] (f)--(i);
 \draw[-latex] (a)--(g);
 \draw[dashed,->] (e)--(j);
 \draw[dashed,->] (a)--(j);
 \draw[dashed,->] (f)--(j);
 \draw[dashed,->] (i)--(j);
 \draw (4.5,-4.8) node
 {\begin{minipage}{0.7\textwidth}
 \begin{center}\small
 \textbf{\small
 Descendants of Hahn in the multiple Askey scheme} \\
 In green those with nonnegative Hessenberg matrices
 \end{center}
 \end{minipage}};
 \end{tikzpicture}
\end{center}
Similar to the previous section, all of these polynomial families require the use of the generalized hypergeometric series \eqref{eq:generalized_hypergeometric_series}. However, in the multiple case, we need to go a step further since many of these polynomials are expressed in terms of the Kampé de Fériet series \cite{Srivastava, slater}:
\begin{multline*}
 \KF{p:r;s}{q:n;k}{(a_1,\ldots,a_p):(b_1,\ldots,b_r);(c_1,\ldots,c_s)}{(\alpha_1,\ldots,\alpha_q):(\beta_1,\ldots,\beta_n);(\gamma_1,\ldots,\gamma_k)}{x,y}
 \\
=
\sum_{l=0}^{\infty}\sum_{m=0}^{\infty}\dfrac{(a_1)_{l+m}\cdots(a_p)_{l+m}}{(\alpha_1)_{l+m}\cdots(\alpha_q)_{l+m}}\dfrac{(b_1)_l\cdots(b_r)_l}{(\beta_1)_l\cdots(\beta_n)_l}\dfrac{(c_1)_m\cdots(c_s)_m}{(\gamma_1)_m\cdots(\gamma_k)_m}\dfrac{x^l}{l!}\dfrac{y^m}{m!}.
\end{multline*}
For each polynomial family, we obtain examples of both type II and type I chains with \(7\) or \(6\) states. We also provide the corresponding bidiagonal factorizations and steady states.
As mentioned earlier for the tridiagonal case, a similar approach is 
used in this scenario. The strategy employed in this study is as follows: our main objective is to numerically approximate the largest zero, denoted as \( x_{m,m} \), of the orthogonal polynomial
\( B^{(m)}(x) \).
To achieve this, we utilize the explicit hypergeometric representation of the orthogonal polynomials along with the given nonnegative recursion coefficients. By employing these expressions, we calculate numerical approximations for various quantities, including the associated stochastic matrix \( P_{I,m}\) and \( P_{II,m} \), the steady state of the Markov chain, the expected return times, and the stochastic factorization into pure birth and pure death factors.

Let us emphasize again that in order to carry out these computations accurately and efficiently, we have developed a dedicated Mathematica code that is specifically tailored for this task. The code incorporates the required algorithms and numerical techniques to effectively handle the calculations involved in obtaining the desired approximations.


\subsection{Multiple Hahn Markov chains}
The multiple Hahn polynomials of type II, denoted as 
\( Q_{(n_1,n_2)}=Q_{(n_1,n_2)}(x;\alpha_1,\alpha_2,\beta,N)\)
 and of type I, denoted as
\( Q_{(n_1,n_2),i}=Q_{(n_1,n_2),i}(x;\alpha_1,\alpha_2,\beta,N) \), 
for \( i \in\{1,2\} \), respectively, satisfy their respective discrete orthogonality relations, given by:
\begin{align*}
& \sum_{k=0}^{N}(-N+k)_jQ_{(n_1,n_2)}(k)w_i(k) =0, 
 \quad j \in \{0,\ldots,n_i-1\} , \quad i\in\{1,2\} ,
 \\
& \sum_{k=0}^{N}(-N+k)_j\left(Q_{(n_1,n_2),1}(k)w_1(k)+Q_{(n_1,n_2),2}
 (k)w_2(k)\right)
 =0, \quad j \in\{0,\ldots,n_1+n_2-2\} ,
\end{align*}
 where the weight functions are defined as
\( w_i
 (x;\alpha_i,\beta,N)
\coloneq \dfrac{\Gamma(\alpha_i+x+1)}{\Gamma(\alpha_i+1)\Gamma(x+1)}\dfrac{\Gamma(\beta+N-x+1)}{\Gamma(\beta+1)\Gamma(N-x+1)} \), 
for \( i \in\{1,2\} \). These orthogonality relations hold over the set \( \lbrace0,\ldots,N\rbrace \), with \( N\in\mathbb N_0 \), \( \alpha_1,\alpha_2,\beta>-1 \), and to ensure an AT type system, the condition \( \alpha _1-\alpha_2\not\in\mathbb Z\) must be satisfied.

The coefficients \( b_{m}(\alpha_1,\alpha_2,\beta,N), c_{m}(\alpha_1,\alpha_2,\beta,N)\) and \( d_{m}(\alpha_1,\alpha_2,\beta,N)\) of the recurrence relations~\eqref{recurrenciaII} and~\eqref{recurrenciaI} can be found in~\cite[section 4.5]{Arvesu}:
\begin{align*}
 b_{2m}&=\begin{multlined}[t][.7\textwidth]
 A(m,m,\alpha_1,\alpha_2,\beta,N)+A(m,m,\alpha_2,\alpha_1+1,\beta,N)\\+C(m,m+1,\alpha_1,\alpha_2,\beta,N)+D(m,m,\alpha_1,\alpha_2,\beta,N),
 \end{multlined}\\
 b_{2m+1}&=\begin{multlined}[t][0.7\textwidth]
 A(m,m+1,\alpha_2,\alpha_1,\beta,N)+A(m+1,m,\alpha_1,\alpha_2+1,\beta,N)
 \\
 +C(m+1,m+2,\alpha_2,\alpha_1,\beta,N)+D(m,m+1,\alpha_2,\alpha_1,\beta,N),
 \end{multlined}\\
 c_{2m}&=\begin{multlined}[t][.7\textwidth]
 (A(m,m,\alpha_1,\alpha_2,\beta,N)+A(m,m,\alpha_2,\alpha_1+1,\beta,N)\\+D(m,m,\alpha_1,\alpha_2,\beta,N)) {C}(m,m+1,\alpha_2,\alpha_1,\beta,N),
 \\
 +A(m,m,\alpha_1,\alpha_2,\beta,N)B(m,m,\alpha_1,\alpha_2,\beta,N),
 \end{multlined}\\
 c_{2m+1}&=\begin{multlined}[t][.7\textwidth]
 (A(m,m+1,\alpha_2,\alpha_1,\beta,N)+A(m+1,m,\alpha_1,\alpha_2+1,\beta,N)
 \\
 +D(m,m+1,\alpha_2,\alpha_1,\beta,N)){C}(m+1,m+1,\alpha_1,\alpha_2,\beta,N)
 \\
+A(m,m+1,\alpha_2,\alpha_1,\beta,N)B(m,m+1,\alpha_2,\alpha_1,\beta,N),
 \end{multlined}\\
 d_{2m}&=A(m,m,\alpha_1,\alpha_2,\beta,N)B(m,m,\alpha_1,\alpha_2,\beta,N)C(m,m,\alpha_1,\alpha_2,\beta,N),\\
 d_{2m+1}&=\begin{multlined}[t][.8\textwidth]
 A(m,m+1,\alpha_2,\alpha_1,\beta,N)B(m,m+1,\alpha_2,\alpha_1,\beta,N) C(m,m+1,\alpha_2,\alpha_1,\beta,N)
 \end{multlined}.
\end{align*}
where 
\begin{align}
\label{coefsauxiliares} 
\begin{aligned}
 A&=\dfrac{n_1(n_1+n_2+\alpha_2+\beta)(n_1+n_2+\beta)(N+n_1+\alpha_1+\beta+1)}{(n_1+2n_2+\alpha_2+\beta)(2n_1+n_2+\alpha_1+\beta)(2n_1+n_2+\alpha_1+\beta+1)},\\
 B&=\dfrac{(n_1+\alpha_1-\alpha_2)(n_1+n_2+\alpha_1+\beta)(n_1+n_2+\beta-1)(N-n_1-n_2+1)}{(n_1+2n_2+\alpha_2+\beta-1)(2n_1+n_2+\alpha_1+\beta)(2n_1+n_2+\alpha_1+\beta-1)},\\
 C&=\dfrac{(n_1+\alpha_1)(n_1+n_2+\alpha_1+\beta-1)(n_1+n_2+\alpha_2+\beta-1)(N-n_1-n_2+2)}{(n_1+2n_2+\alpha_2+\beta-2)(2n_1+n_2+\alpha_1+\beta-2)(2n_1+n_2+\alpha_1+\beta-1)},\\
 D&=\dfrac{n_1n_2(n_1+n_2+\beta)}{(2n_1+n_2+\alpha_1+\beta+1)(n_1+2n_2+\alpha_2+\beta)}.
\end{aligned}
\end{align}

\begin{pro}
 The recurrence matrix \( T_m\) is nonnegative whenever
 \begin{align*}
 -1<\alpha_1-\alpha_2<1.
 \end{align*}
\end{pro}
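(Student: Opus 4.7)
The plan is to inspect each of the four auxiliary blocks $A, B, C, D$ defined in \eqref{coefsauxiliares} and to verify factor by factor that, under the standing assumptions on the parameters together with the hypothesis $-1 < \alpha_1 - \alpha_2 < 1$, every summand entering $b_n$, $c_n$, $d_n$ is nonnegative. Since each of $b_n, c_n, d_n$ is a (nonnegative linear) combination of products of evaluations of $A, B, C, D$, this will immediately yield the nonnegativity of $T_m$.

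First I would handle $A$, $C$, and $D$. Each of them is a rational function in $n_1,n_2,\alpha_1,\alpha_2,\beta,N$ whose factors are linear combinations of these parameters. Using $\alpha_1,\alpha_2,\beta>-1$, $N\in\mathbb N_0$, and the admissible range $n_1+n_2\le N$ for the multi-indices in the stepline, I would check each numerator and denominator factor one by one. The only a priori problematic factors are those containing small shifts such as $n_1+n_2+\beta-1$ or $n_1+n_2+\alpha_i+\beta-2$; but wherever such a factor could turn negative, the same block carries an overall multiplicative $n_1$ or $n_1n_2$ (in $A$ and $D$) that forces the contribution to vanish, and in $C$ one verifies that for the specific pairs $(n_1,n_2)$ arising in $b_n,c_n,d_n$ the argument shifts are already nonnegative. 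This disposes of $A, C, D$ without using the condition on $\alpha_1-\alpha_2$.

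The main obstacle is $B$. The numerator of $B(n_1,n_2,\alpha_1,\alpha_2,\beta,N)$ contains the factor $(n_1+\alpha_1-\alpha_2)$, whose sign is not controlled by the standing assumptions. Here I would use two observations: (a) $B$ only appears multiplied by $A$ in the definitions of $c_n$ and $d_n$, and $A$ carries an explicit factor $n_1$, so the product vanishes at $n_1=0$ and the sign of $B$ is only relevant when $n_1\ge 1$; (b) in that regime, $(n_1+\alpha_1-\alpha_2)\ge 1+\alpha_1-\alpha_2$, which is positive exactly when $\alpha_1-\alpha_2>-1$. Applying the same argument to the occurrences of $B$ with \emph{swapped} parameters $(\alpha_2,\alpha_1)$, which show up in $b_{2m+1}, c_{2m+1}$, and $d_{2m+1}$, produces the complementary bound $\alpha_2-\alpha_1>-1$, i.e.\ $\alpha_1-\alpha_2<1$. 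The intersection of the two conditions is precisely the hypothesis of the proposition.

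The delicate bookkeeping lies in step two: one must verify that no denominator appearing in $A, B, C, D$ vanishes or flips sign for any admissible $(n_1,n_2)$ with $n_1+n_2\le N$, which is where the AT assumption $\alpha_1-\alpha_2\notin\mathbb Z$, together with $\alpha_i+\beta>-2$ and the upper bound $n_1+n_2\le N$, comes into play to rule out degeneracies. Once those sign checks are in place, adding the resulting nonnegative contributions gives $b_n,c_n\ge 0$ and $d_n>0$, and the conclusion follows.
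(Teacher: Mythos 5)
The paper states this proposition with no proof at all, so the only question is whether your verification actually closes. Your central observation is correct and is indeed the crux: among the four blocks in \eqref{coefsauxiliares}, only $B$ carries a factor whose sign depends on $\alpha_1-\alpha_2$, namely $(n_1+\alpha_1-\alpha_2)$; $B$ enters $c_n$ and $d_n$ only through the products $A\cdot B$ and $A\cdot B\cdot C$ evaluated at the same multi-index, the explicit factor $n_1$ in $A$ kills these products unless $n_1\ge 1$, and then $n_1+\alpha_1-\alpha_2\ge 1+\alpha_1-\alpha_2>0$ precisely when $\alpha_1-\alpha_2>-1$, while the $\alpha$-swapped occurrences in $c_{2m+1}$ and $d_{2m+1}$ give the mirror bound $\alpha_1-\alpha_2<1$. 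That is the right mechanism for the band, and all remaining factors of $B$ are positive once $n_1\ge 1$.

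The gap is in the part you defer to ``bookkeeping,'' where one of the shortcuts you offer is false. You claim that for the index pairs actually occurring, the argument shifts in $C$ ``are already nonnegative.'' They are not: $C(1,1,\alpha_1,\alpha_2,\beta,N)$, which enters $c_1$ and $d_2$, contains the numerator factors $1+\alpha_1+\beta$ and $1+\alpha_2+\beta$, both negative when $\alpha_i,\beta$ are close to $-1$. The value is nonetheless positive, but only because those same factors reappear verbatim in the denominator and cancel, leaving $C(1,1,\alpha_1,\alpha_2,\beta,N)=(1+\alpha_1)N/(2+\alpha_1+\beta)$; so a termwise sign check of the factors does not work, and the proof must first simplify each occurrence of $A,B,C,D$ at its specific arguments. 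The small indices are genuinely delicate: taken at face value, the term $C(m,m+1,\alpha_1,\alpha_2,\beta,N)$ in $b_{2m}$ at $m=0$ simplifies to $\alpha_1(N+1)/(\alpha_1+\beta-1)$, which is negative for, e.g., $\alpha_1=\tfrac12$, $\beta=0$ — so the deferred verification either uncovers a misprint in the quoted coefficients (the Jacobi--Piñeiro display below uses $C(m+1,m+1,\dots)$, which does simplify to the positive $(1+\alpha_1)N/(2+\alpha_1+\beta)$) or a genuine failure; either way it cannot be waved through. Similarly, for $A$ and $D$ at $n_1=0$ or $n_1n_2=0$ your argument gives ``zero times an expression of uncontrolled sign,'' which is fine but is not the same as the factors being nonnegative. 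Since the whole proposition is a finite collection of sign checks concentrated at $n_1+n_2\le 2$, the proof lives exactly in those cases, and as written your proposal describes the computation without performing the steps where it could fail.
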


In \cite{Bidiagonal_factorization}, we proved the following result:
\begin{teo}\label{Pro:PBF_Hahn}
The recurrence matrix \( T_m\) admits a PBF in the form \eqref{factorizacionbidiagonalmultiple} whenever 
\begin{align*}
 -1<\alpha_1-\alpha_2<0
\end{align*} 
with coefficients \( a_{n}(\alpha_1,\alpha_2,\beta,N)\) given by
\begin{align*}
 a_{6n+1}&=
 \dfrac{(N-2n)(\alpha_1+1+n)(\alpha_1+\beta+2n+1)(\alpha_2+\beta+2n+1)}{(\alpha_1+\beta+3n+1)_2(\alpha_2+\beta+3n+1)},\\
 a_{6n+4}&=\dfrac{(N-2n-1)(\alpha_2+1+n)(\alpha_1+\beta+2n+2)(\alpha_2+\beta+2n+2)}{(\alpha_1+\beta+3n+3)(\alpha_2+\beta+3n+2)_2},\\
a_{6n+2}&=\begin{multlined}[t][.7\textwidth]\dfrac{(N-2n)(n)_n(\beta+2n+1)(\alpha_2-\alpha_1+n)(\alpha_2+\beta+n+1)}{(n+1)_n(\alpha_1+\beta+3n+2)(\alpha_2+\beta+3n+1)_2}
 \dfrac{\pFq{3}{2}{-n,-N,\alpha_2-\alpha_1-n}{-2n+1,\alpha_2+\beta+n+1}{1}}{\pFq{3}{2}{-n,-N,\alpha_2-\alpha_1-n}{-2n,\alpha_2+\beta+n+2}{1}},\end{multlined}\\
 a_{6n+5}&=\begin{multlined}[t][.87\textwidth]\dfrac{(n+1)(N-2n-1)(\beta+2n+2)(\alpha_1-\alpha_2+n+1)(\alpha_1+\beta+2+n+N)}{(2n+1)(\alpha_1+\beta+3n+3)_2(\alpha_2+\beta+3n+3)}
 \dfrac{\pFq{3}{2}{-n,-N,\alpha_2-\alpha_1-n}{-2n,\alpha_2+\beta+n+2}{1}}{\pFq{3}{2}{-n-1,-N,\alpha_2-\alpha_1-n-1}{-2n-1,\alpha_2+\beta+n+2}{1}},\end{multlined}\\
 a_{6n+3}&=\begin{multlined}[t][.8\textwidth]\dfrac{(2n+1)(\beta+2n+1)(\alpha_1+\beta+2n+2)(\alpha_2+\beta+2n+2)}{(\alpha_1+\beta+3n+2)_2(\alpha_2+\beta+3n+2)}
 \dfrac{\pFq{3}{2}{-n-1,-N,\alpha_2-\alpha_1-n-1}{-2n-1,\alpha_2+\beta+n+2}{1}}{\pFq{3}{2}{-n,-N,\alpha_2-\alpha_1-n}{-2n,\alpha_2+\beta+n+2}{1}},\end{multlined}\\
 a_{6n+6}&=\begin{multlined}[t][.87\textwidth]\dfrac{2(n+1)(\beta+2n+2)(\alpha_1+\beta+2n+3)(\alpha_2+\beta+2n+3)(\alpha_2+\beta+2+n+N)}{(\alpha_1+\beta+3n+4)(\alpha_2+\beta+3n+3)_2(\alpha_2+\beta+n+2)} \dfrac{\pFq{3}{2}{-n-1,-N,\alpha_2-\alpha_1-n-1}{-2n-2,\alpha_2+\beta+n+3}{1}}{\pFq{3}{2}{-n-1,-N,\alpha_2-\alpha_1-n-1}{-2n-1,\alpha_2+\beta+n+2}{1}}.\end{multlined}
\end{align*}
\end{teo}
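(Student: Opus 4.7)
The plan is to verify the factorization $T_m = L_{1,m} L_{2,m} U_m$ by computing the triple product of the bidiagonal factors and matching its entries with the known multiple Hahn recurrence coefficients $b_n, c_n, d_n$ built from the quantities $A, B, C, D$ in~\eqref{coefsauxiliares}, and then to prove that every $a_n$ given by the stated formulas is strictly positive under the restriction $-1 < \alpha_1 - \alpha_2 < 0$.

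First, a direct symbolic computation of $L_{1,m} L_{2,m} U_m$ yields a tetradiagonal matrix whose nonzero entries in row $n$ read: superdiagonal $1$; diagonal $a_{3n-1} + a_{3n} + a_{3n+1}$; subdiagonal $a_{3n-1}a_{3n-3} + a_{3n-1}a_{3n-2} + a_{3n}a_{3n-2}$; sub-subdiagonal $a_{3n-1}a_{3n-3}a_{3n-5}$, with the boundary convention $a_k = 0$ for $k \le 0$. Matching these expressions against $b_n, c_n, d_n$ gives a triangular-in-$n$ nonlinear algebraic system in the $a_k$, which can be solved inductively. The natural ansatz, motivated by the tridiagonal case in which the PBF coefficients are ratios $p_n(0)/p_{n-1}(0)$, is that the $a_{6n+j}$ are ratios of consecutive multiple Hahn polynomials evaluated at $x = 0$ after suitable parameter shifts. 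This explains why $a_{6n+1}$ and $a_{6n+4}$ are purely rational (they correspond to Darboux-type steps between adjacent multi-indices that factorize cleanly), whereas $a_{6n+2}, a_{6n+3}, a_{6n+5}, a_{6n+6}$ involve the $\pFq{3}{2}{-n,-N,\alpha_2-\alpha_1-n}{-2n,\alpha_2+\beta+n+2}{1}$ series, which are precisely the hypergeometric values of $Q^{(n)}(0)$ that encode the mixing between $w_1$ and $w_2$. With this identification, the verification of the factorization reduces to a finite list of contiguous hypergeometric identities at unit argument, tractable via Whipple-type transformations of Saalschützian $\pFq{3}{2}$ series.

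The genuine obstacle is the positivity of the $\pFq{3}{2}$ factors appearing in $a_{6n+2}, a_{6n+3}, a_{6n+5}, a_{6n+6}$. All rational prefactors in the six formulas are manifestly positive when $-1 < \alpha_1 - \alpha_2 < 0$, $\beta > -1$, $\alpha_1, \alpha_2 > -1$, and $n$ stays within the admissible range dictated by $N$. However, since the first upper parameter $-n$ is a negative integer, each such series is a finite alternating sum, and term-by-term positivity fails. The cleanest route is to identify every $\pFq{3}{2}$ at $1$ with the value of a specific multiple Hahn polynomial (of type I or type II) at $x = 0$, and then to invoke the AT-system interlacing of zeros of the type II polynomials $B^{(n)}$, all of which lie in the open interval $(0, N)$. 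This forces a controlled alternating sign pattern in $n$ at $x = 0$ whose consecutive ratios produce strictly positive $a_n$. The restriction $-1 < \alpha_1 - \alpha_2 < 0$ is precisely the range in which this sign pattern persists uniformly across both weights; outside it the ratios change sign and no PBF with positive entries can exist.
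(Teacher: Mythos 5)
First, a point of comparison: the paper does not actually prove this theorem here; it is imported verbatim from the companion paper \cite{Bidiagonal_factorization}, so there is no internal proof to measure your argument against. Judged on its own terms, your setup is correct: the expansion of \( L_{1,m}L_{2,m}U_m \) that you give (superdiagonal \(1\), diagonal \(a_{3n-1}+a_{3n}+a_{3n+1}\), first subdiagonal \((a_{3n-1}+a_{3n})a_{3n-2}+a_{3n-1}a_{3n-3}\), second subdiagonal \(a_{3n-1}a_{3n-3}a_{3n-5}\)) is the right one, and matching it against the coefficients \(b_n,c_n,d_n\) built from \eqref{coefsauxiliares} does yield a triangular nonlinear system that determines the \(a_k\) recursively.

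There are, however, two genuine gaps. First, the substantive content of the theorem --- that the explicitly displayed \(a_n\), with their ratios of terminating \({}_3F_2\)'s at unit argument, actually solve that system --- is never verified; you defer it to ``a finite list of contiguous hypergeometric identities\ldots tractable via Whipple-type transformations,'' which is a statement of intent rather than a proof. Second, the mechanism you propose for both the verification and the positivity is incorrect as stated: you identify the \({}_3F_2\) factors with ``the hypergeometric values of \(Q^{(n)}(0)\),'' but evaluating the type II multiple Hahn polynomial \eqref{HahnTipoII} at \(x=0\) kills every term of the Kamp\'e de F\'eriet series with \(l+m\geq 1\) (because of the factor \((-x)_{l+m}=(0)_{l+m}\)), so \(Q_{(n_1,n_2)}(0)\) collapses to a pure Pochhammer product and is not a \({}_3F_2\). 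The zero-interlacing argument you then invoke therefore has nothing to act on, and the positivity of these terminating alternating sums (note the negative-integer lower parameters \(-2n\), \(-2n\pm1\)) in the semi-band \(-1<\alpha_1-\alpha_2<0\) --- which is precisely the hard part of the theorem --- remains unproved. Finally, your closing assertion that outside this range ``no PBF with positive entries can exist'' is a converse the theorem does not claim and which you do not justify.
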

\begin{rem}
 It is worth noting that the region of parameters \( - 1 < \alpha_1 - \alpha_2 < 1 \), with \( \alpha _1\) and \( \alpha _2\) both greater than \( - 1 \), guarantees a nonnegative matrix. This region forms a band that includes the semi-band \( -1 < \alpha_1 - \alpha_2 < 0 \), where \( \alpha _1\) and \( \alpha _2\) are both greater than \( - 1 \), ensuring a PBF. This situation will reappear for all the Askey descendant polynomials to be discussed later. See the diagram above.
\end{rem}
\begin{center}
 \begin{tikzpicture}[arrowmark/.style 2 args={decoration={markings,mark=at position #1 with \arrow{#2}}},scale=1]
 \begin{axis}[axis lines=middle,axis equal,grid=both,xmin=-1, xmax=3,ymin=-1.5, ymax=3.5,
 xticklabel,yticklabel,disabledatascaling,xlabel=\( \alpha _1 \), ylabel=\( \alpha _2 \), every axis x label/.style={
 at={(ticklabel* cs:1)},
 anchor=south west,
 },
 every axis y label/.style={
 at={(ticklabel* cs:1.0)},
 anchor=south west,
 },grid style={line width=.1pt, draw=Bittersweet!10},
 major grid style={line width=.2pt,draw=Bittersweet!50},
 minor tick num=4,
 enlargelimits={abs=0.09},
 axis line style={latex'-latex'},Bittersweet]
 
 \draw [fill=DarkSlateBlue!20,opacity=.2,dashed,thick] (-1,4)--(-1,-1)--(5,-1)--(5,4)--(-1,4) ;
 \draw [fill=DarkSlateBlue!30,opacity=.5,dashed,thick] (-1,-1)--(-1,0)--(3,4) node[above, Black,sloped, pos=0.5] {\( \alpha_1=\alpha_2+1\)}--(5,5)--(5,4) --(0,-1) node[below, Black,sloped, pos=0.6] {\( \alpha_1=\alpha_2-1\)}--(-1,-1);
 \draw [
 pattern=north west lines, pattern color=DarkSlateBlue!50,opacity=.6,dashed] (-1,-1)--(0,-1)--(5,4)--(5,5)--(-1,-1) ;
 \draw [fill=DarkSlateBlue!30,opacity=.5,dashed,thick] (-1,-1) -- (4,4 )node[below, Black,sloped, pos=0.4] {\( \alpha_1=\alpha_2\)};
 \draw[thick,black] (axis cs:-1,0) circle[radius=2pt,opacity=0.2,fill]node[left,above ] {\( -1\)} ;
 \draw[thick,black] (axis cs:0,-1)circle[radius=2pt,opacity=0.2,fill] node[right,below ] {\( -1\)} ;
 \node[anchor = north east,Bittersweet] at (axis cs: 4.1,2.7) {\( \mathcal R\)} ;
 \node[anchor = north east,Bittersweet] at (axis cs: 0.1,.55) {\( \mathcal S_+\)} ;
 \node[anchor = north east,Bittersweet] at (axis cs: 0.7,0.05) {\( \mathcal S_-\)} ;
 \end{axis}
 \draw (3.5,-0.55) node
 {\begin{minipage}{0.80\textwidth}
 \begin{center}\small
 \textbf{Allowed parameter region \(\mathcal R \), nonnegativeness band \(\mathcal S_+\cup\mathcal S_-\) and PBF semi-band \(\mathcal S_-\)}
 \end{center}
 \end{minipage}};
 \end{tikzpicture}
\end{center}

Finally, the type II polynomials are given by the expression provided in \cite[\S 4.5]{Arvesu}.
\begin{multline}
\label{HahnTipoII}
Q_{(n_1,n_2)}
=
 \dfrac{(\alpha_1+1)_{n_1}(\alpha_2+1)_{n_2}(-N)_{n_1+n_2}}{(\alpha_1+\beta+n_1+n_2+1)_{n_1}(\alpha_2+\beta+n_1+n_2+1)_{n_2}}\\\times
 \KF{2:3;1}{2:2;0}{(-x,\alpha_1+\beta+n_1+1):(-n_2,\alpha_1+n_1+1,\alpha_2+\beta+n_1+n_2+1);(-n_1)}{(-N,\alpha_1+1):(\alpha_2+1,\alpha_1+\beta+n_1+1);--}{1,1} ,
\end{multline}
and type I polynomials are given by the expression provided in \cite[\S 2]{HahnI}
\begin{multline}
\label{HahnTipoI}
 Q_{(n_1,n_2),i}=\dfrac{(-1)^{n_i-1}(N+1-n_1-n_2)!(n_1+n_2-2)!}{(n_1-1)!(n_2-1)!(\beta+1)_{n_1+n_2-1}(\alpha_i+\beta+n_1+n_2+n_i)_{N+1-n_1-n_2}} 
 \dfrac{(\hat{\alpha}_i+\beta+\hat{n}_i+1)_{n_1+n_2-1}}{(\alpha_i-\hat{\alpha}_i-\hat{n}_i+1)_{n_1+n_2-1}} \\\times
\KF{2:3;1}{2:2;0}{(-n_i+1,-N): (\alpha_i+\beta+n_1+n_2, \alpha_i-\hat{\alpha}_i-\hat{n}_i+1,-x); (\hat{\alpha}_i-\alpha_i-n_i+1)}{(-n_1-n_2+2,\hat{\alpha}_i+\beta+\hat{n}_i+1): (\alpha_i+1, -N); --}{1,1},
\end{multline}
for \( i \in\{1,2\} \), the type I polynomials can be obtained by substituting \( \hat {\alpha}_i\coloneq \delta_{i,2}\alpha_1+\delta_{i,1}\alpha_2\) and \( \hat {n}_i\coloneq \delta_{i,2}n_1+\delta_{i,1}n_2=n_1+n_2-n_i\) into the respective expressions. Both type II and I polynomials exist for \( n_1+n_2\leq N \). 

\begin{exa}
 For \( m=7\) and the chosen parameters \( \alpha _1=0.4 \), \( \alpha_2=0.6 \), \( \beta=0.75 \), and \( N=10 \).
 Note that in this case we can construct the polynomials \( B_{(n_1,n_2)} \) in the stepline from \((0,0) \) til \( (5,5 ) \), i.e. we can enumerate them from \( 0 \) til \( 10 \). Hence, \( m= 7\) is admissible and we obtain the following
 \( 7\times7\) stochastic matrices along with their corresponding pure birth/pure death factorization
\begin{align*}
 & P_{II,7}
 (0.4,0.6,0.75,10) 
 \approx
\begin{bNiceArray}[small]{ccccccc}
 0.46 & 0.55 & 0 & 0 & 0 & 0 & 0 \\
 0.14 & 0.43& 0.43& 0 & 0 & 0 & 0 \\
 0.02 & 0.19 & 0.48 & 0.31& 0 & 0 & 0 \\
 0 & 0.02& 0.24& 0.48 & 0.26 & 0 & 0 \\
 0 & 0 & 0.04& 0.27 & 0.50 & 0.19 & 0 \\
 0 & 0 & 0 & 0.04& 0.33 & 0.50 & 0.13\\
 0 & 0 & 0 & 0 & 0.06 & 0.42 & 0.52 \\
\end{bNiceArray}
\\
& \phantom{olaolaola}
\approx\begin{multlined}[t][.8\textwidth]
\begin{bNiceArray}[small]{ccccccc}
 1 & 0 & 0 & 0 & 0 & 0 & 0 \\
 0.06 & 0.94 & 0 & 0 & 0 & 0
 & 0 \\
 0 & 0.18& 0.82& 0 & 0 & 0 &
 0 \\
 0 & 0 & 0.13& 0.87 & 0 & 0 &
 0 \\
 0 & 0 & 0 & 0.18 & 0.82 & 0 &
 0 \\
 0 & 0 & 0 & 0 & 0.15 & 0.85 &
 0 \\
 0 & 0 & 0 & 0 & 0 & 0.22 &
 0.78 \\
\end{bNiceArray}
\begin{bNiceArray}[small]{ccccccc}
 1 & 0 & 0 & 0 & 0 & 0 & 0 \\
 0.28& 0.72& 0 & 0 & 0 & 0 &
 0 \\
 0 & 0.37& 0.63 & 0 & 0 & 0 &
 0 \\
 0 & 0 & 0.52& 0.48 & 0 & 0 &
 0 \\
 0 & 0 & 0 & 0.61& 0.39& 0 &
 0 \\
 0 & 0 & 0 & 0 & 0.73 & 0.27 &
 0 \\
 0 & 0 & 0 & 0 & 0 & 0.90&
 0.10 \\
\end{bNiceArray}
\begin{bNiceArray}[small]{ccccccc}
 0.45 & 0.55 & 0 & 0 & 0 & 0 &
 0 \\
 0 & 0.37& 0.63& 0 & 0 & 0 &
 0 \\
 0 & 0 & 0.40 & 0.60 & 0 & 0 &
 0 \\
 0 & 0 & 0 & 0.37 & 0.63 & 0 &
 0 \\
 0 & 0 & 0 & 0 & 0.40& 0.60&
 0 \\
 0 & 0 & 0 & 0 & 0 & 0.42 &
 0.58 \\
 0 & 0 & 0 & 0 & 0 & 0 & 1 \\
\end{bNiceArray},
\end{multlined}
\end{align*}
\begin{center}
 \tikzset{decorate sep/.style 2 args={decorate,decoration={shape backgrounds,shape=circle,shape size=#1,shape sep=#2}}}
 \begin{tikzpicture}[start chain = going right,
 -Triangle, every loop/.append style = {-Triangle}]
 \foreach \i in {1,...,7}
 \node[state, on chain] (\i) {\i};
 \foreach
 \i/\txt in {1/\( 0.55 \), 2/\( 0.43 \), 3/\( 0.31 \), 4/\( 0.26 \), 5/\( 0.19 \), 6/\( 0.13\)}
 \draw let \n1 = { int(\i+1) } in
 (\i) edge[bend left,"\txt",color=Periwinkle] (\n1);
 
 \foreach
 \i/\txt in {1/\( 0.14 \), 2/\( 0.19\)/,3/\( 0.24 \), 4/\( 0.27 \), 5/\( 0.33 \), 6/\( 0.42\)}
 \draw let \n1 = { int(\i+1) } in
 (\n1) edge[bend left,above, "\txt",color=Mahogany,auto=right] (\i);
 
 \foreach
 \i/\txt in {1/\( 0.02 \), 2/\( 0.02 \), 3/\( 0.04 \), 4/\( 0.04 \), 5/\( 0.06\)}
 \draw let \n1 = { int(\i+2) } in
 (\n1) edge[bend left=45,color=RawSienna,"\txt", auto =right] (\i);
 
 \foreach \i/\txt in {1/\( 0.46 \), 2/\( 0.43 \), 3/\( 0.48 \), 4/\( 0.48 \), 5/\( 0.50 \), 6/\( 0.50 \), 7/\( 0.52\)}
 \draw (\i) edge[loop above,color=NavyBlue, "\txt"] (\i);
 \end{tikzpicture}
 \begin{tikzpicture}
 \draw (4,-1.8) node
 {\begin{minipage}{0.8\textwidth}
 \begin{center}\small
 \textbf{Type II multiple Hahn\( (0.4,0.6,0.75,
 10 
 )\) Markov chain diagram}
 \end{center}
 \end{minipage}};
 \end{tikzpicture}
 \end{center}
as well as
\begin{align*}
 & P_{I,7}
 (0.4,0.6,0.75,10) 
 \approx
\begin{bNiceArray}[small]{ccccccc}
 0.45 & 0.43 & 0.12 & 0 & 0 & 0 & 0 \\
 0.18& 0.43& 0.35 & 0.04 & 0 & 0 & 0 \\
 0 & 0.23& 0.48 & 0.26 & 0.03 & 0 & 0 \\
 0 & 0 & 0.29& 0.48 & 0.22& 0.01 & 0 \\
 0 & 0 & 0 & 0.32 & 0.50 & 0.17 & 0.01 \\
 0 & 0 & 0 & 0 & 0.38& 0.50 & 0.12 \\
 0 & 0 & 0 & 0 & 0 & 0.48 & 0.52\\
\end{bNiceArray}
\\
&\phantom{olaolaola} \approx
\begin{multlined}[t][.8\textwidth]
 \begin{bNiceArray}[small]{ccccccc}
 1 & 0 & 0 & 0 & 0 & 0 & 0 \\
 0.41 & 0.59& 0 & 0 & 0 & 0 &
 0 \\
 0 & 0.54& 0.46 & 0 & 0 & 0 &
 0 \\
 0 & 0 & 0.66& 0.34& 0 & 0 &
 0 \\
 0 & 0 & 0 & 0.72& 0.28 & 0 &
 0 \\
 0 & 0 & 0 & 0 & 0.81 & 0.19 &
 0 \\
 0 & 0 & 0 & 0 & 0 & 0.92 &
 0.08 \\
\end{bNiceArray}
\begin{bNiceArray}[small]{ccccccc}
 0.53& 0.47 & 0 & 0 & 0 & 0 & 0
 \\
 0 & 0.59 & 0.41& 0 & 0 & 0 &
 0 \\
 0 & 0 & 0.52& 0.48 & 0 & 0 &
 0 \\
 0 & 0 & 0 & 0.52& 0.48 & 0 &
 0 \\
 0 & 0 & 0 & 0 & 0.51& 0.49&
 0 \\
 0 & 0 & 0 & 0 & 0 & 0.56 &
 0.44 \\
 0 & 0 & 0 & 0 & 0 & 0 & 1 \\
\end{bNiceArray}
\begin{bNiceArray}[small]{ccccccc}
 0.85 & 0.15& 0 & 0 & 0 & 0 &
 0 \\
 0 & 0.74& 0.26 & 0 & 0 & 0 &
 0 \\
 0 & 0 & 0.85 & 0.15 & 0 & 0 &
 0 \\
 0 & 0 & 0 & 0.86 & 0.14& 0 &
 0 \\
 0 & 0 & 0 & 0 & 0.91 & 0.09
 & 0 \\
 0 & 0 & 0 & 0 & 0 & 0.93 &
 0.07 \\
 0 & 0 & 0 & 0 & 0 & 0 & 1 \\
\end{bNiceArray}
\end{multlined}
\end{align*}
\begin{center}
 \tikzset{decorate sep/.style 2 args={decorate,decoration={shape backgrounds,shape=circle,shape size=#1,shape sep=#2}}}
\begin{tikzpicture}[start chain = going right,
 -Triangle, every loop/.append style = {-Triangle}]
 \foreach \i in {1,...,7}
 \node[state, on chain] (\i) {\i};
 \foreach
 \i/\txt in {1/\( 0.43 \), 2/\( 0.35 \), 3/\( 0.26 \), 4/\( 0.22 \), 5/\( 0.17 \), 6/\( 0.12\)}
 \draw let \n1 = { int(\i+1) } in
 (\i) edge[bend left,"\txt",color=Periwinkle,auto=right] (\n1);
 
 \foreach
 \i/\txt in {1/\( 0.18 \), 2/\( 0.23\)/,3/\( 0.29 \), 4/\( 0.32 \), 5/\( 0.38 \), 6/\( 0.48\)}
 \draw let \n1 = { int(\i+1) } in
 (\n1) edge[bend left,below, "\txt",color=Mahogany,] (\i);
 
 \foreach
 \i/\txt in {1/\( 0.12 \), 2/\( 0.04 \), 3/\( 0.03 \), 4/\( 0.01 \), 5/\( 0.01\)}
 \draw let \n1 = { int(\i+2) } in
 (\i) edge[bend left=45,below,color=MidnightBlue,"\txt", auto = right] (\n1);
 
 \foreach \i/\txt in {1/\( 0.45 \), 2/\( 0.43 \), 3/\( 0.48 \), 4/\( 0.48 \), 5/\( 0.50 \), 6/\( 0.50 \), 7/\( 0.52\)}
 \draw (\i) edge[loop below, 
 "\txt"] (\i);
 
\end{tikzpicture}
 \begin{tikzpicture}
 \draw (4,-1.8) node
 {\begin{minipage}{0.8\textwidth}
 \begin{center}\small
 \textbf{Type I multiple Hahn\( (0.4,0.6,0.75,
 10 
 )\) Markov chain diagram}
 \end{center}
 \end{minipage}};
 \end{tikzpicture}
 \end{center}
The corresponding steady state vector for both Markov chains, calculated using the formula \eqref{estacionariomultiple}, is given~by:
\begin{align*}
\pi_7
(0.4,0.6,0.75,10) 
\approx
\begin{bNiceMatrix}
0.04 &
0.13&
0.24&
0.25&
0.21&
0.10&
0.03
\end{bNiceMatrix},
\end{align*}
and the expected return times are
\begin{align*}
 (\bar t_7)_1&\approx 23.10, & (\bar t_7)_2&\approx 7.74, & (\bar t_7)_3&\approx 4.19, &
 (\bar t_7)_4&\approx 3.95,& (\bar t_7)_5&\approx 4.92, & (\bar t_7)_6&\approx 9.67, & (\bar t_7)_7&\approx 34.73.
\end{align*}
\end{exa}

\subsection{Jacobi--Piñeiro finite Markov chains}
The Jacobi--Piñeiro type II 
 polynomials, denoted as 
\( P_{(n_1,n_2)}=P_{(n_1,n_2)}(x;\alpha_1,\alpha_2,\beta)\)
 and of type I 
\( P_{(n_1,n_2),i}=P_{(n_1,n_2),i}(x;\alpha_1,\alpha_2,\beta)\)
 for \( i \in\{1,2\} \), respectively, satisfy their respective continuous orthogonality relations of the form:
\begin{align*}
 \int_{0}^{1}x^jP_{(n_1,n_2)}(x)w_i(x)\d\mu(x)&=0, & j&\in\{0,\ldots,n_i-1\} , & i&\in\{1,2\} ,
\end{align*}
 and 
\begin{align*}
 \int_{0}^{1}x^j\left(P_{(n_1,n_2),1}(x)w_1(x) +P_{(n_1,n_2),2}(x)w_2(x)\right)\d\mu(x)=0, 
 & j&\in \{0,\ldots,n_1+n_2-2\} .
\end{align*}
These relations hold with respect to the weight functions and measure defined as:
 \( w_i(x;\alpha_i)= x^{\alpha_i} \), \( i\in\{1,2\} \), \( \d\mu(x)=(1-x)^{\beta}\mathrm{d}x \). 
The weight functions and measure are defined over the interval \( [0,1] \). The parameters \( \alpha _1,\alpha_2,\beta\) are required to be greater than \( - 1 \), and in order to have an AT system, \( \alpha_1-\alpha_2\not\in\mathbb Z \).

The coefficients of the recurrence relations, \( b_{m}(\alpha_1,\alpha_2,\beta), c_{m}(\alpha_1,\alpha_2,\beta)\) and \( d_{m}(\alpha_1,\alpha_2,\beta) \), as shown in \eqref{recurrenciaII} and \eqref{recurrenciaI}, were first derived in \cite[\S 3.1]{Clasicos}. In \cite[\S 3.3]{ContinuosII}, they are expressed as follows:
\begin{align*}
 \begin{aligned}
 b_{2m}&=\begin{multlined}[t][.8\textwidth]\frac{A(m,m,\alpha_1,\alpha_2,\beta,N)}{N+\alpha_1+\beta+m+1}+\frac{A(m,m,\alpha_2,\alpha_1+1,\beta,N)}{N+\alpha_2+\beta+m+1}+\frac{C(m+1,m+1,\alpha_1,\alpha_2,\beta,N)}{N-2m},\end{multlined}\\
 b_{2m+1}&=\begin{multlined}[t][.8\textwidth]
 \frac{A(m,m+1,\alpha_2,\alpha_1,\beta,N)}{N+\alpha_2+\beta+m+1}+\frac{A(m+1,m,\alpha_1,\alpha_2+1,\beta,N)}{N+\alpha_1+\beta+m+2}
 +\frac{C(m+1,m+2,\alpha_2,\alpha_1,\beta,N)}{N-2m-1},
 \end{multlined}\\
 c_{2m}&=\begin{multlined}[t][.8\textwidth]
 \left(\frac{A(m,m,\alpha_1,\alpha_2,\beta,N)}{N+\alpha_1+\beta+m+1}+\frac{A(m,m,\alpha_2,\alpha_1+1,\beta,N)}{N+\alpha_2+\beta+m+1}\right)
 \frac{C(m,m+1,\alpha_2,\alpha_1,\beta,N)}{N-2m+1}
 \\
 +\frac{A(m,m,\alpha_1,\alpha_2,\beta,N)B(m,m,\alpha_1,\alpha_2,\beta,N)}{(N+\alpha_1+\beta+m+1)(N-2m+1)}, \end{multlined}\\
 c_{2m+1}&=\begin{multlined}[t][.8\textwidth]
 \left(\frac{A(m,m+1,\alpha_2,\alpha_1,\beta,N)}{N+\alpha_2+\beta+m+1}+\frac{A(m+1,m,\alpha_1,\alpha_2+1,\beta,N)}{N+\alpha_1+\beta+m+2}\right)
 \frac{C(m+1,m+1,\alpha_1,\alpha_2,\beta,N)}{N-2m}\\
 +\frac{A(m,m+1,\alpha_2,\alpha_1,\beta,N)B(m,m+1,\alpha_2,\alpha_1,\beta,N)}{(N+\alpha_2+\beta+m+1)(N-2m)},
 \end{multlined}\\
 d_{2m}&=\frac{A(m,m,\alpha_1,\alpha_2,\beta,N)B(m,m,\alpha_1,\alpha_2,\beta,N)C(m,m,\alpha_1,\alpha_2,\beta,N)}{(N+\alpha_1+\beta+m+1)(N-2m+1)(N-2m+2)},\\
 d_{2m+1}&=\frac{A(m,m+1,\alpha_2,\alpha_1,\beta,N)B(m,m+1,\alpha_2,\alpha_1,\beta,N)C(m,m+1,\alpha_2,\alpha_1,\beta,N)}{(N+\alpha_2+\beta+m+1)(N-2m)(N-2m+1)}.
 \end{aligned}
\end{align*}
Being \( A,B,C,D\) the functions defined in \eqref{coefsauxiliares}. These coefficients are all positive if \( - 1<\alpha_1-\alpha_2<1 \). 

In \cite{Bidiagonal_factorization}, we proved the following result:
\begin{align*}
 a_{6n+1}&=\dfrac{(\alpha_1+1+n)(\alpha_1+\beta+2n+1)(\alpha_2+\beta+2n+1)}{(\alpha_1+\beta+3n+1)_2(\alpha_2
 +\beta+3n+1)},\\
 a_{6n+4}&=\dfrac{(\alpha_2+1+n)(\alpha_1+\beta+2n+2)(\alpha_2+\beta+2n+2)}{(\alpha_1+\beta+3n+3)(\alpha_2+\beta+3n+2)_2},\\
 a_{6n+2}&=\dfrac{(\beta+2n+1)(\alpha_2-\alpha_1+n)(\alpha_2+\beta+2n+1)}{(\alpha_1+\beta+3n+2)(\alpha_2+\beta+3n+1)_2}, \\
 a_{6n+5}&=\dfrac{(n+1)(\beta+2n+2)(\alpha_2+\beta+2n+2)}{(\alpha_1+\beta+3n+3)_2(\alpha_2+\beta+3n+3)},\\
 a_{6n+3}&=\dfrac{(\beta+2n+1)(\alpha_1-\alpha_2+n+1)(\alpha_1+\beta+2n+2)}{(\alpha_1+\beta+3n+2)_2(\alpha_2+\beta+3n+2)},\\
 a_{6n+6}&=\dfrac{(n+1)(\beta+2n+2)(\alpha_1+\beta+2n+3)}{(\alpha_1+\beta+3n+4)(\alpha_2+\beta+3n+3)_2}.
\end{align*}
All of these coefficients are positive when \( - 1<\alpha_1-\alpha_2<0 \). This factorization has been proven to hold in \cite{genetico}, and it is also mentioned in \cite{Darboux}. For a recent discussion on the PBF of tetradiagonal matrices, see \cite{PBF_tetra}.

The type II polynomials were derived in \cite[\S 3.3]{ContinuosII}, and they can be expressed using hypergeometric functions
\begin{multline*}
 P_{(n_1,n_2)}
 =(-1)^{n_1+n_2}\dfrac{(\alpha_1+1)_{n_1}(\alpha_2+1)_{n_2}}{(n_1+n_2+\alpha_1+\beta+1)_{n_1}(n_1+n_2+\alpha_2+\beta+1)_{n_2}}\\
 \times\KF{1:3;1}{1:2;0}{(\alpha_1+\beta+n_1+1):(-n_2,\alpha_2+\beta+n_1+n_2+1,\alpha_1+n_1+1);(-n_1)}{(\alpha_1+1):(\alpha_2+1,\alpha_1+\beta+n_1+1);--}{x,x}.
\end{multline*}
The type I polynomials are \cite[\S 4.2]{JP}
\begin{multline*}
 P_{(n_1,n_2),i}
 =(-1)^{n_1+n_2-1}\dfrac{(\alpha_1+\beta+n_1+n_2)_{n_1}({\alpha}_2+\beta+n_1+n_2)_{{n}_2}}{(n_i-1)!(\hat{\alpha}_i-\alpha_i)_{\hat{n}_i}}
 \\\times\dfrac{\Gamma(\alpha_i+\beta+n_1+n_2)}{\Gamma(\beta+n_1+n_2)\Gamma(\alpha_i+1)}\pFq{3}{2}{-n_i+1,\alpha_i+\beta+n_1+n_2,\alpha_i-\hat{\alpha}_i-\hat{n}_i+1}{\alpha_i+1,\alpha_i-\hat{\alpha}_i+1}{x},
\end{multline*}
for \( i =1,2\) with \( \hat {\alpha}_i\coloneq \alpha_1\delta_{i,2}+\alpha_2\delta_{i,1}\) and \( \hat {n}_i\coloneq n_1\delta_{i,2}+n_2\delta_{i,1}=n_1+n_2-n_i \). 

\begin{exa}
 for \( m=7\) and the chosen parameters \( \alpha _1=0.4 \), \( \alpha_2=0.6 \), \( \beta=0.75 \), we obtain the following \( 7\times7\) stochastic matrices along with their corresponding pure birth/pure death factorizations:
\begin{align*}
 & P_{II,7} 
 (0.4,0.6,0.75) 
 \approx
\begin{bNiceArray}[small]{ccccccc}
 0.47& 0.53 & 0 & 0 & 0 & 0 & 0 \\
 0.13& 0.44 & 0.43 & 0 & 0 & 0 & 0
 \\
 0.02& 0.17& 0.48 & 0.33& 0 &
 0 & 0 \\
 0 & 0.02 & 0.22 & 0.46 & 0.30&
 0 & 0 \\
 0 & 0 & 0.04& 0.24& 0.48 &
 0.24& 0 \\
 0 & 0 & 0 & 0.05& 0.31 & 0.46 &
 0.18 \\
 0 & 0 & 0 & 0 & 0.10& 0.42 & 0.48
 \\
\end{bNiceArray}\\
& \phantom{olaolaola}
\approx
 \begin{bNiceArray}[small]{ccccccc}
 1 & 0 & 0 & 0 & 0 & 0 & 0 \\
 0.07& 0.93 & 0 & 0 & 0 & 0
 & 0 \\
 0 & 0.23 & 0.77& 0 & 0 & 0 &
 0 \\
 0 & 0 & 0.23& 0.77& 0 & 0 &
 0 \\
 0 & 0 & 0 & 0.32& 0.68& 0 &
 0 \\
 0 & 0 & 0 & 0 & 0.34& 0.66 &
 0 \\
 0 & 0 & 0 & 0 & 0 & 0.50 &
 0.50\\
\end{bNiceArray}
\begin{bNiceArray}[small]{ccccccc}
 1 & 0 & 0 & 0 & 0 & 0 & 0 \\
 0.22 & 0.78& 0 & 0 & 0 & 0 &
 0 \\
 0 & 0.22& 0.78& 0 & 0 & 0 &
 0 \\
 0 & 0 & 0.31 & 0.69 & 0 & 0 &
 0 \\
 0 & 0 & 0 & 0.32 & 0.68& 0 &
 0 \\
 0 & 0 & 0 & 0 & 0.42& 0.58&
 0 \\
 0 & 0 & 0 & 0 & 0 & 0.58 &
 0.42\\
\end{bNiceArray}
\begin{bNiceArray}[small]{ccccccc}
 0.47& 0.53 & 0 & 0 & 0 & 0 &
 0 \\
 0 & 0.41 & 0.59 & 0 & 0 & 0 &
 0 \\
 0 & 0 & 0.45 & 0.55& 0 & 0 &
 0 \\
 0 & 0 & 0 & 0.43& 0.57& 0 &
 0 \\
 0 & 0 & 0 & 0 & 0.49& 0.51 &
 0 \\
 0 & 0 & 0 & 0 & 0 & 0.54 &
 0.46 \\
 0 & 0 & 0 & 0 & 0 & 0 & 1 \\
\end{bNiceArray}.
\end{align*}
\begin{center}
 \tikzset{decorate sep/.style 2 args={decorate,decoration={shape backgrounds,shape=circle,shape size=#1,shape sep=#2}}}
 \begin{tikzpicture}[start chain = going right,
 -Triangle, every loop/.append style = {-Triangle}]
 \foreach \i in {1,...,7}
 \node[state, on chain] (\i) {\i};
 \foreach
 \i/\txt in {1/\( 0.53 \), 2/\( 0.43 \), 3/\( 0.33 \), 4/\( 0.30 \), 5/\( 0.24 \), 6/\( 0.18\)}
 \draw let \n1 = { int(\i+1) } in
 (\i) edge[bend left,"\txt",color=Periwinkle] (\n1);
 
 \foreach
 \i/\txt in {1/\( 0.13 \), 2/\( 0.17\)/,3/\( 0.22 \), 4/\( 0.24 \), 5/\( 0.31 \), 6/\( 0.42\)}
 \draw let \n1 = { int(\i+1) } in
 (\n1) edge[bend left,above, "\txt",color=Mahogany,auto=right] (\i);
 
 \foreach
 \i/\txt in {1/\( 0.02 \), 2/\( 0.02 \), 3/\( 0.04 \), 4/\( 0.05 \), 5/\( 0.10\)}
 \draw let \n1 = { int(\i+2) } in
 (\n1) edge[bend left=45,color=RawSienna,"\txt", auto =right] (\i);
 
 \foreach \i/\txt in {1/\( 0.47 \), 2/\( 0.44 \), 3/\( 0.48 \), 4/\( 0.46 \), 5/\( 0.48 \), 6/\( 0.46 \), 7/\( 0.48\)}
 \draw (\i) edge[loop above,color=NavyBlue, "\txt"] (\i);
 \end{tikzpicture}
 \begin{tikzpicture}
 \draw (4,-1.8) node
 {\begin{minipage}{0.8\textwidth}
 \begin{center}\small
 \textbf{Type II Jacobi--Piñeiro \( (0.4,0.6,0.75)\) Markov chain diagram}
 \end{center}
 \end{minipage}};
 \end{tikzpicture}
\end{center}

\begin{align*}
 & P_{I,7}
 (0.4,0.6,0.75) 
 \approx
\begin{bNiceArray}[small]{ccccccc}
 0.47& 0.39 & 0.14& 0
 & 0 & 0 & 0 \\
 0.17 & 0.44 & 0.34& 0.05
 & 0 & 0 & 0 \\
 0 & 0.21& 0.48 & 0.26
 & 0.05 & 0 & 0 \\
 0 & 0 & 0.28& 0.46
 & 0.23& 0.03 & 0 \\
 0 & 0 & 0 & 0.32
 & 0.47 & 0.19 & 0.02\\
 0 & 0 & 0 & 0
 & 0.40 & 0.46 & 0.14\\
 0 & 0 & 0 & 0
 & 0 & 0.52& 0.48 \\
\end{bNiceArray}
\\
& \phantom{olaolaola} \approx
\begin{bNiceArray}[small]{ccccccc}
 1 & 0 & 0 & 0
 & 0 & 0 & 0 \\
 0.37 & 0.63 & 0 & 0
 & 0 & 0 & 0 \\
 0 & 0.46& 0.54& 0
 & 0 & 0 & 0 \\
 0 & 0 & 0.57 & 0.43
 & 0 & 0 & 0 \\
 0 & 0 & 0 & 0.60
 & 0.40& 0 & 0 \\
 0 & 0 & 0 & 0
 & 0.69 & 0.31 & 0 \\
 0 & 0 & 0 & 0
 & 0 & 0.79& 0.21 \\
\end{bNiceArray}
\begin{bNiceArray}[small]{ccccccc}
 0.57& 0.43 & 0 & 0
 & 0 & 0 & 0 \\
 0 & 0.70 & 0.30 & 0
 & 0 & 0 & 0 \\
 0 & 0 & 0.67 & 0.33
 & 0 & 0 & 0 \\
 0 & 0 & 0 & 0.73
 & 0.27 & 0 & 0 \\
 0 & 0 & 0 & 0
 & 0.74 & 0.26 & 0 \\
 0 & 0 & 0 & 0
 & 0 & 0.83 & 0.17\\
 0 & 0& 0 & 0
 & 0 & 0 & 1 \\
\end{bNiceArray} 
\begin{bNiceArray}[small]{ccccccc}
 0.83& 0.17 & 0 & 0
 & 0 & 0 & 0 \\
 0 & 0.67 & 0.33& 0
 & 0 & 0 & 0 \\
 0 & 0 & 0.74 & 0.26
 & 0 & 0 & 0 \\
 0 & 0 & 0 & 0.72
 & 0.28& 0 & 0 \\
 0 & 0 & 0 & 0
 & 0.77 & 0.23& 0 \\
 0 & 0 & 0 & 0
 & 0 & 0.80& 0.20 \\
 0 & 0 & 0 & 0
 & 0 & 0 & 1 \\
\end{bNiceArray}.
\end{align*}
\begin{center}
 \tikzset{decorate sep/.style 2 args={decorate,decoration={shape backgrounds,shape=circle,shape size=#1,shape sep=#2}}}
 \begin{tikzpicture}[start chain = going right,
 -Triangle, every loop/.append style = {-Triangle}]
 \foreach \i in {1,...,7}
 \node[state, on chain] (\i) {\i};
 \foreach
 \i/\txt in {1/\( 0.39 \), 2/\( 0.34 \), 3/\( 0.26 \), 4/\( 0.23 \), 5/\( 0.19 \), 6/\( 0.14\)}
 \draw let \n1 = { int(\i+1) } in
 (\i) edge[bend left,"\txt",color=Periwinkle,auto=right] (\n1);
 
 \foreach
 \i/\txt in {1/\( 0.17 \), 2/\( 0.21\)/,3/\( 0.28 \), 4/\( 0.32 \), 5/\( 0.40 \), 6/\( 0.52\)}
 \draw let \n1 = { int(\i+1) } in
 (\n1) edge[bend left,below, "\txt",color=Mahogany,] (\i);
 
 \foreach
 \i/\txt in {1/\( 0.14 \), 2/\( 0.05 \), 3/\( 0.05 \), 4/\( 0.03 \), 5/\( 0.02\)}
 \draw let \n1 = { int(\i+2) } in
 (\i) edge[bend left=45,below,color=MidnightBlue,"\txt", auto = right] (\n1);
 
 \foreach \i/\txt in {1/\( 0.47 \), 2/\( 0.44 \), 3/\( 0.48 \), 4/\( 0.46 \), 5/\( 0.47 \), 6/\( 0.46 \), 7/\( 0.48\)}
 \draw (\i) edge[loop below, 
 "\txt"] (\i);
 
 \end{tikzpicture}
 \begin{tikzpicture}
 \draw (4,-1.8) node
 {\begin{minipage}{0.8\textwidth}
 \begin{center}\small
 \textbf{Type I Jacobi--Piñeiro\( (0.4,0.6,0.75)\) Markov chain diagram}
 \end{center}
 \end{minipage}};
 \end{tikzpicture}
\end{center}
 For both Markov chains the corresponding steady state \eqref{estacionariomultiple} is
\begin{align*}
\pi_7(0.4,0.6,0.75)\approx \begin{bNiceMatrix}
 0.03&
 0.10&
 0.21&
 0.24&
 0.23&
 0.14&
 0.05
\end{bNiceMatrix},
\end{align*}
and the expected return times are
\begin{align*}
 (\bar t_7)_1&\approx 30.10, & (\bar t_7)_2&\approx 9.89, & (\bar t_7)_3&\approx 4.85,&
 (\bar t_7)_4&\approx 4.11,& (\bar t_7)_5&\approx 4.31, & (\bar t_7)_6&\approx 7.24, & (\bar t_7)_7&\approx 21.66.
\end{align*}
\end{exa}

\subsection{Multiple Meixner of the second kind finite Markov chains}
The multiple Meixner of the second kind type II and I polynomials, denoted respectively as \( M_{(n_1,n_2)}=M_{(n_1,n_2)}(x;\beta_1,\beta_2,c)\) and \( M_{(n_1,n_2),i}=M_{(n_1,n_2),i}(x;\beta_1,\beta_2,c) \), where \( i \in\{1,2\} \), respectively, satisfy respective discrete orthogonality relations of the~form:
\begin{align*}
 \sum_{k=0}^{\infty}(-k)_jM_{(n_1,n_2)}(k)w_i(k)&=0, 
 & j&\in\{0,\ldots,n_i-1\} , & i&\in\{1,2\} ,
\end{align*}
 and
\begin{align*}
 \sum_{k=0}^{\infty}(-k)_j\left(M_{(n_1,n_2),1}(k)w_1(k)+M_{(n_1,n_2),2}(k)w_2(k)\right)&=0,
 & j&\in\{0,\ldots,n_1+n_2-2\} ,
\end{align*}
with respect to the weight functions
\( w_i(x;\beta_i,c)=\dfrac{\Gamma(\beta_i+x)c^x}{\Gamma(\beta_i)\Gamma(x+1)} \), 
for \( i =\{1,2\} \). These ones are defined over the set \( \N_0\) with \( \beta _1,\beta_2>0 \), \( 0<c<1\) and, in order to have an AT system, \( \beta_1-\beta_2\not\in\mathbb Z \). 

The recurrence relation coefficients \eqref{recurrenciaII},\eqref{recurrenciaI} are \cite[\S 4.3]{Arvesu}
\begin{align*}
 b_{2m}(\beta_1,\beta_2,c)&=2m+\dfrac{c (\beta_1+3m) }{1-c},&
 b_{2m+1}(\beta_1,\beta_2,c)&=2m+1+\dfrac{c (\beta_2+3m+1)}{1-c},\\
 c_{2m}(\beta_1,\beta_2,c)&=\dfrac{c m (\beta_1+\beta_2+3m-2)}{(1-c)^2} ,&
 c_{2m+1}(\beta_1,\beta_2,c)&=\dfrac{c ((m+1)\beta_1+m(\beta_2+3m+1)) }{(1-c)^2} ,\\
 d_{2m}(\beta_1,\beta_2,c)&=\dfrac{c^2 m(m+\beta_1-1)(m+\beta_1-\beta_2) }{(1-c)^3},&
 d_{2m+1}(\beta_1,\beta_2,c)&=\dfrac{c^2 m(m+\beta_2-1)(m+\beta_2-\beta_1) }{(1-c)^3},
\end{align*}
which are all positive if \( - 1<\beta_1-\beta_2<1 \). 

In \cite{Bidiagonal_factorization}, we proved the following result:
\begin{pro}
For the PBF coefficients \( a_n(\beta_1,\beta_2,c) \), we can represent them in terms of the Gauss hypergeometric function as follows:
\begin{align*}
 a_{6n+1}&=\dfrac{(\beta_1+n)c}{1-c},&
 a_{6n+4}&=\dfrac{(\beta_2+n)c}{1-c},\\
 a_{6n+2}&=\dfrac{(n)_n(\beta_2-\beta_1+n)c \pFq{2}{1}{-n,\beta_2-\beta_1-n}{-2n+1}{\frac{c}{c-1}}}
 {(n+1)_n(1-c) \, \pFq{2}{1}{-n,\beta_2-\beta_1-n}{-2n}{\frac{c}{c-1}}}
 ,&
 a_{6n+5}&=\dfrac{(n+1)(\beta_1-\beta_2+n+1)c \pFq{2}{1}{-n,\beta_2-\beta_1-n}{-2n}{\frac{c}{c-1}} }{(2n+1)(1-c)^2 \pFq{2}{1}{-n-1,\beta_2-\beta_1-n-1}{-2n-1}{\frac{c}{c-1}}}
 ,\\
 a_{6n+3}&=
 \dfrac{(2n+1) \, \pFq{2}{1}{-n-1,\beta_2-\beta_1-n-1}{-2n-1}{\frac{c}{c-1}}}{\pFq{2}{1}{-n,\beta_2-\beta_1-n}{-2n}{\frac{c}{c-1}}},&
 a_{6n+6}&=\dfrac{2(n+1) \, \pFq{2}{1}{-n-1,\beta_2-\beta_1-n-1}{-2n-2}{\frac{c}{c-1}} }{(1-c) \, \pFq{2}{1}{-n-1,\beta_2-\beta_1-n-1}{-2n-1}{\frac{c}{c-1}}}
 ,
\end{align*}
which are all positive whenever \( - 1<\beta_1-\beta_2<0 \). 
\end{pro}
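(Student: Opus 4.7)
My plan is to obtain the multiple Meixner II PBF coefficients by a limit transition from the multiple Hahn PBF stated in Theorem~\ref{Pro:PBF_Hahn}, rather than attempting a direct entry-by-entry verification of $T_m = L_{1,m} L_{2,m} U_m$. In the Askey scheme the multiple Meixner polynomials of the second kind are recovered from the multiple Hahn polynomials via a confluent limit in which $N \to \infty$ and the Hahn parameter $\beta$ is rescaled so that the Hahn weight $\Gamma(\beta + N - x + 1)/\Gamma(N - x + 1)$ converges, up to normalization, to $c^{x}$ (explicitly one sets $\alpha_i = \beta_i - 1$ and $\beta = \beta(N,c)$ chosen so that $c/(1-c)$ emerges as $\beta/(N+\beta)$ in the limit). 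Under this substitution the Hahn recurrence coefficients $b_n, c_n, d_n$ from~\eqref{coefsauxiliares} reduce to the Meixner II recurrence coefficients listed just above, so by continuity the PBF identity~\eqref{factorizacionbidiagonalmultiple} is preserved in the limit.

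The proof would then proceed in three steps. First, I would identify the precise confluence map from $(\alpha_1, \alpha_2, \beta, N)$ to $(\beta_1, \beta_2, c)$ and verify at the level of the explicit rational prefactors in Theorem~\ref{Pro:PBF_Hahn} that the six families $a_{6n+k}(\alpha_1, \alpha_2, \beta, N)$ converge to the rational prefactors claimed for $a_{6n+k}(\beta_1, \beta_2, c)$. Second, I would track the fate of the three $_3F_2$'s appearing in the Hahn coefficients $a_{6n+2}, a_{6n+3}, a_{6n+5}, a_{6n+6}$: the Pochhammer symbol involving $(-N)_\ell$ in the numerator, combined with the rescaling of $\beta$, collapses one series factor by the standard confluence
\begin{align*}
 \lim_{N \to \infty} \pFq{3}{2}{-n, -N, \beta_2-\beta_1-n}{-2n+1, \beta_2 + \beta + n + 1}{1}
 = \pFq{2}{1}{-n, \beta_2-\beta_1-n}{-2n+1}{\tfrac{c}{c-1}},
\end{align*}
and analogously for the other three $_3F_2$'s, reproducing exactly the $_2F_1$'s in the stated formulas.

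Third, positivity under $-1 < \beta_1 - \beta_2 < 0$ is checked directly on each line: for $\beta_1, \beta_2 > 0$ and $0 < c < 1$ all rational prefactors in $a_{6n+1}, a_{6n+3}, a_{6n+4}, a_{6n+6}$ are manifestly positive, while $a_{6n+2}$ and $a_{6n+5}$ carry an extra factor $(\beta_2 - \beta_1 + n)$ or $(\beta_1 - \beta_2 + n + 1)$ that is strictly positive once $|\beta_1 - \beta_2| < 1$. What remains is to show that the four truncated $_2F_1$'s at argument $c/(c-1) < 0$ are strictly positive; each is a finite sum $\sum_{\ell=0}^{n} (-n)_\ell(\beta_2 - \beta_1 - n)_\ell / ((-2n+\epsilon)_\ell \ell!)\,(c/(c-1))^\ell$, and in the range $-1 < \beta_1 - \beta_2 < 0$ a term-by-term sign analysis shows that all summands have the same sign, yielding positivity of the ratio.

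The main obstacle is the bookkeeping of signs and Pochhammer identities in step three: one must check that the factors $(-2n+1)_\ell$ and $(\beta_2-\beta_1-n)_\ell$ combine with the negative argument $c/(c-1)$ to give uniformly signed terms, and that the denominators never vanish in the relevant range of $n$ and $\ell$. Secondarily, one should confirm that the confluence limit in step two is uniform enough to be taken inside the $_3F_2$ (which is routine since the series is a finite sum for $n_1 + n_2 \le N$). A direct verification — inserting the proposed $a_n$ into $L_{1,m}L_{2,m}U_m$ and matching the four nonzero diagonals of $T_m$ — is available as a fallback but reduces, after expansion, to the same family of contiguous $_2F_1$ identities.
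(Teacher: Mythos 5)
The paper does not actually prove this proposition in the text: it is imported from \cite{Bidiagonal_factorization} with the single sentence ``we proved the following result,'' so there is no in-text argument to compare yours against. That said, your confluence strategy is the natural one given Theorem \ref{Pro:PBF_Hahn}, and it is essentially how the Hahn descendants are obtained in that reference; the three steps you outline do go through. Two points need tightening. First, the parameter map is \( \alpha_i=\beta_i-1 \) together with the Hahn parameter \( \beta=N(1-c)/c \), so that \( N/\beta\to c/(1-c) \) and \( (N+\beta)/\beta\to 1/(1-c) \); your phrasing that ``\( c/(1-c) \) emerges as \( \beta/(N+\beta) \)'' has the ratio upside down (that quantity tends to \( 1-c \)). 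With the correct map one gets \( (-N)_\ell/(\alpha_2+\beta+n+1)_\ell\to\bigl(c/(c-1)\bigr)^\ell \) termwise, and since every \( {}_3F_2 \) in Theorem \ref{Pro:PBF_Hahn} is a finite sum the limit passes inside, giving exactly the stated \( {}_2F_1 \)'s; the rational prefactors of \( a_{6n+k} \) likewise converge to the Meixner ones. Second, ``by continuity the PBF identity is preserved'' hides the step where the real bookkeeping lives: you must also check that the Hahn recurrence coefficients \( b_n,c_n,d_n \) converge under the same substitution to the Meixner II coefficients listed in the paper, so that \emph{both} sides of \( T_m=L_{1,m}L_{2,m}U_m \) converge to the intended objects; this is a computation of the same kind as the one you carry out for the \( a_n \) and should be stated as such rather than assumed. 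Your positivity analysis is correct: for \( 0<\beta_2-\beta_1<1 \) and \( \ell \) in the truncation range, every factor of \( (-n)_\ell \), \( (\beta_2-\beta_1-n)_\ell \) and of the lower Pochhammer \( (-2n)_\ell \), \( (-2n\pm1)_\ell \), \( (-2n-2)_\ell \) is strictly negative, so each term of each \( {}_2F_1 \) at the negative argument \( c/(c-1) \) is positive and no denominator vanishes; combined with the prefactors, which are positive in the band because \( \beta_2-\beta_1+n>0 \) and \( \beta_1-\beta_2+n+1>0 \) there, this yields \( a_n>0 \).
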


The type II polynomials are \cite[\S 4.3]{Arvesu}
\begin{align*}
 M_{(n_1,n_2)}=\left(\dfrac{c}{c-1}\right)^{n_1+n_2}(\beta_1)_{n_1}(\beta_2)_{n_2} \KF{1:1;2}{1:0;1}{(-x):(-n_1);(-n_2,\beta_1+n_1)}{(\beta_1):--;(\beta_2)}{\dfrac{c-1}{c},\dfrac{c-1}{c}}.
\end{align*}
The type I polynomials are \cite[\S 5]{HahnI}
\begin{multline*}
 M_{(n_1,n_2),i}=\dfrac{(1-c)^{\beta_i+n_1+n_2+n_i-2}}{c^{n_1+n_2-1}} \dfrac{(-1)^{n_i-1}(n_1+n_2-2)!}{(n_1-1)!(n_2-1)!}\dfrac{1}{(\beta_i-\hat{\beta}_i-\hat{n}_i+1)_{n_1+n_2-1}}\\
 \times
 \KF{1:2;1}{1:1;0}{(-n_a+1):(-x,\,\beta_a-\hat{\beta}_a-\hat{n}_a+1);(\hat{\beta}_a-\beta_a-n_a+1)}{(-n_1-n_2+2):(\beta_a);--}{1,\dfrac{c}{c-1}},
\end{multline*}
for \( i =1,2\) with \( \hat {\beta}_i\coloneq \beta_1\delta_{i,2}+\beta_2\delta_{i,1}\) and \( \hat {n}_i\coloneq n_1\delta_{i,2}+n_2\delta_{i,1}=n_1+n_2-n_i \).

\subsection{Multiple Laguerre of the first kind finite Markov chains}
The multiple Laguerre of the first kind type II and I polynomials, denoted as \( L_{(n_1,n_2)}=L_{(n_1,n_2)}(x;\alpha_1,\alpha_2)\) and \( L_{(n_1,n_2),i}=L_{(n_1,n_2),i}(x;\alpha_1,\alpha_2) \), where \( i \in\{1,2\} \), respectively, satisfy respective continuous orthogonality relations of the form:
\begin{align*}
 \int_{0}^{\infty}x^jL_{(n_1,n_2)}(x)w_i(x)\d\mu(x)&=0, 
 & j&\in \{0,\ldots,n_i-1\} , & i&\in\{1,2\} ,
\end{align*}
 and 
\begin{align*}
 \int_{0}^{\infty}x^j\left(L_{(n_1,n_2),1}(x)w_1(x)+L_{(n_1,n_2),2}(x)w_2(x)\right)
 \d\mu(x)&=0
 & j&\in\{0,\ldots,n_1+n_2-2\} ,
\end{align*}
with respect to the weight functions and measure
\( w_i(x;\alpha_i)=\Exp{-x}x^{\alpha_i} \), \( i\in \{1,2\}\) and \( \d\mu(x)=\d x \). 
The support is \( [0,\infty)\) with \( \alpha_1,\alpha_2>-1\) and, in order to have an AT system, \( \alpha_1-\alpha_2\not\in\mathbb Z \). 

The recurrence coefficients, see \eqref{recurrenciaII} and \eqref{recurrenciaI}, are \cite[section 3.2]{Clasicos}
\begin{align*}
 b_{2m}(\alpha_1,\alpha_2)&=3m+1+\alpha_1,&
 b_{2m+1}(\alpha_1,\alpha_2)&=3m+2+\alpha_2,\\
 c_{2m}(\alpha_1,\alpha_2)&=m(3m+\alpha_1+\alpha_2),&
 c_{2m+1}(\alpha_1,\alpha_2)&=3m^2+m(\alpha_1+\alpha_2+3)+\alpha_1+1,\\
 d_{2m}(\alpha_1,\alpha_2)&=m(m+\alpha_1)(m+\alpha_1-\alpha_2),&
 d_{2m+1}(\alpha_1,\alpha_2)&=m(m+\alpha_2)(m+\alpha_2-\alpha_1),
\end{align*}
which are all positive if \( - 1<\alpha_1-\alpha_2<1 \). 

In \cite{Bidiagonal_factorization}, we proved the following result:
\begin{pro}
 For the PBF coefficients \( a_n(\alpha_1,\alpha_2)\) we find
 \begin{align*}
 a_{6n+1}&=\alpha_1+1+n,&
 a_{6n+4}&=\alpha_2+1+n,\\
 a_{6n+2}&=\alpha_2-\alpha_1+n,&
 a_{6n+5}&=n+1,\\
 a_{6n+3}&=\alpha_1-\alpha_2+n+1,&
 a_{6n+6}&=n+1.
\end{align*}
These ones are all positive whenever \( - 1<\alpha_1-\alpha_2<0 \). 
\end{pro}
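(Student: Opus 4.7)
The plan is to verify the claimed factorization by direct matrix multiplication. Substituting the proposed values of the $a_n$ into the product $L_{1,m}L_{2,m}U_m$ yields a tetradiagonal matrix whose four diagonals can be matched against the Laguerre recurrence coefficients $b_n$, $c_n$, $d_n$ listed just above. Because every $a_n$ is a simple linear function of $n$, the calculation is short and mechanical. An alternative, computation-free route would be to take the confluent limit $\beta\to\infty$ (combined with the rescaling $x\mapsto x/\beta$) of the Jacobi--Piñeiro PBF already recorded, under which each of the six residue classes of the Jacobi--Piñeiro coefficients collapses to the stated Laguerre expression; I mention this as a cross-check.

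Concretely, I would first form the tridiagonal intermediate $M\coloneq L_{2,m}U_m$, whose nonzero entries are $M_{i,i-1}=a_{3i-3}a_{3i-5}$, $M_{i,i}=a_{3i-3}+a_{3i-2}$ (with the convention $a_k=0$ for $k\leq 0$), and $M_{i,i+1}=1$. A second multiplication then gives
\begin{align*}
 (L_{1,m}M)_{i,i+1} &= 1, \\
 (L_{1,m}M)_{i,i} &= a_{3i-4}+a_{3i-3}+a_{3i-2}, \\
 (L_{1,m}M)_{i,i-1} &= a_{3i-4}\bigl(a_{3i-6}+a_{3i-5}\bigr)+a_{3i-3}\,a_{3i-5}, \\
 (L_{1,m}M)_{i,i-2} &= a_{3i-4}\,a_{3i-6}\,a_{3i-8}.
\end{align*}
Setting $i=n+1$ and splitting into the parities $n=2k$ and $n=2k+1$ reduces each of these to a short polynomial in $k,\alpha_1,\alpha_2$ that matches the recurrence coefficients $b_n,c_n,d_n$ term by term; e.g.\ for $n=2k$ one finds immediately $a_{6k-1}+a_{6k}+a_{6k+1}=k+k+(\alpha_1+k+1)=3k+1+\alpha_1=b_{2k}$, and the corresponding checks for $c_n$ and $d_n$ telescope similarly. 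The boundary coefficients $b_0,c_1,d_2$ are recovered from the vanishing of $a_{-1},a_0$.

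The positivity claim is elementary. Under the hypotheses $\alpha_1,\alpha_2>-1$ and $-1<\alpha_1-\alpha_2<0$, the coefficients $a_{6n+1}=\alpha_1+n+1$ and $a_{6n+4}=\alpha_2+n+1$ are strictly positive; the strict inequality $\alpha_2>\alpha_1$ ensures $a_{6n+2}=(\alpha_2-\alpha_1)+n>0$ already for $n=0$; the strict inequality $\alpha_1-\alpha_2>-1$ ensures $a_{6n+3}=(\alpha_1-\alpha_2+1)+n>0$; and $a_{6n+5}=a_{6n+6}=n+1>0$ is manifest. The only real point to watch is the consistent index offset across the three bidiagonal factors, which is a bookkeeping matter rather than a genuine obstacle.
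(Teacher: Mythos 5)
Your verification is correct, and I checked the arithmetic it relies on: with $(L_{1,m})_{i,i-1}=a_{3i-4}$, $(L_{2,m})_{i,i-1}=a_{3i-3}$, $(U_m)_{i,i}=a_{3i-2}$, your formulas for the entries of $L_{1,m}L_{2,m}U_m$ are right, and substituting the claimed values gives, for instance, $b_{2k}=a_{6k-1}+a_{6k}+a_{6k+1}=3k+1+\alpha_1$, $c_{2k}=k(\alpha_1+2k)+k(\alpha_2+k)=k(3k+\alpha_1+\alpha_2)$, $c_{2k+1}=3k^2+k(\alpha_1+\alpha_2+3)+\alpha_1+1$, $d_{2k}=k(\alpha_1+k)(\alpha_1-\alpha_2+k)$ and $d_{2k+1}=k(\alpha_2+k)(\alpha_2-\alpha_1+k)$, in agreement with the recurrence coefficients listed in the text; the conventions $a_{-2}=a_{-1}=a_0=0$ handle the boundary entries, and the positivity discussion uses exactly the two strict inequalities $\alpha_2>\alpha_1$ and $\alpha_1-\alpha_2>-1$. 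Note, however, that the paper gives no internal proof of this proposition: it is quoted from the reference on the bidiagonal factorization of the multiple Hahn recurrence matrix, where the coefficients are \emph{derived} (the Laguerre and Jacobi--Piñeiro cases arising as specializations or limits of the Hahn formulas, in the spirit of the confluence $\beta\to\infty$ with $x\mapsto x/\beta$ that you mention as a cross-check). So your route is genuinely different and more elementary: a posteriori verification by multiplying out the three bidiagonal factors, which proves existence of a PBF with these coefficients but does not explain where they come from nor address uniqueness of the $L_{1}L_{2}U$ factorization (the statement, as phrased, does not require either). For the purpose of establishing the proposition as stated, your argument is complete once the parity-by-parity checks are written out, and the limit from Jacobi--Piñeiro is a sound independent confirmation since each of the six residue classes there indeed scales to the stated linear expressions.
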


The type II polynomials are \cite[\S 3.2]{ContinuosII}
\begin{align*}
 L_{(n_1,n_2)} =(-1)^{n_1+n_2}(\alpha_1+1)_{n_1}(\alpha_2+1)_{n_2} \KF{0:2;1}{1:1;0}{--:(-n_2,\alpha_1+n_1+1);(-n_1)}{(\alpha_1+1):(\alpha_2+1);--}{x,x}.
\end{align*}
The type I polynomials are \cite[\S 7]{HahnI}
\begin{align*}
 L_{(n_1,n_2),i}
 =(-1)^{n_1+n_2-1}\dfrac{1}{(n_i-1)!\Gamma(\alpha_i+1)(\hat{\alpha}_i-\alpha_i)_{\hat{n}_i}}
 \pFq{2}{2}{-n_i+1,\alpha_i-\hat{\alpha}_i-\hat{n}_i+1}{\alpha_i+1,\alpha_i-\hat{\alpha}_i+1}{x},
\end{align*}
for \( i =1,2\) with \( \hat {\alpha}_i\coloneq \alpha_1\delta_{i,2}+\alpha_2\delta_{i,1}\) and \( \hat {n}_i\coloneq n_1\delta_{i,2}+n_2\delta_{i,1}=n_1+n_2-n_i\).

\section*{Conclusions and outlook}

In the 1950s, significant advancements were made in understanding the connections between orthogonal polynomials and stochastic processes. Influential papers by Kendall, Ledermman, Reuter, Karlin, and McGregor focused on the spectral representation of probabilities in birth and death processes. These contributions paved the way for integral representations of probabilistic quantities of countable Markov chains using orthogonal polynomials.

Building upon previous works for countable (infinite) Markov chains, we presented a general construction for finite Markov chains that applies to families of orthogonal and multiple orthogonal polynomials in the Askey scheme, provided they have nonnegative recursion matrices. By applying explicit hypergeometric expressions, we were able to find numerical examples of finite Markov chains associated with these families. For that aim we have provided corresponding Mathematica codes.



Looking ahead, the paper opens up 
future research in the realm of Markov chains and orthogonal polynomials. One 
direction involves 
Markov chains generated by multiple orthogonal polynomials in the stepline with more than two weights (i.e., \( p>2\)), or even more to mixed multiple orthogonal polynomials, expanding beyond the scope of this current work. However, achieving this objective will require explicit hypergeometric expressions for the corresponding polynomials and the recursion matrix (see for instance~\cite{HahnI_nuevo}).
Moreover, the exploration of possible permutations of the pure birth/pure death stochastic factorization 
by studying Darboux transformations of the spectral measures resulting from such permutations, we can gain a deeper insight into the underlying dynamics of the corresponding Markov chains. 


\section*{Acknowledgments}
The authors are really grateful to the anonymous referees for their careful revision of
the manuscript. Their valuable comments and suggestions have contributed to improve
this work.

AB
acknowledges Centre for Mathematics of the University of Coimbra 
(funded by the Portuguese Government through FCT/MCTES, doi: 10.54499/UIDB/00324/2020).

JEFD \& AF acknowledge the CIDMA Center for Research and Development in Mathematics and Applications (University of Aveiro) and the Portuguese Foundation for Science and Technology (FCT) for their support within projects
doi: 10.54499/UIDB/04106/2020
 \&
doi: 10.54499/UIDP/04106/2020.

JEFD 
\& 
MM
acknowledge
PID2021-122154NB-I00, entitled ``Ortogonalidad y Aproximación con Aplicaciones en Machine Learning y Teoría de la Probabilidad,'' funded by
\href{https://doi.org/10.13039/501100011033}{MICIU/AEI/10.13039 /501100011033} and by "ERDF A Way of making Europe.” 
Additionally,
JEFD acknowledges the PhD contract doi: 
\linebreak
10.54499/UI/BD/152576/2022 from FCT Portugal.

\section*{Declarations}

\begin{enumerate}[\rm i)]
 \item \textbf{Conflict of interest:} The authors declare no conflict of interest.
 \item \textbf{Ethical approval:} Not applicable.
 \item \textbf{Contributions:} All the authors have contribute equally.
 \item \textbf{Generative AI and AI-assisted technologies in the writing process:}
 During the preparation of this work the authors used ChatGPT in order to improve English grammar, syntax, spelling and wording. After using this tool/service, the authors reviewed and edited the content as needed and take full responsibility for the content of the publication.
 \item \textbf{Data availability:} This paper has associated data. There are two Mathematica notebooks: 
 \begin{enumerate}[\(\bullet\)]
 \item 
 \hyperref{https://notebookarchive.org/finite-markov-chains-and-orthogonal-polynomials--2023-07-cf7w96a/}{}{}{ \texttt{MarkovChains\&OrthogonalPolynomials.nb}} 
 \item \hyperref{https://notebookarchive.org/finite-markov-chains-and-multiple-orthogonal-polynomials--2023-07-cf82tof/}{}{}{ \texttt{MarkovChains\&MultipleOrthogonalPolyomials.nb}} 
 \end{enumerate}
 These notebooks have been uploaded to the \hyperref{https://notebookarchive.org}{}{}{Mathematica Notebook Archive} and to \hyperref{https://github.com/ManuelManas/Markov-chains-and-orthogonal-polynomials/tree/main}{}{}{GitHub}.
\end{enumerate}

\end{document}